\let\proof\relax
\newcommand{\rounddown}[1]{\left\lfloor#1\right\rfloor}
\newcommand{\roundup}[1]{\left\lceil#1\right\rceil}
\newcommand{\R}{\mathbb{R}}
\newcommand{\Z}{\mathbb{Z}}
\DeclareMathOperator{\conv}{conv}
\DeclareMathOperator{\cone}{cone}
\begin{document}
%

%
%

\title{Approximately Packing Dijoins via Nowhere-Zero Flows}
%
%
\author{Gérard Cornuéjols \and
Siyue Liu \and
R. Ravi}

%
\authorrunning{Gérard Cornuéjols et al.}
%
\institute{Carnegie Mellon University, Pittsburgh, USA\\
\email{\{gc0v,siyueliu,ravi\}@andrew.cmu.edu}}
\maketitle              

\begin{abstract}
In a digraph, a dicut is a cut where all the arcs cross in one direction. A dijoin is a subset of arcs that intersects each dicut. Woodall conjectured in 1976 that in every digraph, the minimum size of a dicut equals to the maximum number of disjoint dijoins. However, prior to our work, it was not even known whether at least $3$ disjoint dijoins exist in an arbitrary digraph whose minimum dicut size is sufficiently large. By building connections with nowhere-zero (circular) $k$-flows, we prove that every digraph with minimum dicut size $\tau$ contains $\rounddown{\frac{\tau}{k}}$ disjoint dijoins if the underlying undirected graph admits a nowhere-zero (circular) $k$-flow. The existence of nowhere-zero $6$-flows in $2$-edge-connected graphs (Seymour 1981) directly leads to the existence of $\rounddown{\frac{\tau}{6}}$ disjoint dijoins in a digraph with minimum dicut size $\tau$, which can be found in polynomial time as well. The existence of nowhere-zero circular $\frac{2p+1}{p}$-flows in $6p$-edge-connected graphs (Lov\'asz et al. 2013) directly leads to the existence of $\rounddown{\frac{\tau p}{2p+1}}$ disjoint dijoins in a digraph with minimum dicut size $\tau$ whose underlying undirected graph is $6p$-edge-connected.
We also discuss reformulations of Woodall's conjecture into packing strongly connected orientations.
\keywords{Woodall's conjecture  \and Nowhere-zero flow \and Approximation algorithm.}
\end{abstract}

\section{Introduction}
Given a digraph $D=(V,A)$ and a subset $U$ of its vertices with $U\neq\emptyset , V$, denote by $\delta_D^+(U)$ and $\delta_D^-(U)$ the arcs leaving and entering $U$, respectively. The cut induced by $U$ is $\delta_D(U):=\delta_D^+(U)\cup \delta_D^-(U)$. We omit the subscript $D$ if the context is clear. 
A \textit{dicut} is an arc subset of the form $\delta^+(U)$ such that $\delta^-(U)=\emptyset$. A \textit{dijoin} is an arc subset $J\subseteq A$ that intersects every dicut at least once. 
More generally, we will also work with the notion of a \textit{$\tau$-dijoin}, which is a subset $J\subseteq A$ that intersects every dicut at least $\tau$ times. If $D$ is a weighted digraph with arc weights $w:A\rightarrow \mathbb{Z}_+$, we say that $D$ with weight $w$ can \textit{pack} $k$ dijoins if there exist $k$ dijoins $J_1,...,J_k$ such that no arc $e$ is contained in more than $w(e)$ of these $k$ dijoins.
In this case, we say that $J_1,...,J_k$ is a \textit{packing} of $D$ under weight $w$. The number $k$ is the \textit{value} of the packing. If every arc $e$ is contained in exactly $w(e)$ of these $k$ dijoins, we say that $D$ with weight $w$ can be 
 \emph{decomposed} into $k$ dijoins. When the digraph is \textit{unweighted}, i.e., $w(e)=1$ for every $e\in A$, $D$ with weight $w$ can pack $k$ dijoins if and only if $D$ contains $k$ (arc-)disjoint dijoins.
 
 More generally, we say that $D$ with weight $w$ can \emph{pack} $k$ digraphs with weight $w^1, \ldots, w^k \in \mathbb{Z}^A_+$ if $w^1 + \ldots + w^k\leq w$. We say that $D$ with weight $w$ can be \emph{decomposed} into $k$ digraphs with weight $w^1, \ldots , w^k$ if  $w^1 + \ldots + w^k=w$.
Edmonds and Giles \cite{edmonds1977min} conjectured the following.
\begin{conjecture}[Edmonds-Giles]\label{conj:E-G}
    Let $D=(V,A)$ be a digraph with arc weights $w\in\{0,1\}^A$. If the minimum weight of a dicut is $\tau$, then $D$ can pack $\tau$ dijoins.
\end{conjecture}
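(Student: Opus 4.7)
The plan is to approach the $\{0,1\}$-weighted Edmonds-Giles conjecture by reducing it to the unweighted Woodall conjecture on an auxiliary digraph obtained by neutralizing weight-$0$ arcs, and then invoking the nowhere-zero flow machinery that the abstract previews. Let $A_0 = \{e : w(e) = 0\}$ and $A_1 = \{e : w(e) = 1\}$, and set $D_1 = (V, A_1)$. Because weight-$0$ arcs may appear in at most $0$ of the dijoins, any feasible packing is a collection of edge-disjoint subsets of $A_1$, each of which must hit every dicut of $D$ (not just every dicut of $D_1$). So the $\{0,1\}$-weighted statement is strictly stronger than Woodall's: weight-$0$ arcs are useless for construction but can still eliminate dicuts by entering the defining vertex set.

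The first observation is that dicuts of $D$ form a subset of dicuts of $D_1$, since deleting arcs can only enlarge the dicut family. Hence any dijoin of $D_1$ is automatically a dijoin of $D$, and if the minimum dicut of $D_1$ happened to have size $\tau$, Woodall's conjecture on $D_1$ would finish the proof. The hard case is when $D_1$ contains \emph{phantom} dicuts $\delta^+_{D_1}(U)$ of size less than $\tau$, blocked from being dicuts of $D$ only by weight-$0$ arcs entering $U$. To kill phantoms I would contract each weight-$0$ arc $(v,u)$ that enters some phantom (so $u \in U$, $v \notin U$): contracting fuses the endpoints and destroys that phantom, while every genuine dicut of $D$ has no weight-$0$ arc entering, so its defining subset survives the contraction. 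Iterating should yield a quotient digraph $D^\ast$ whose dicuts are in bijection with dicuts of $D$ with the minimum size still $\tau$, and Woodall on $D^\ast$ would then give $\tau$ disjoint dijoins that lift back to the required packing in $D$.

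The first obstacle is that iterative contraction can spawn new phantom dicuts, because merging vertices rewires in-neighborhoods in ways that may produce fresh $U$ with $\delta_{D_1^\ast}^-(U) = \emptyset$ but $\delta_{D^\ast}^-(U) \neq \emptyset$. Establishing termination and correctness would demand a monotone potential---perhaps counting ordered vertex pairs not yet identified by weight-$0$ reachability or the total number of weight-$0$ arcs remaining---together with a proof that each contraction strictly decreases it without breaking the invariant that dicut sizes of $D^\ast$ stay at $\tau$. I expect the delicate case analysis here to be the main combinatorial difficulty.

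A second, deeper obstacle is that invoking Woodall would be circular for a standalone proof. Substituting the paper's nowhere-zero flow construction (Seymour's $6$-flow theorem through $D^\ast$'s underlying undirected graph) produces only $\lfloor \tau/6 \rfloor$ disjoint dijoins rather than $\tau$, losing exactly the factor-$6$ integrality gap announced in the abstract. Closing this gap is what has kept both Edmonds-Giles and Woodall open for decades, and I would expect a full proof to require a genuinely new integrality argument tailored to the $\{0,1\}$ setting---possibly a direct augmenting-path or matroidal construction on the quotient $D^\ast$---rather than any refinement of the approximation techniques introduced in the present paper.
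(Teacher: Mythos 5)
The statement you set out to prove is not a result of the paper at all: it is stated as a conjecture, and the paper notes immediately afterwards that it was \emph{disproved} by Schrijver in 1980. Schrijver exhibited a digraph with $\{0,1\}$ arc weights whose minimum dicut weight is $\tau=2$ but which cannot pack $2$ dijoins; the paper recalls this counterexample (in translated form) in Figure~\ref{fig:strongly_connected_orientation}, where the corresponding $2$-SCO cannot be decomposed into two SCO's, and Theorem~\ref{thm:eq_E-G} makes the equivalence with Conjecture~\ref{conj:E-G} precise. So there is no proof to compare against, and no proof strategy can succeed: the conclusion you are aiming for is simply false. Your own framing already contains the warning sign --- you observe that the weighted statement is ``strictly stronger'' than Woodall's Conjecture~\ref{conj:Woodall}; indeed it is strictly stronger in the strongest possible sense, since Woodall's conjecture remains open and plausible while the weighted version has an explicit counterexample.

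Concretely, the step that cannot be repaired is the contraction of weight-$0$ arcs. The weight-$0$ arcs are exactly the source of falsity: they can eliminate dicuts while contributing nothing to any packing, and Schrijver's example exploits precisely this asymmetry. If your claimed quotient construction really produced a digraph $D^\ast$ whose dicuts are in bijection with those of $D$, with minimum dicut size still $\tau$ and with packings lifting back, then Woodall's conjecture (if true) would imply Conjecture~\ref{conj:E-G}, contradicting Schrijver's counterexample; so either the bijection/invariant fails (new ``phantom'' or genuinely smaller dicuts appear after contraction, which you yourself flag as the delicate point) or the lifting fails. Running your procedure on Schrijver's instance would exhibit the breakdown explicitly. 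The honest conclusion is not that a ``genuinely new integrality argument'' is needed for the $\{0,1\}$ case, but that none can exist; the open question is only the unweighted Woodall conjecture and its approximate versions, which is what the paper actually proves results about.
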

We can assume without loss of generality that $w\in\{0,1\}^A$ because we can always replace an arc $e$ with integer weight $w(e)>1$ by $w(e)$ parallel arcs of weight $1$. Note that the weight $0$ arcs cannot be removed because they, together with the weight $1$ arcs, determine the dicuts.
The above conjecture was disproved by Schrijver \cite{schrijver1980counterexample}. However,
the following unweighted version of the Edmonds-Giles conjecture, proposed by Woodall \cite{woodall1978menger},  is still open.
\begin{conjecture}[Woodall]\label{conj:Woodall}
    In every digraph, the minimum size of a dicut equals the maximum number of disjoint dijoins.
\end{conjecture}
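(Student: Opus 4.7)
The plan is to push the nowhere-zero flow framework introduced in this paper all the way to an optimal bound. The paper establishes that a nowhere-zero $k$-flow on the underlying undirected graph yields $\rounddown{\tau/k}$ disjoint dijoins, so in order to recover the full conjecture along these lines one would need, informally, a ``nowhere-zero $1$-flow'' tailored to the directed setting---an object that partitions the arc set into exactly $\tau$ dijoins. My first attempt would be to replace the undirected flow by a more refined directed object: a nowhere-zero $\mathbb{Z}_\tau$-flow on an auxiliary graph obtained by contracting each minimum dicut into a single arc class, so that the level sets of the flow become directed transversals of every dicut simultaneously.

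Concretely, I would proceed in three steps. First, fix a digraph $D$ with minimum dicut size $\tau$, invoke the Lucchesi--Younger theorem to obtain a fractional dijoin packing of value $\tau$, and try to integerize it by an uncrossing/discharging argument that exploits the lattice of tight dicuts (every minimum dicut is covered by some minimum dijoin, and these dijoins enjoy a sub- and supermodular structure). Second, imitate Seymour's decomposition of a $6$-flow as a sum of a $2$-flow and a $3$-flow, but track dicut coverage rather than undirected cycle coverage; the goal is to split $D$ into two sub-digraphs each of which admits an integer dijoin packing of the right total value. Third, recombine the two packings along the shared arcs using an augmenting-path-style exchange, so that no arc is used more than once.

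The main obstacle is that Woodall's conjecture fails in its natural weighted generalization, as shown by Schrijver's counterexample to Edmonds--Giles, so any proof must use the unweighted hypothesis in an essential way. This rules out purely polyhedral or LP-rounding approaches that do not distinguish unit weights from arbitrary nonnegative integer weights, and it also caps the nowhere-zero flow reduction: even a resolution of Tutte's $3$-flow conjecture would, via this paper's theorem, deliver only $\rounddown{\tau/3}$ dijoins. Bridging the remaining factor-$k$ gap will likely require a structural theorem about minimum counterexamples---say, that they must be $k$-arc-strong for small $k$, contain no contractible tight dicut, and have no low-degree vertex---followed by an ad hoc flow-based argument for the residual cases. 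I would therefore spend most of the effort on the structural reduction phase, using the strongly connected orientation reformulation alluded to at the end of the abstract as the principal inductive handle, since packing $\tau$ strongly connected orientations is literally equivalent to packing $\tau$ dijoins and seems more amenable to ear-decomposition techniques than the dijoin formulation itself.
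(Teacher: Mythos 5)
There is a fundamental mismatch here: the statement you are proving is Woodall's conjecture itself, which this paper does \emph{not} prove and explicitly treats as open. The paper's contribution is only the approximate results ($\rounddown{\frac{\tau}{6}}$ disjoint dijoins in general, $\rounddown{\frac{\tau p}{2p+1}}$ under $6p$-edge-connectivity, via Theorem~\ref{thm:pack_dijoins}) and the reformulations in Theorems~\ref{thm:eq_E-G} and~\ref{thm:eq_Woodall}. Your text is a research programme, not a proof: each of its three steps is asserted, not carried out, and each is essentially as hard as the conjecture. In your first step, a fractional dijoin packing of value $\tau$ does exist (by blocking duality from Lucchesi--Younger), but the ``uncrossing/discharging argument'' that would integerize it is exactly the missing content; no such argument is known, and the tight-dicut lattice alone cannot suffice, since the same polyhedral structure is present in the weighted setting where the analogous statement (Edmonds--Giles) is false by Schrijver's example. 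Your second step --- splitting $D$ into two sub-digraphs each admitting an integer packing of the right value --- runs into precisely the obstruction the paper flags after Proposition~\ref{prop:decompose_1/k_dijoin}: deleting arcs creates new dicuts of uncontrolled size, so the parts need not have large minimum dicut; the paper's escape (decomposing the augmented digraph $\vec{G}$ with weight $w^D$ into two $\rounddown{\frac{\tau}{k}}$-SCD's via a $k$-cut-balanced orientation, Theorem~\ref{thm:decompose_SCD_weightD}) is what caps the method at $\rounddown{\frac{\tau}{k}}$ with $k\geq 2$, and you offer no replacement. Your third step, an ``augmenting-path-style exchange'' to make two dijoin packings arc-disjoint, is not defined and has no known analogue for dijoins.

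Two further specific inaccuracies: packing $\tau$ strongly connected orientations is \emph{not} ``literally equivalent'' to Woodall's conjecture in the unrestricted form you invoke --- decomposing arbitrary $\tau$-SCO's is equivalent to the (false) Edmonds--Giles conjecture (Theorem~\ref{thm:eq_E-G}); the correct equivalence (Theorem~\ref{thm:eq_Woodall}) requires the nowhere-zero condition, so any ear-decomposition induction would have to use that condition essentially, which your sketch does not. Also, the proposed ``nowhere-zero $\mathbb{Z}_\tau$-flow on an auxiliary graph obtained by contracting each minimum dicut'' is not a well-defined object, and there is no argument that its level sets meet \emph{every} dicut (not just the minimum ones). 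In short, the proposal correctly assembles background facts and known obstacles but supplies no argument that closes the gap; the conjecture remains unproven both in the paper and in your write-up.
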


Several weakenings of Woodall's conjecture have been made in the literature. It has been conjectured that there exists some integer $\tau\geq 3$ such that every digraph with minimum dicut size at least $\tau$ contains $3$ disjoint dijoins \cite{openproblemtau3}. Shepherd and Vetta \cite{shepherd2005visualizing} raised the following question. Let $f(\tau)$ be the maximum value such that every weighted digraph whose dicuts all have weight at least $\tau$, can pack $f(\tau)$ dijoins. They conjectured that $f(\tau)$ is of order $\Omega(\tau)$. In this paper, which is the full version of the extended abstract \cite{CLR}, we give an affirmative answer to this conjecture in the \textit{unweighted} case. The main results are the following approximate versions of Woodall's conjecture.
\begin{theorem}\label{thm:approx1}
    Every digraph $D=(V,A)$ with minimum dicut size $\tau$ contains $\rounddown{\frac{\tau}{6}}$ disjoint dijoins, and such dijoins can be found in polynomial time.
\end{theorem}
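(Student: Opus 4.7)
The plan is to invoke Seymour's theorem on nowhere-zero 6-flows and construct the dijoins from the resulting flow. First I would dispose of the trivial case $\tau \leq 5$, where $\rounddown{\tau/6} = 0$, so I may assume $\tau \geq 6$. In that case the underlying undirected graph $G$ of $D$ must be 2-edge-connected: a bridge in $G$ would yield a cut of size one, and since every arc of that cut necessarily points the same way in $D$, it would be a dicut of size one, contradicting $\tau \geq 6$.

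Having established 2-edge-connectivity, I would apply Seymour's theorem to obtain a nowhere-zero 6-flow on $G$, constructible in polynomial time. With respect to the orientation of $D$, I would view it as a signed integer function $f \colon A \to \{\pm 1, \pm 2, \ldots, \pm 5\}$ satisfying flow conservation at every vertex. The crucial consequence is that for every dicut $C = \delta^+(U)$ (so $\delta^-(U) = \emptyset$), flow conservation summed over $U$ yields $\sum_{e \in C} f(e) = 0$: each dicut has zero signed flow sum while every arc of $D$ carries a value of magnitude between $1$ and $5$.

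The core step is to convert $f$ into $\rounddown{\tau/6}$ arc-disjoint dijoins. I would prove the more general lemma advertised in the abstract — that a nowhere-zero $k$-flow on the underlying graph yields $\rounddown{\tau/k}$ arc-disjoint dijoins in $D$ — and specialize it to $k = 6$. The construction I have in mind re-orients $D$ so that $f$ becomes a positive integer flow, decomposes this flow into a family of directed cycles in that orientation, and then organizes the cycles (together with the arcs of $D$ they use) into $\rounddown{\tau/k}$ arc-disjoint classes, each of which is forced to meet every dicut of $D$.

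The main obstacle is verifying that each class really is a dijoin. For any dicut $C$ of size $\tau$, the zero-sum identity $\sum_{e \in C} f(e) = 0$ together with the bound $|f(e)| \leq k-1$ forces $|C|$ to split between positive- and negative-flow arcs in a balanced way, and I anticipate this is exactly what allows the arcs of $C$ to be distributed across all $\rounddown{\tau/k}$ classes without any class missing $C$. Making this precise — via a careful cycle-by-cycle assignment argument, possibly buttressed by a matroid or polyhedral statement about dijoins — is the heart of the proof and is where the denominator $k$ (and hence the factor $6$ in Theorem~\ref{thm:approx1}) enters. Polynomial running time then follows from the constructive form of Seymour's theorem and the polynomial-time nature of integer flow decomposition and the ensuing assignment.
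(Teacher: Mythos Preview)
Your setup is correct through the point where you observe that every dicut $C$ has $\sum_{e\in C} f(e)=0$ with $1\le |f(e)|\le 5$; this indeed forces each dicut to split into a ``positive'' part and a ``negative'' part, each of size at least $|C|/k\ge\tau/k$. That observation is in the paper as Proposition~\ref{prop:decompose_1/k_dijoin}: it yields \emph{two} disjoint $\lfloor\tau/k\rfloor$-dijoins, namely $J=A\cap E^+$ and $A\setminus J$. But that is only two pieces, not $\lfloor\tau/k\rfloor$ pieces, and the jump from the former to the latter is precisely where your proposal becomes a sketch rather than a proof.

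The concrete obstruction to your cycle-grouping plan is that the cycles in a flow decomposition are not arc-disjoint: an arc $e$ lies in $|f(e)|$ of them, with $|f(e)|$ possibly as large as $k-1$. So ``organizing the cycles into $\lfloor\tau/k\rfloor$ classes'' does not produce arc-disjoint arc sets, and if instead you try to distribute \emph{arcs} into classes, you lose the cycle structure that was supposed to certify the dijoin property. More fundamentally, even if you could peel off one dijoin, the residual digraph may acquire brand-new dicuts (cuts that were not dicuts in $D$ because some now-removed arc crossed them the other way), and these new dicuts can be arbitrarily small. The paper singles this out explicitly as the reason a direct inductive decomposition into dijoins fails.

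The paper circumvents this by changing the target object. It augments $D$ to $\vec G$ with weights $w^D$ ($\tau$ on original arcs, $1$ on their reverses), shows that packing dijoins in $D$ is equivalent to packing strongly connected subdigraphs in $(\vec G,w^D)$ (Proposition~\ref{prop:reformulation_SCD}), and then uses the $k$-cut-balanced orientation $E^+$ not to decompose a flow but to \emph{round} $w^D/2$ to an integer vector $x$ (round up on $E^+$, down on $E^-$). Both $x$ and $w^D-x$ are $\lfloor\tau/k\rfloor$-strongly-connected (Theorem~\ref{thm:decompose_SCD_weightD}); Edmonds' disjoint-arborescences theorem then extracts $\lfloor\tau/k\rfloor$ out-$r$-arborescences from one and $\lfloor\tau/k\rfloor$ in-$r$-arborescences from the other, and pairing them gives the required strongly connected subdigraphs and hence dijoins. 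None of the three ingredients --- the augmented weighted digraph, the rounding trick, or the arborescence pairing --- appears in your outline, and at least one nontrivial idea of this sort is needed to close the gap you flagged as ``the heart of the proof.''
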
 
Given a digraph $D=(V,A)$, the \textit{underlying undirected graph} is the graph with vertex set $V$ and edge set obtained by replacing each arc $(u,v)\in A$ with an undirected edge $\{u,v\}$. To exclude the cases $\tau=0$ and $\tau=1$, when Woodall's conjecture holds trivially, we assume $\tau\geq 2$ throughout the paper, which implies that the underlying undirected graph is $2$-edge-connected. The following theorem shows that, as the connectivity of the underlying undirected graph increases, one can get a better approximation ratio.
\begin{theorem}\label{thm:approx2}
    Let $p$ be a positive integer. Every digraph $D=(V,A)$ with minimum dicut size $\tau$ and with the property that its underlying undirected graph is $6p$-edge-connected contains $\rounddown{\frac{\tau p}{2p+1}}$ disjoint dijoins.
\end{theorem}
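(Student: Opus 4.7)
The plan is to obtain Theorem~\ref{thm:approx2} as an immediate corollary of the same machinery used to prove Theorem~\ref{thm:approx1}, specialized to the circular flow number $r = (2p+1)/p$ in place of $r = 6$.

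First I would invoke the general reduction outlined in the abstract---call it the master lemma---that turns a nowhere-zero circular $r$-flow on the underlying undirected graph of $D$ into $\rounddown{\tau/r}$ arc-disjoint dijoins in $D$, where $\tau$ is the minimum dicut size of $D$. This reduction is presumably established earlier in the paper, since Theorem~\ref{thm:approx1} is exactly its instance $r = 6$ combined with Seymour's $6$-flow theorem for $2$-edge-connected graphs.

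Next, I would apply the Lov\'asz--Thomassen--Wu--Zhang theorem (2013) asserting that every $6p$-edge-connected undirected graph admits a nowhere-zero circular $(2p+1)/p$-flow. Since by hypothesis the underlying undirected graph of $D$ is $6p$-edge-connected, such a flow exists. Feeding it into the master lemma with $r = (2p+1)/p$ yields
\[
\rounddown{\frac{\tau}{(2p+1)/p}} \;=\; \rounddown{\frac{\tau p}{2p+1}}
\]
arc-disjoint dijoins in $D$, which is precisely the bound claimed. Note that no algorithmic claim is made in this theorem, so one does not need an efficient procedure extracting the flow---only its existence.

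The only nontrivial content is the master lemma, which is the technical heart of the paper rather than anything specific to Theorem~\ref{thm:approx2}. To prove it I would interpret a nowhere-zero circular $r$-flow ($r = k/d$) as an orientation $\sigma$ of the underlying undirected graph together with integer arc values in $\{d, d+1, \ldots, k-d\}$ satisfying conservation at each vertex. For any dicut $\delta_D^+(U)$ in $D$, flow conservation across $U$ combined with the fact that every arc of $\delta_D(U)$ is $D$-oriented out of $U$ forces a quantitative balance between the $\sigma$-forward and $\sigma$-backward arcs inside the dicut, weighted by the flow values. The hard part is to exploit this balance uniformly across \emph{all} dicuts---say, by a threshold decomposition of the flow, or by an averaging argument against the dijoin polyhedron---so as to extract exactly $\rounddown{\tau/r}$ dijoins each hitting every dicut at least once; matching the floor bound tightly, rather than losing a constant factor, is what I expect to be the main obstacle.
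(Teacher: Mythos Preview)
Your derivation of Theorem~\ref{thm:approx2} from the master lemma (Theorem~\ref{thm:pack_dijoins} in the paper) together with the Lov\'asz--Thomassen--Wu--Zhang circular $(2+\tfrac{1}{p})$-flow theorem is exactly how the paper proves it, including the remark that no algorithmic claim is needed here. Your speculative last paragraph on how to prove the master lemma, however, diverges from the paper's actual route: rather than a threshold decomposition of the flow or an averaging argument against the dijoin polyhedron, the paper (i) reformulates packing dijoins as packing strongly connected subdigraphs in an augmented weighted digraph $\vec{G}$, (ii) uses the $k$-cut-balanced orientation coming from the flow to round $\tfrac{w^D}{2}$ and thereby split $\vec{G}$ into two $\rounddown{\tau/k}$-strongly-connected pieces, and (iii) extracts $\rounddown{\tau/k}$ in- and out-$r$-arborescences from the two pieces via Edmonds' theorem and pairs them up.
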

A seminal theorem of Nash-Williams \cite{nash1964decomposition} states that every $2k$-edge-connected graph contains $k$ disjoint spanning trees. Since every spanning tree intersects every cut, in particular it intersects every dicut, and thus every spanning tree is a dijoin. As a consequence, if the underlying undirected graph is $6p$-edge-connected, it contains $3p$ disjoint dijoins. Theorem \ref{thm:approx2} improves upon this if $\tau\geq 6p+3$.


The two theorems stated above are consequences of the following main theorem that we prove in this paper.
\begin{theorem}\label{thm:pack_dijoins}
    For a digraph $D=(V,A)$ with minimum dicut size $\tau$, if the underlying undirected graph admits a nowhere-zero circular $k$-flow, where $k\geq 2$ is a rational number, then $D$ contains $\rounddown{\frac{\tau}{k}}$ disjoint dijoins.
\end{theorem}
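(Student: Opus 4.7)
The plan is to reformulate the nowhere-zero circular $k$-flow hypothesis as a balanced orientation statement via Hoffman's circulation theorem, and then invoke a classical packing theorem for crossing families. By the well-known equivalence (Hoffman's circulation theorem, extended to the circular case by Goddyn-Tarsi-Zhang), the underlying graph $G$ admits a nowhere-zero circular $k$-flow if and only if there is an orientation $D^*$ of $G$ such that
\[
\min\bigl(|\delta^+_{D^*}(U)|,\,|\delta^-_{D^*}(U)|\bigr) \;\geq\; \frac{|\delta_G(U)|}{k}
\]
for every $\emptyset \neq U \subsetneq V$. Fix such an orientation $D^*$, and let $A^{\neq} \subseteq A$ denote the set of arcs on which $D$ and $D^*$ disagree. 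For any dicut $\delta^+_D(U)$ of $D$, since $\delta^-_D(U) = \emptyset$, every edge of the cut $\delta_G(U)$ is oriented from $U$ to $V \setminus U$ in $D$, and the arcs that $D^*$ reorients are exactly those in $\delta^-_{D^*}(U)$. Hence $|A^{\neq} \cap \delta^+_D(U)| = |\delta^-_{D^*}(U)| \geq |\delta_G(U)|/k \geq \tau/k$, and since this quantity is an integer, it is at least $q := \rounddown{\tau/k}$.

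Now set $H := D^*$ and let $\mathcal{F} := \{U : \emptyset \neq U \subsetneq V,\ \delta^-_D(U) = \emptyset\}$ denote the family of dicut sets of $D$. A direct check shows $\mathcal{F}$ is closed under non-empty intersections and under unions that remain proper subsets of $V$, so $\mathcal{F}$ is a crossing family; by the preceding paragraph $|\delta^-_H(U)| \geq q$ for every $U \in \mathcal{F}$. Invoking the classical theorem of Frank (equivalently of Edmonds-Giles, Schrijver) on packing arc-disjoint coverings of a crossing family, $H$ contains $q$ pairwise arc-disjoint subsets $J_1, \ldots, J_q \subseteq A(H)$, each of which meets $\delta^-_H(U)$ for every $U \in \mathcal{F}$. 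Viewing each $J_i$ as a subset of arcs of $D$ (same underlying edges, possibly with reversed orientation), the fact that $J_i \cap \delta^-_H(U) \neq \emptyset$ translates, for any dicut set $U$, into $J_i$ containing an arc of $A^{\neq} \cap \delta^+_D(U) \subseteq \delta^+_D(U)$. Hence each $J_i$ is a dijoin of $D$, and arc-disjointness in $D$ follows from arc-disjointness in $H$.

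The main technical ingredient is the packing theorem for crossing families invoked above; this is a staple of polyhedral combinatorial optimization, reducing in essence to total-dual-integrality of a natural LP. I anticipate the main care will go into (i) verifying that $\mathcal{F}$ has exactly the crossing-family structure required by the theorem and (ii) citing the precise form of the packing theorem that matches our hypotheses; the balanced orientation coming from the flow, together with the direct translation between $D$ and $D^*$ via $A^{\neq}$, then make the argument concise.
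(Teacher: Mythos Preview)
Your first step---reinterpreting the circular $k$-flow as a $k$-cut-balanced orientation $D^*$ and observing that $A^{\neq}$ meets every dicut of $D$ in at least $q:=\lfloor\tau/k\rfloor$ arcs---is correct and is exactly the paper's Proposition~\ref{prop:decompose_1/k_dijoin}. The gap is in the second step: the ``classical theorem of Frank / Edmonds--Giles / Schrijver on packing arc-disjoint coverings of a crossing family'' that you invoke does not exist in the generality you need. Unwinding your construction, for $U\in\mathcal{F}$ one has $\delta^-_H(U)=A^{\neq}\cap\delta^+_D(U)$ (as edge sets), so finding $q$ arc-disjoint covers of $\{\delta^-_H(U):U\in\mathcal{F}\}$ is exactly the problem of packing $q$ dijoins in $D$ under the $\{0,1\}$ weight vector $\chi_{A^{\neq}}$. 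That is the Edmonds--Giles conjecture for this particular weight, which is not known; and if a general crossing-family packing theorem of the kind you describe were available, applying it with $H=D$ and $\mathcal{F}$ the full dicut family would prove Woodall's conjecture outright. The TDI results you allude to go the other way: Lucchesi--Younger gives that \emph{dicuts} pack (min dijoin $=$ max disjoint dicuts), not that dijoins do.

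The paper anticipates precisely this obstruction (see the paragraph following Proposition~\ref{prop:decompose_1/k_dijoin}): knowing that $A^{\neq}$ is a $q$-dijoin says nothing about \emph{new} dicuts that appear inside $A^{\neq}$ once the remaining arcs are deleted, so one cannot recurse. Its workaround is to pass to the augmented digraph $\vec G$ with weight $w^D$ and work with strongly connected subdigraphs instead of dijoins (Proposition~\ref{prop:reformulation_SCD}). The balanced orientation is then used, not to split $A$ into two $q$-dijoins, but to split $w^D$ into two $q$-strongly-connected weightings (Theorem~\ref{thm:decompose_SCD_weightD}); the only packing theorem invoked is Edmonds' disjoint arborescences theorem, applied separately to each half to extract $q$ out-arborescences and $q$ in-arborescences from a common root, which are then paired. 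In short, the genuine packing step in the paper is arborescence packing---a theorem that \emph{is} available---rather than dijoin packing, which is not.
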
 

The first ingredient of our approach to proving this result is reducing the problem of packing dijoins in a digraph to that of packing strongly connected digraphs. This reduction is not new and it was already explored by Shepherd and Vetta \cite{shepherd2005visualizing}. 
Augment the input digraph $D$ by adding reverse arcs for all input arcs, and assign weights $\tau$ to the original arcs and $1$ to the newly added reverse arcs. Denote the augmented digraph by $\vec{G}$ with weight $w^D$. Define a \textit{$\tau$-strongly-connected digraph ($\tau$-SCD)} to be a weighted digraph such that the arcs leaving every cut have weight at least $\tau$. A $1$-SCD is a \textit{strongly connected (sub)digraph (SCD)}. It is not hard to see that for a digraph $D$ with minimum dicut size $\tau$, the augmented digraph $\vec{G}$ with weight $w^D$ is $\tau$-strongly-connected. One can then show that packing $\tau' \leq \tau$ dijoins in the original digraph $D$ is equivalent to decomposing the augmented weighted digraph $\vec{G}$ into $\tau'$ strongly connected digraphs (Proposition~\ref{prop:reformulation_SCD}).

We then draw a connection to nowhere-zero flows. Let $G=(V,E)$ be an {\em undirected} graph and let $k \geq 2$ be an integer. Tutte \cite{tutte1954contribution} introduced the notion of a \textit{nowhere-zero $k$-flow}, which is an orientation $E^+$ of its edges, together with an 
assignment of integral flow values between $1$ and $k-1$ to $E^+$ such that the flow is conserved at every node. Goddyn et al. 
\cite{goddyn1998k} extended the definition to allowing $k$ to take fractional values. Let $p,q$ be two integers such that $0<p\leq q$. A \textit{nowhere-zero circular $(1+\frac{q}{p})$-flow} of $G$ is an orientation $E^+$ and an assignment of integral flow values between $p$ and $q$. When $p = 1, q=k-1$ we recover Tutte's notion. We say that an orientation $E^+$ induces a nowhere-zero (circular) $k$-flow if there is such an assignment of flow values to $E^+$. Jaeger \cite{jaeger1976balanced} observed that an orientation induces a nowhere-zero (circular) $k$-flow if and only if $E^+$ is \textit{$k$-cut-balanced}, i.e., $\frac{1}{k}|\delta_{G}(U)|\leq |\delta^+_{E^+}(U)|\leq \frac{k-1}{k}|\delta_{G}(U)|$, $\forall \emptyset \neq U\subsetneqq V$. There is a rich literature on the existence of nowhere-zero (circular) $k$-flows from which we will use two important results: 
Seymour \cite{seymour1981nowhere} showed that there is always a nowhere-zero $6$-flow in $2$-edge-connected graphs, and Younger \cite{younger1983integer} gave a polynomial time algorithm to construct a nowhere-zero $6$-flow in $2$-edge-connected graphs.
Lov\'asz et al. \cite{lovasz2013nowhere} proved that for any positive integer $p$, there always exists a nowhere-zero circular $(2+\frac{1}{p})$-flow in $6p$-edge-connected graphs.

Returning to dijoins and our augmented graph $\vec{G}$, we need to decompose this augmented graph into some $\tau' \leq \tau$ strongly connected digraphs. 
Decomposing a digraph into strongly connected digraphs is a notoriously hard problem. It is not known whether there exists a sufficiently large $\tau$ such that every $\tau$-strongly-connected digraph can be decomposed into $2$ disjoint strongly connected digraphs \cite{bang2004decomposing}. Finding a minimum subdigraph that is strongly connected is NP-hard as well \cite{eswaran1976augmentation}. A natural approach to obtain a small-size strongly connected spanning subdigraph is to find a pair of in and out $r$-arborescences from the same root $r$ and to take their union. However, even finding a pair of disjoint in and out arborescences in a given digraph is hard. For instance, it is still open whether there exists a sufficiently large integer $\tau$ such that an arbitrary $\tau$-strongly-connected digraph can pack one in-arborescence and one out-arborescence \cite{bang2009disjoint}.

To get around this difficulty, we reduce our goal to finding {\em two} disjoint subdigraphs of $\vec{G}$, each of which contains $\tau'$ disjoint in or out $r$-arborescences for some fixed root $r$.
The idea of pairing up in- and out-arborescences was already used successfully by Shepherd and Vetta \cite{shepherd2005visualizing} to find a half-integral packing of dijoins of value $\frac{\tau}{2}$. Here, we crucially argue (in Theorem~\ref{thm:decompose_SCD_weightD}) that if the underlying undirected graph of $D$ admits a nowhere-zero $k$-flow, then the digraph $\vec{G}$ with weight $w^D$ can be decomposed into two disjoint $\rounddown{\frac{\tau}{k}}$-SCD's. (Note that we do not prove this statement for any arbitrary $\tau$-SCD but only the specially weighted $\vec{G}$.) The most natural way is to decompose $\vec{G}$ into two digraphs of weights $\frac{w^D}{2}$, which would both be $\rounddown{\frac{\tau}{2}}$-strongly-connected except that $\frac{w^D}{2}$ may not be integral. Our key idea is to use the orientation $E^+$ that induces a nowhere-zero $k$-flow to round $\frac{w^D}{2}$ into an integral vector: $x_e=\roundup{\frac{w^D_e}{2}}$ if $e\in E^+$ and $x_e=\rounddown{\frac{w^D_e}{2}}$ if $e\notin E^+$. We argue that both $x$ and $(w^D-x)$ are $\rounddown{\frac{\tau}{k}}$-SCD's using the fact that $E^+$ is $k$-cut-balanced. 
Using Edmonds' disjoint arborescences theorem \cite{edmonds1973edge}, we can now extract $\rounddown{\frac{\tau}{k}}$ disjoint in $r$-arborescences from the first and the same number of out $r$-arborescences from the second. Pairing them up gives us the final set of $\rounddown{\frac{\tau}{k}}$ strongly connected digraphs. 
Theorems~\ref{thm:approx1} and \ref{thm:approx2} then follow from the prior theorems about the existence of nowhere-zero flows.

\vspace{1em}
In Section \ref{sec:SCO}, we give equivalent forms of Woodall's conjecture and of the Edmonds-Giles conjecture, respectively, in terms of packing strongly connected orientations, which are of independent interest. Given an undirected graph $G=(V,E)$, let $\vec{G}=(V,E^+\cup E^-)$ be a digraph obtained from making two copies of each edge $e\in E$ and directing them oppositely, one arc being denoted by $e^+\in E^+$ and the other by $e^-\in E^-$. A \textit{$\tau$-strongly connected orientation ($\tau$-SCO)} of $G$ is a multi-subset of arcs from $E^+\cup E^-$ picking exactly $\tau$ many of $e^+$ and $e^-$ (possibly with repetitions) for each $e$ such that at least $\tau$ arcs leave every cut. In particular, a \textit{strongly connected orientation (SCO)} of $G$ is a $1$-SCO of $G$. 
One may ask whether a $\tau$-SCO can always be decomposed into $\tau$ disjoint SCO's. This is not the case. Indeed,
we prove in Theorem \ref{thm:eq_E-G} that
this question is equivalent to the Edmonds-Giles conjecture.
In contrast, we define $x$ to be a \textit{nowhere-zero $\tau$-SCO} if it is a $\tau$-SCO and $x_e\geq 1$ for every arc $e$.
In Theorem \ref{thm:eq_Woodall}, we prove that Woodall's conjecture is true if and only if for every undirected graph $G$, a nowhere-zero $\tau$-SCO can be decomposed into $\tau$ disjoint SCO's.

In Section \ref{sec:Decompose_SCD}, we extend decomposing the special weight $w^D$ into decomposing an arbitrary integral weight $w$ that is a \textit{nowhere-zero $\tau$-SCD}, i.e., an SCD such that $w_e\geq 1$ for every arc $e$. We show that if the underlying undirected graph admits a nowhere-zero (circular) $k$-flow for some rational number $k\geq 2$, then every nowhere-zero $\tau$-SCD can be decomposed into $\rounddown{\frac{\tau}{k+1}}$ strongly connected digraphs.

\subsection*{Related Work}
Shepherd and Vetta \cite{shepherd2005visualizing} raised the question of approximately packing dijoins.
They also introduced the idea of adding reverse arcs to make the digraph $\tau$-strongly-connected, then packing strongly connected subdigraphs, and finally pairing up in- and out-arborescences. 
Yet, this approach itself only gives a half
integral packing of value $\frac{\tau}{2}$ in a digraph with minimum dicut size $\tau$. 
It is conjectured by Kir{\'a}ly \cite{kiraly2007result} that every digraph with minimum dicut size $\tau$ contains two disjoint $\rounddown{\frac{\tau}{2}}$-dijoins, see also \cite{abdi2024arc}. One might notice that if this conjecture is true, together with the approach of combining in and out $r$-arborescences, one can show that there exist $\rounddown{\frac{\tau}{2}}$ disjoint dijoins in a digraph with minimum dicut size $\tau$. 
Abdi et al. \cite{abdi2023packing} proved that every digraph can be decomposed into a dijoin and a $(\tau-1)$-dijoin. Abdi et al. \cite{abdi2024arc} further showed that a digraph with minimum dicut size $\tau$ can be decomposed into a $k$-dijoin and a $(\tau-k)$-dijoin for every integer $k\in\{1,...,\tau-1\}$ under the condition that the underlying undirected graph is $\tau$-edge-connected. M\'esz\'aros \cite{meszaros2015note} proved that when the underlying undirected graph is $(q-1,1)$-partition-connected for some prime power $q$, the digraph can be decomposed into $q$ disjoint dijoins. However, none of these approaches tell us how to decompose a digraph with minimum dicut size $\tau$ into a large number of disjoint dijoins without connectivity requirements. We also refer to the papers that view the problem from the perspective of reorienting the directions of a subset of arcs to make the graph strongly connected, such as \cite{abdi2024arc,chudnovsky2016disjoint,schrijverobervation}. For the context of nowhere-zero $k$-flow, we refer interested readers to \cite{jaeger1976nowhere,jaeger1979flows,jaeger1984circular,lovasz2013nowhere,seymour1981nowhere,thomassen2012weak,younger1983integer} and the excellent survey by Jaeger \cite{jaeger1988nowhere}. 
Finally, Schrijver's unpublished notes \cite{schrijverobervation} reformulate Woodall's conjecture into the problem of partitioning the arcs of the digraph into strengthenings.  A \textit{strengthening} is an arc set $J \subseteq A$ which, on flipping the orientation of the arcs in $J$, makes the digraph strongly connected. This inspired the reformulations in Theorem \ref{thm:eq_E-G} and Theorem \ref{thm:eq_Woodall}.

\section{Technical Background}

\paragraph{Notation.} Given a graph $G=(V,E)$, for a node subset $U\subseteq V$, let $\delta_G(U)$ denote the cut induced by $U$. For an edge subset $F\subseteq E$, let $\delta_F(U):=\delta_G(U)\cap F$ denote the edges in $F$ that are also in the cut induced by $U$. 
Given a digraph $D=(V,A)$, for a node subset $U\subseteq V$, denote by $\delta_D^+(U)$ and $\delta_D^-(U)$ the arcs leaving and entering $U$ in $D$, respectively. Denote by $\delta_D(U):=\delta_D^+(U)\cup \delta_D^-(U)$ the cut induced by $U$. Similarly, for an arc subset $B\subseteq A$, denote by $\delta_B^+(U):=\delta_D^+(U)\cap B$, $\delta_B^-(U):=\delta_D^-(U)\cap B$, and $\delta_B(U):=\delta_D(U)\cap B$. 
Denote by $e^{-1}$ the reverse of arc $e\in A$ and $B^{-1}$ the arcs obtained from reversing the directions of the arcs in $B\subseteq A$. 
Given an undirected graph $G=(V,E)$, make two copies of each edge $e=\{u,v\}\in E$ and direct one to be $e^+=(u,v)$ and the other to be $e^-=(v,u)$ (the assignment of $(u,v)$ and $(v,u)$ to $e^+$ and $e^-$ is arbitrary). Denote by $E^+:=\cup_{e\in E}\{e^+\}$ and $E^-:=\cup_{e\in E}\{e^-\}$. Each of $E^+$ and $E^-$ is an \textit{orientation} of $G$ satisfying $E^-=(E^+)^{-1}$. Denote the resulting digraph by $\vec{G}=(V,E^+\cup E^-)$.
For a function $f:E\rightarrow \R$, denote by $f(F):=\sum_{e\in F}f(e)$, $\forall F\subseteq E$. For a subset $F\subseteq E$, denote by $\chi_F\in\{0,1\}^E$ the characteristic vector of $F$.

\subsection{Strongly Connected Digraphs and Arborescences}\label{sec:SCD}

Let $G=(V,E)$ be an undirected graph and let $\vec{G}=(V,E^+\cup E^-)$ be the digraph obtained by copying each edge of $G$ twice and directing them oppositely. Recall that strongly connected (sub)digraph (SCD) of $G$ is a subset of arcs $F\subseteq E^+\cup E^-$ that spans $V$ and the digraph induced by $F$ is strongly connected. Equivalently, $|\delta^+_{F}(U)|\geq 1$, for every $\emptyset \neq U\subsetneqq V$. For an integral vector $x\in\Z^{E^+\cup E^-}_+$, we say $x$ is a $\tau$-strongly connected digraph ($\tau$-SCD) of $G$ if it satisfies $|x(\delta^+_{\vec{G}}(U))|\geq \tau, \forall \emptyset \neq U\subsetneqq V$.

We recall a classical theorem about decomposing digraphs into arborescences.
Let $D=(V,A)$ be a digraph and let $r\in V$ be a \textit{root}. An \textit{out (resp. in) $r$-arborescence} is 
a digraph such that for every $v\in V\setminus \{r\}$, there is exactly one directed walk from $r$ to $v$ (resp. $v$ to $r$). A digraph $D$ is called an \textit{out (in) arborescence} if there exists a vertex $r$ such that $D$ is an out (in) $r$-arborescence. Edmonds's disjoint arborescences theorem \cite{edmonds1973edge} states that when fixing a root $r$, every \textit{rooted-$\tau$-connected} digraph, i.e., $|\delta_D^+(U)|\geq \tau, \forall U\subsetneqq V, r\in U$, contains $\tau$ disjoint out $r$-arborescences. Furthermore, this decomposition can be computed in polynomial time.
\begin{theorem}[\cite{edmonds1973edge}]\label{thm:edmonds}
    Given a digraph $D$ and a root $r$, if $D$ is rooted-$\tau$-connected, then $D$ contains $\tau$ disjoint out $r$-arborescences, and such $r$-arborescences can be found in polynomial time.
\end{theorem}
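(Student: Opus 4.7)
I would prove the theorem by induction on $\tau$. The base case $\tau \leq 1$ is immediate from reachability: rooted-$1$-connectivity means every vertex is reachable from $r$, so any DFS tree rooted at $r$ provides a single out $r$-arborescence. For the inductive step, the whole argument reduces to the following single-arborescence extension lemma, due to Lov\'asz: if $D$ is rooted-$\tau$-connected with $\tau \geq 1$, then there exists an out $r$-arborescence $T \subseteq A$ whose removal leaves $D \setminus T$ rooted-$(\tau - 1)$-connected. Granting this lemma, the inductive hypothesis applied to $D \setminus T$ produces $\tau - 1$ further arc-disjoint out $r$-arborescences, which together with $T$ give the desired $\tau$.

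To prove the extension lemma, I would grow $T$ greedily. Maintain a vertex set $S$ (initially $\{r\}$) together with a partial out $r$-arborescence on $S$; at each step add an arc $(u,v) \in A$ with $u \in S$ and $v \notin S$ to $T$ and absorb $v$ into $S$. Call $U \subsetneq V$ \emph{tight} if $r \in U$ and $|\delta^+_D(U)| = \tau$; preserving rooted-$(\tau - 1)$-connectivity of $D \setminus T$ is equivalent to the invariant that $|\delta^+_T(U)| \leq 1$ for every tight $U$. An extension arc is \emph{legal} if, upon adding it, no already-crossed tight set is crossed a second time.

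The main obstacle, and the heart of the argument, is to show that a legal extension arc always exists while $S \neq V$. The standard route is an uncrossing argument based on submodularity of $|\delta^+_D(\cdot)|$: the family of tight sets is closed under appropriate unions and intersections, so the union $U^*$ of all currently used tight sets (those already crossed by an arc of $T$) is itself tight whenever it is non-empty and distinct from $V$. Hence $|\delta^+_D(U^*)| = \tau$, of which exactly one arc lies in $T$, so $\tau - 1$ legal candidates in $\delta^+_D(U^*)$ remain; combined with a short case analysis for the situation where no tight set has yet been used (in which case any outgoing arc of $S$ is legal by rooted-$\tau$-connectivity), this produces the required extension arc. The bookkeeping relating the frontier $S$ to the union $U^*$ of used tight sets is delicate but follows the classical uncrossing pattern.

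For the polynomial-time claim, each greedy extension step reduces to polynomially many minimum-cut computations (to detect the currently used tight sets and certify legality of a candidate arc), giving a polynomial-time implementation of the extension lemma; iterating it $\tau$ times produces all the arborescences in polynomial time overall. More efficiently, the same decomposition can be obtained directly via Frank's matroid-union algorithm applied to the union of $\tau$ copies of the appropriate arborescence matroid.
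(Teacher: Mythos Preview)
The paper does not prove this statement; Edmonds' theorem is quoted as a classical tool and used as a black box, so there is no proof in the paper to compare your argument against. Your outline is the standard Lov\'asz simplification, which is the textbook route.

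There is, however, one concrete slip in the sketch. You define $U$ to be \emph{tight} when $|\delta^+_D(U)|=\tau$ and then assert that rooted-$(\tau-1)$-connectivity of $D\setminus T$ is equivalent to the invariant $|\delta^+_T(U)|\le 1$ for every tight $U$. That equivalence is false: a set $U$ with $|\delta^+_D(U)|=\tau+1$ is not tight in your sense, yet the greedy growth can reach $|\delta^+_T(U)|=3$, giving $|\delta^+_{D\setminus T}(U)|=\tau-2$ while your invariant is still satisfied. The sets Lov\'asz's argument actually uncrosses are the \emph{critical} sets of the residual digraph, namely those $U$ with $r\in U\subsetneq V$ and $|\delta^+_{D\setminus T}(U)|=\tau-1$; these depend on the current $T$, and it is this family that submodularity of $|\delta^+_{D\setminus T}(\cdot)|$ closes under union and intersection. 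With that correction the argument can be completed along the lines you indicate, but the step you wave at as ``delicate bookkeeping'' --- showing that some arc out of $S$ avoids every critical set --- is the heart of Lov\'asz's proof, not a detail, and your $U^*$ as described does not yet deliver it: the $\tau-1$ surviving arcs out of $U^*$ need not have their tails in $S$.
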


A $\tau$-strongly-connected digraph is in particular rooted-$\tau$-connected. Therefore, for every fixed root $r\in V$, a $\tau$-strongly-connected digraph contains $\tau$ disjoint out $r$-arborescences. If we reverse the directions of the arcs and apply Theorem \ref{thm:edmonds}, we see that a $\tau$-strongly-connected digraph also contains $\tau$ disjoint in $r$-arborescences.

\begin{corollary}\label{cor:edmonds}
     Given a $\tau$-strongly-connected digraph $D$ and a root $r$, $D$ contains $\tau$ disjoint out (in) $r$-arborescences,  and such $r$-arborescences can be found in polynomial time.
\end{corollary}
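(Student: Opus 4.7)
The plan is to derive the corollary directly from Theorem \ref{thm:edmonds} by verifying the rooted-$\tau$-connectivity hypothesis for both the out- and in-arborescence cases, with the latter handled by reversing arcs.

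First, I would observe that any $\tau$-strongly-connected digraph $D=(V,A)$ is rooted-$\tau$-connected with respect to every choice of root $r\in V$. Indeed, the definition of $\tau$-SCD requires $|\delta_D^+(U)|\geq \tau$ for every $\emptyset \neq U\subsetneqq V$. In particular, for every $U\subsetneqq V$ with $r\in U$, we have $U\neq \emptyset$, so $|\delta_D^+(U)|\geq \tau$. This is exactly the rooted-$\tau$-connectivity condition needed to invoke Theorem \ref{thm:edmonds}, which then yields $\tau$ disjoint out $r$-arborescences, constructible in polynomial time.

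For the in-arborescence statement, I would pass to the reverse digraph $D^{-1}=(V,A^{-1})$. For every $\emptyset \neq U \subsetneqq V$ we have $|\delta_{D^{-1}}^+(U)|=|\delta_D^-(U)|=|\delta_D^+(V\setminus U)|\geq \tau$, where the last inequality uses that $V\setminus U$ is also a nonempty proper subset of $V$ and $D$ is a $\tau$-SCD. Thus $D^{-1}$ is itself a $\tau$-SCD, hence rooted-$\tau$-connected at $r$, so by Theorem \ref{thm:edmonds} it contains $\tau$ arc-disjoint out $r$-arborescences, computable in polynomial time. Reversing the arcs of these arborescences yields $\tau$ arc-disjoint in $r$-arborescences in $D$.

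I do not anticipate any real obstacle: the corollary is essentially a packaging of Edmonds' theorem, and the only point requiring care is the symmetry argument that $\tau$-strong-connectivity is preserved under arc reversal, which is immediate from taking complements of vertex sets in the defining cut inequality. The polynomial-time claim is inherited verbatim from the algorithmic part of Theorem \ref{thm:edmonds}.
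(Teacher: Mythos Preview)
Your proposal is correct and follows essentially the same approach as the paper: observe that $\tau$-strong-connectivity implies rooted-$\tau$-connectivity at any root, apply Theorem~\ref{thm:edmonds} for the out-arborescences, and reverse arcs to handle the in-arborescence case. Your write-up simply spells out in more detail why the reversed digraph remains a $\tau$-SCD.
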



\subsection{Nowhere-Zero Flows and Cut-Balanced Orientations}
Let $G=(V,E)$ be an undirected graph and $k$ an integer such that $k\geq 2$. Recall that a nowhere-zero $k$-flow of $G$ is an orientation $E^+$ and a function $f:E^+ \rightarrow \{1,2,...,k-1\}$ such that $f(\delta_{E^+}^+(v))=f(\delta_{E^+}^-(v))$ for every vertex $v \in V$. 
Many efforts have been made to find the smallest $k$ such that there exists a nowhere-zero $k$-flow (see e.g. in \cite{jaeger1988nowhere}). Tutte made the following $5$-flow conjecture.
\begin{conjecture}[Tutte, 5-flow]\label{conj:5-flow}
    Every $2$-edge-connected graph has a nowhere-zero $5$-flow.
\end{conjecture}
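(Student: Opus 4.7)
This is Tutte's 1954 5-flow conjecture, still open after seven decades; I will therefore sketch a research strategy rather than a completed proof. The plan is to reduce to a structurally restricted class and then bootstrap from the known higher-flow theorems. First I would invoke the standard reductions: a graph with a bridge admits no nowhere-zero $k$-flow for any finite $k$, so one may assume the graph is bridgeless; by suppressing degree-$2$ vertices and splitting off at vertices of degree $\geq 4$ while preserving $2$-edge-connectivity (in the style of Fleischner), the conjecture reduces to the cubic case. Further reductions due to Kochol show that it suffices to treat cyclically $6$-edge-connected cubic graphs of large girth, the class on which the obstructions concentrate.

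Next I would try to bootstrap from Seymour's nowhere-zero $6$-flow theorem. A $6$-flow, viewed modulo $5$, gives a $\Z_5$-valued function that is a flow everywhere except on an ``error set'' of edges whose original integer flow value is divisible by $5$. The goal is to correct this error set by rerouting flow along a carefully chosen family of directed cycles, exploiting the slack provided by high cyclic edge-connectivity. Concretely, I would set up a T-join-type problem on the edges carrying flow $\equiv 0 \pmod 5$ and try to show that these edges can be covered by cycles along which shifting the flow values by $\pm 5$ produces a valid $\Z_5$-flow without creating new zeros. In parallel, I would examine the circular-flow perspective of Goddyn and Lov\'asz et al.\ \cite{lovasz2013nowhere} to see whether their cut-balanced orientation machinery, which already supplies Theorem~\ref{thm:pack_dijoins} at $k=2+\tfrac{1}{p}$, can be sharpened in the cubic case to hit $k=5$.

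The hard part --- and the reason this conjecture has resisted proof --- is the Petersen graph and the broader family of snarks. Any rerouting scheme must contend with the global flow constraints on these graphs, and purely local modifications invariably fail. I therefore expect that an honest attempt along the lines above would yield at best a conditional reduction, for instance to a statement about perfect-matching covers of snarks or to the Berge--Fulkerson conjecture, rather than an unconditional resolution. Within the scope of the present paper, the appropriate use of Conjecture~\ref{conj:5-flow} is as a hypothesis: if true, plugging $k=5$ into Theorem~\ref{thm:pack_dijoins} would immediately sharpen Theorem~\ref{thm:approx1} from $\rounddown{\frac{\tau}{6}}$ to $\rounddown{\frac{\tau}{5}}$ disjoint dijoins in every digraph of minimum dicut size $\tau$.
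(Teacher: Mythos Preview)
The paper does not prove this statement: it is stated there purely as an open conjecture, with the surrounding text noting that only relaxations (Jaeger's $8$-flow theorem and Seymour's $6$-flow theorem, Theorem~\ref{thm:6-flow}) have been established. So there is no ``paper's own proof'' to compare against, and your recognition that the conjecture is open is exactly right.

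Your final paragraph captures precisely how the paper uses Conjecture~\ref{conj:5-flow}: as a hypothesis that would tighten Theorem~\ref{thm:approx1} from $\rounddown{\tau/6}$ to $\rounddown{\tau/5}$ via Theorem~\ref{thm:pack_dijoins}. That is the only role the conjecture plays here. The research sketch you give (reduction to snarks of high cyclic connectivity via Kochol, attempting to correct a $6$-flow modulo $5$, connections to Berge--Fulkerson) is a reasonable summary of known partial approaches, but none of it is expected or needed for the present paper, and you should not present it as a proof or proof attempt. The honest statement is simply that Conjecture~\ref{conj:5-flow} remains open.
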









Relaxations of Conjectures \ref{conj:5-flow} have been proved.
Jaeger \cite{jaeger1976nowhere,jaeger1979flows} proved that every $2$-edge-connected graph admits a nowhere-zero $8$-flow, which was improved by Seymour \cite{seymour1981nowhere} showing that there is always a nowhere-zero $6$-flow in $2$-edge-connected graphs. Younger \cite{younger1983integer} gave an algorithmic version of this result.

\begin{theorem}[\cite{seymour1981nowhere,younger1983integer}]\label{thm:6-flow}
    Every $2$-edge-connected graph admits a nowhere-zero $6$-flow which can be found in polynomial time.
\end{theorem}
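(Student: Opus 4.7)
The plan is to follow Seymour's original route, which converts the integer-flow question into a modular-flow question and then into a combinatorial decomposition. By Tutte's classical equivalence, a graph admits a nowhere-zero $k$-flow if and only if it admits a nowhere-zero $\Z_k$-flow, so I would aim to construct a nowhere-zero $\Z_6$-flow. Using the Chinese Remainder isomorphism $\Z_6 \cong \Z_2 \times \Z_3$, such a flow corresponds to a pair $(f_2,f_3)$ of a $\Z_2$-flow and a $\Z_3$-flow whose supports together cover every edge of $G$. Equivalently, it suffices to find a spanning even subgraph $H\subseteq G$ (the support of $f_2$, with all degrees even) such that the contraction $G/H$ admits a nowhere-zero $\Z_3$-flow; lifting that flow back to $G$ and combining it with $f_2=\chi_H$ yields the desired $\Z_6$-flow.

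Second, I would reduce to the $3$-edge-connected case by a simple induction: if $G$ contains a $2$-edge-cut, contract each side in turn to obtain two smaller $2$-edge-connected graphs, then glue their $6$-flows together by matching values across the cut. The main obstacle, and the technical heart of Seymour's argument, is the $3$-edge-connected case: to produce, in every $3$-edge-connected graph, a spanning even subgraph $H$ whose contraction has a nowhere-zero $\Z_3$-flow. I would attack this by induction on $|E(G)|$, using a careful selection of short even subgraphs (unions of cycles through carefully chosen pairs of edges in $3$-edge-cuts) together with operations that either strictly decrease $|E(G)|$ while keeping a $3$-edge-connected minor, or perform a safe splitting-off at low-degree vertices. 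Controlling parity so that the accumulated $H$ is globally even, while ensuring the residual graph still supports a $\Z_3$-flow (equivalently, admits a bridgeless orientation with in-degree equal to out-degree modulo $3$ at every vertex, by another application of Tutte's theorem), is the delicate part where most of the combinatorial work lies.

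For the algorithmic conclusion, I would observe that every step of the inductive proof is constructive and runs in polynomial time: detecting $2$- and $3$-edge-cuts via max-flow, performing the required contractions and splittings, and solving for the $\Z_3$-flow on the bridgeless quotient via a polynomial linear system over $\Z_3$ (along with an appropriate cut-balanced orientation) can all be carried out efficiently. Tracking the decomposition through the recursion and reassembling $f_2$ and $f_3$ then yields a nowhere-zero $6$-flow of $G$ in polynomial time, matching Younger's constructive version.
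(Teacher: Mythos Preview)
The paper does not prove this statement at all: Theorem~\ref{thm:6-flow} is quoted as a known result from the cited references and is used as a black box in the proof of Theorem~\ref{thm:approx1}. Your outline is a reasonable sketch of Seymour's original argument (Tutte's equivalence, the $\Z_6\cong\Z_2\times\Z_3$ decomposition into a $\Z_2$-flow supported on an even subgraph $H$ and a $\Z_3$-flow on $G/H$, and the reduction to the $3$-edge-connected case), together with Younger's observation that the construction is algorithmic; but there is simply no proof in the paper to compare it against.
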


Let $p,q$ be two integers such that $0<p\leq q$. A \textit{nowhere-zero circular $(1+\frac{q}{p}$)-flow} of $G$ is an orientation $E^+$ and $f:E^+\rightarrow \{p,p+1,...,q\}$, such that $f(\delta_{E^+}^+(v))=f(\delta_{E^+}^-(v))$. 
In fact, $f$ is allowed to take any real values in $[p,q]$ (or in $[1,\frac{q}{p}]$ after rescaling). This is because the flow polytope $\big\{f\in \R^{E^+}:f(\delta_{E^+}^+(v))=f(\delta_{E^+}^-(v)),\ p\leq f\leq q\big\}$ is integral, and thus the existence of a fractional $f$ is equivalent to the existence of an integral $f$.
It was shown that if $G$ admits a nowhere-zero circular $\alpha$-flow, then it also admits a nowhere-zero circular $\beta$-flow for every $\beta\geq \alpha$ \cite{goddyn1998k}.

It turns out that if the connectivity is sufficiently high, the graph admits a nowhere-zero circular $k$-flow for some $k>2$ that is arbitrarily close to $2$. Thomassen \cite{thomassen2012weak} proved that there is a function $Q:\R_+\rightarrow \R_+$ such that every $Q(p)$-edge-connected graph admits a nowhere-zero circular $(2+\frac{1}{p})$-flow, where $Q(p)=\Theta(p^2)$. 
Lov\'asz et al. \cite{lovasz2013nowhere} strengthened the result by reducing the connectivity requirement to linear as follows.

\begin{theorem}[\cite{lovasz2013nowhere}]\label{thm:6k-connected:circular flow}
    Every $6p$-edge-connected graph admits a nowhere-zero circular $(2+\frac{1}{p})$-flow.
\end{theorem}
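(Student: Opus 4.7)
The plan is to reformulate the flow existence as a cut-balanced orientation problem and then construct the orientation by an inductive argument leveraging the $6p$-edge-connectivity. By Jaeger's characterization stated in the introduction, the existence of a nowhere-zero circular $(2+\tfrac{1}{p})$-flow is equivalent to the existence of an orientation $E^+$ of $G$ with
\[
\frac{p}{2p+1}\,|\delta_G(U)| \;\le\; |\delta^+_{E^+}(U)| \;\le\; \frac{p+1}{2p+1}\,|\delta_G(U)|
\]
for every nontrivial $U\subset V$. Equivalently, the signed imbalance across every cut satisfies $\big||\delta^+_{E^+}(U)|-|\delta^-_{E^+}(U)|\big|\le |\delta_G(U)|/(2p+1)$, a uniform near-Eulerianness requirement. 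So the goal reduces to producing such an orientation on any $6p$-edge-connected graph.

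My approach would proceed through a modular intermediate. First, I would construct a \emph{modulo $(2p+1)$-orientation} of $G$, i.e., one with $d^+(v)\equiv d^-(v)\pmod{2p+1}$ at every vertex $v$. To obtain it I would induct on $|V(G)|+|E(G)|$, using a Mader-style edge-splitting around a high-degree vertex that preserves $6p$-edge-connectivity in the remaining graph; applying the hypothesis inductively to the smaller graph yields a modulo orientation which is then lifted back to $G$, using the directional freedom of the reintroduced edges to satisfy the modular constraint at the split vertex. Once a modulo orientation is in hand, a double-counting argument shows that $|\delta^+(U)|-|\delta^-(U)|$ is automatically a multiple of $2p+1$ across every cut, already ruling out all imbalances except those that are multiples of $2p+1$.

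To upgrade the modulo orientation into a cut-balanced one, I would use an exchange argument: whenever some cut $U$ still has imbalance at least $2p+1$ in absolute value, use the $6p$-edge-connectivity of $G$ to locate a directed $U$-to-$(V\setminus U)$ path in the current orientation; reversing this path preserves the modular constraint (it changes each $d^+(v)-d^-(v)$ by $0$ or $\pm 2$, but at the path endpoints it changes by $\pm 2$, and one verifies this can be absorbed without breaking modularity by choosing the path to be a cycle through a reference vertex, or by pairing two path-reversals) while decreasing the imbalance at $U$. A suitable potential function shows this process terminates.

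The main obstacle is that the constant $6p$ leaves essentially no slack: for a cut of size exactly $6p$ the admissible imbalance is strictly less than $3$, so one must eliminate imbalances of $\pm(2p+1), \pm2(2p+1),\ldots$ that are consistent with parity, and this must be done simultaneously at every cut without undoing previous fixes. Calibrating the edge-splitting step so that the induction hypothesis applies to the reduced graph, and simultaneously ensuring that the path-reversal step cannot destroy balance on already-repaired cuts, is the delicate part; it is precisely where the partition-connectivity consequences of $6p$-edge-connectivity (e.g.\ Nash-Williams' spanning-tree decompositions, which provide $3p$ disjoint spanning trees) enter to keep enough reversible structure available throughout the argument.
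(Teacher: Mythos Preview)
The paper does not prove this theorem; it is quoted from \cite{lovasz2013nowhere} and used as a black box, so there is no proof here to compare your proposal against.

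On the substance of your sketch: the reduction to modulo-$(2p+1)$ orientations is indeed the framework of \cite{lovasz2013nowhere}, but two points deserve correction. First, Step~3 is superfluous. It is a standard fact (due to Jaeger, via Tutte's equivalence of $\mathbb{Z}_k$-flows and integer $k$-flows) that $G$ admits a nowhere-zero circular $\tfrac{2p+1}{p}$-flow if and only if it admits a modulo-$(2p+1)$ orientation; so once Step~2 succeeds you are finished, and no ``upgrading'' is needed. Your proposed upgrade is also broken as written: reversing a $u$--$v$ path changes $d^+-d^-$ by $\pm2$ at $u$ and at $v$, which destroys the modular condition since $2\not\equiv0\pmod{2p+1}$. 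Second, and more seriously, the induction in Step~2 will not close in the form you describe. After splitting off at a vertex $z$ and orienting the smaller graph by induction, you must extend the orientation back through $z$ so that $d^+(z)-d^-(z)\equiv0\pmod{2p+1}$; but the inductive hypothesis, which only guarantees orientations with zero boundary everywhere, gives you no control over the residue already forced at $z$ by the orientation of its neighbours. This is exactly why \cite{lovasz2013nowhere} proves the stronger statement that for \emph{every} $\beta:V\to\mathbb{Z}_{2p+1}$ with $\sum_v\beta(v)=0$ there is an orientation with $d^+(v)-d^-(v)\equiv\beta(v)$ for all $v$; it is this extra freedom in the target boundary that allows the inductive step to go through.
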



Nowhere-zero integer flow and circular flow are closely related to cut-balanced orientations. Recall that, for $k \geq 2$, an orientation $E^+$ is $k$-cut-balanced if $\frac{1}{k}|\delta_{G}(U)|\leq |\delta^+_{E^+}(U)|\leq \frac{k-1}{k}|\delta_{G}(U)|$, $\forall \emptyset \neq U\subsetneqq V$. This is equivalent to saying that both $\delta_{E^+}^+(U)$ and $\delta_{E^+}^-(U)$ 
contain at least a fraction $\frac{1}{k}$ of the edges 
in $\delta_{G}(U)$. It is clear that $E^+$ is $k$-cut-balanced if and only if $E^-=(E^+)^{-1}$ is $k$-cut-balanced. Jaeger \cite{jaeger1976balanced} proved that $E^+$ is a $k$-cut-balanced orientation if and only if it induces a nowhere-zero $k$-flow. This observation has also been pointed out in different places (e.g. see \cite{goddyn2001open},\cite{goddyn1998k}, \cite{thomassen2012weak})). Since we repeatedly use it, we provide its proof.


\begin{lemma}[\cite{jaeger1976balanced}]\label{lem:key}
    Given an undirected graph $G = (V,E)$ and a rational $k\geq 2$, an orientation induces a nowhere-zero circular $k$-flow if and only if it is $k$-cut-balanced.
\end{lemma}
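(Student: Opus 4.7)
Write the rational $k\geq 2$ as $k=1+q/p$ for positive integers $p,q$ with $p\leq q$, so that a nowhere-zero circular $k$-flow on an orientation $E^+$ is, by definition, an integral (equivalently real-valued, since the flow polytope is totally unimodular and hence integral) assignment $f:E^+\to[p,q]$ satisfying $f(\delta_{E^+}^+(v))=f(\delta_{E^+}^-(v))$ at every vertex $v$. My plan is to prove both implications by reducing the condition on every cut to the capacity-bound feasibility condition of Hoffman's circulation theorem, applied with constant lower bound $\ell\equiv p$ and constant upper bound $u\equiv q$ on all arcs of $E^+$.

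\emph{Forward direction (easy).} Suppose $E^+$ carries such a flow $f$. Summing the conservation equations over any $\emptyset\neq U\subsetneq V$ gives $f(\delta_{E^+}^+(U))=f(\delta_{E^+}^-(U))$. Letting $a=|\delta_{E^+}^+(U)|$ and $b=|\delta_{E^+}^-(U)|$ so that $a+b=|\delta_G(U)|$, the bounds $p\leq f\leq q$ yield $pa\leq qb$, i.e.\ $(p+q)a\leq q\,|\delta_G(U)|$, and since $(p+q)/q = k/(k-1)$ this is exactly $a\leq \frac{k-1}{k}|\delta_G(U)|$. Applying the same argument to $V\setminus U$ yields $b\leq \frac{k-1}{k}|\delta_G(U)|$, equivalently $a\geq \frac{1}{k}|\delta_G(U)|$. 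Hence $E^+$ is $k$-cut-balanced.

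\emph{Reverse direction (the main step).} Assume $E^+$ is $k$-cut-balanced. I will invoke Hoffman's circulation theorem: a circulation $f$ with $\ell\leq f\leq u$ exists on a digraph iff $\ell(\delta^+(U))\leq u(\delta^-(U))$ for every $U\subseteq V$. With $\ell\equiv p$ and $u\equiv q$ on $E^+$, this reduces to $p\,|\delta_{E^+}^+(U)|\leq q\,|\delta_{E^+}^-(U)|$ for every $U$, which by the same algebra as above is equivalent to $|\delta_{E^+}^+(U)|\leq \frac{k-1}{k}|\delta_G(U)|$. This is precisely the upper bound in the $k$-cut-balanced condition, so it holds by hypothesis. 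Hoffman therefore yields a (real) flow $f:E^+\to[p,q]$ that is conserved at every node, and integrality of the flow polytope upgrades it to an integer-valued flow; this is a nowhere-zero circular $k$-flow on $E^+$.

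The only delicate point is the rational-vs-integer issue: one must justify that the existence of a real-valued $f\in[p,q]^{E^+}$ satisfying conservation implies the existence of an integer-valued such $f$, which I will handle by citing total unimodularity of the node-arc incidence matrix (the remark already made by the authors just after the definition of nowhere-zero circular flows). The rest of the argument is a short bookkeeping calculation of the constant $\frac{q}{p+q}=\frac{k-1}{k}$.
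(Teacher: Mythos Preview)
Your proposal is correct and follows essentially the same route as the paper: the forward direction uses flow conservation across a cut together with the lower and upper bounds on $f$, and the reverse direction invokes Hoffman's circulation theorem to produce the desired flow. The only cosmetic difference is that you work with bounds $[p,q]$ while the paper rescales to $[1,k-1]$; the paper also appeals directly to Hoffman without separately remarking on integrality, whereas you make that step explicit via total unimodularity.
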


\proof
    ``Only if" direction: Let $E^+$ and $f:E^+\rightarrow [1,k-1]$ be a nowhere-zero circular $k$-flow. By flow conservation, $f(\delta^+_{E^+}(U))=f(\delta^-_{E^+}(U)),\forall U \subsetneqq V, U\neq\emptyset$. Thus, one has $1\cdot |\delta^+_{E^+}(U)|\leq f(\delta^+_{E^+}(U))=f(\delta^-_{E^+}(U))\leq (k-1)\cdot |\delta^-_{E^+}(U)|$. Similarly, one also has $1\cdot |\delta^-_{E^+}(U)|\leq f(\delta^-_{E^+}(U))=f(\delta^+_{E^+}(U))\leq (k-1)\cdot |\delta^+_{E^+}(U)|$. It follows from the equality $|\delta_G(U)|=|\delta_{E^+}^+(U)|+|\delta_{E^+}^-(U)|$ that $\frac{1}{k}|\delta_{G}(U)|\leq |\delta^+_{E^+}(U)|\leq \frac{k-1}{k}|\delta_{G}(U)|$. Thus, $E^+$ is a $k$-cut-balanced orientation.

    ``If" direction: Let $E^+$ be a $k$-cut-balanced orientation. By Hoffman's circulation theorem \cite{hoffman2003some}, there exists a circulation $f\in\R^{E^+}$ satisfying $1\leq f_e\leq k-1$, $\forall e\in E^+$, if and only if $|\delta_{E^+}^-(U)|\leq (k-1)|\delta_{E^+}^+(U)|$, $\forall \emptyset \neq U\subsetneqq V$, which is equivalent to the $k$-cut-balanced condition $|\delta_{E^+}^+(U)|\geq \frac{1}{k}|\delta_{G}(U)|$, $\forall \emptyset \neq U\subsetneqq V$. Thus, $E^+$ together with $f$ is a nowhere-zero circular $k$-flow.
\qedsymbol

\subsection{Submodular Flows}
A family $\mathcal{C}$ of subsets of a set $V$ is a \emph{crossing family} if for all $U, W\in \mathcal{C}$ such that $U\cap W\neq \emptyset$ and $U\cup W\neq V$, one has $U\cap W, U\cup W\in \mathcal{C}$. A function $f:\mathcal{C}\rightarrow \R$ is \emph{crossing submodular} over a crossing family $\mathcal{C}$ if for every $U,W\in\mathcal{C}$ such that $U\cap W\neq \emptyset$ and $U\cup W\neq V$, one has $f(U\cap W)+f(U\cup W)\leq f(U)+f(W)$. Given a digraph $D=(V,A)$ and an integral crossing submodular function $f:\mathcal{C}\rightarrow \Z$ over a crossing family $\mathcal{C}$, a \emph{submodular flow} is a vector $x$ in the following polyhedron. 
\[
P(f):=\big\{x\in \R^A \mid x(\delta^-(U))-x(\delta^+(U))\leq f(U),\ \forall U\in\mathcal{C}\big\}.
\]
A polyhedron $P$ is \emph{box-integral} if $P\cap \{x: l\leq x\leq u\}$ is integral for every $l,u\in \Z$. 
The seminal work of Edmonds and Giles \cite{edmonds1977min} shows the following (see also \cite{schrijver2003combinatorial} Chapter 60)).
\begin{theorem}[\cite{edmonds1977min}]\label{thm:submodular-flow}
     For every integral crossing submodular function $f$, the submodular flow polyhedron $P(f)$ is box-integral.
\end{theorem}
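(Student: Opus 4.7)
The plan is to prove box-integrality by establishing total dual integrality (TDI) of the linear system
\[
x(\delta^-(U))-x(\delta^+(U))\leq f(U)\quad(U\in\mathcal{C}),\qquad l\leq x\leq u.
\]
Fix an integer cost $c\in\Z^A$ and consider $\min\{c^\top x:x\in P(f),\ l\leq x\leq u\}$ with its LP dual, whose variables are $y_U\geq 0$ for $U\in\mathcal{C}$ together with nonnegative duals $\alpha_a,\beta_a$ for the box constraints. Showing the dual attains an integer optimum for every integer $c$ is the definition of TDI, and then standard LP-duality implies that the primal LP has an integer optimum whenever $l,u$ are integer, which is exactly box-integrality.

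The core step is an uncrossing argument on the dual. Among all dual optima $y^*$, select one minimizing $\sum_{U\in\mathcal{C}}y^*_U\cdot|U|(|V|-|U|)$. Suppose two sets $U,W$ in the support of $y^*$ cross, i.e., $U\cap W\neq\emptyset$, $U\cup W\neq V$, and neither contains the other. Perform the swap $y_U,y_W\leftarrow y_U-\varepsilon,\ y_W-\varepsilon$ and $y_{U\cap W},y_{U\cup W}\leftarrow y_{U\cap W}+\varepsilon,\ y_{U\cup W}+\varepsilon$. For each arc $a=(s,t)$, the coefficient of $y_S$ in the dual arc-equation equals $\mathbf{1}_{t\in S}-\mathbf{1}_{s\in S}$, and the pointwise identity $\mathbf{1}_U+\mathbf{1}_W=\mathbf{1}_{U\cap W}+\mathbf{1}_{U\cup W}$ transfers this coefficient-wise, so the dual arc-equations are preserved. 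Crossing submodularity $f(U\cap W)+f(U\cup W)\leq f(U)+f(W)$ guarantees the dual objective does not degrade, while the complexity measure strictly decreases, contradicting minimality. Hence after finitely many swaps the support of $y^*$ is a laminar family $\mathcal{L}$.

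On a laminar support the restricted dual constraint matrix, whose rows are $\chi_{\delta^-(U)}-\chi_{\delta^+(U)}$ for $U\in\mathcal{L}$, is a network matrix: the column of an arc $(s,t)$ is the signed incidence vector of the unique path joining the smallest set of $\mathcal{L}$ containing $t$ to the smallest set containing $s$ in the forest representing $\mathcal{L}$. Appending the $\pm 1$ unit columns corresponding to the box-constraint duals $\alpha_a,\beta_a$ preserves total unimodularity. Since the dual right-hand side is the integer vector $c$, a TUM system with integer data has an integer optimum, establishing TDI and hence box-integrality of $P(f)$.

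The main obstacle is that crossing submodularity only constrains pairs with $U\cap W\neq\emptyset$ and $U\cup W\neq V$, so pairs in the dual support violating either condition (for instance, two disjoint members of $\mathcal{C}$, or two sets whose union is $V$) are not uncrossed by the argument above. I would handle these by observing that the sum $\chi_{\delta^-(U)}-\chi_{\delta^+(U)}+\chi_{\delta^-(W)}-\chi_{\delta^+(W)}$ in such cases decomposes into vectors already spanned by the box-constraint columns $\alpha_a,\beta_a$, so the contribution can be absorbed into the box duals without destroying the network structure on the remaining genuinely laminar part. Keeping this bookkeeping tidy while still controlling the complexity measure is where the argument needs the most care.
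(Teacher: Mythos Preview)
The paper does not prove this theorem at all; it is quoted as a known result of Edmonds and Giles, with a pointer to Schrijver's book for details. So there is no ``paper's own proof'' to compare against, and what you have written is essentially the classical Edmonds--Giles proof via TDI and uncrossing.

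Your outline is correct up to and including the uncrossing step, but the final paragraph misdiagnoses the remaining obstacle and proposes the wrong fix. After uncrossing you do \emph{not} obtain a laminar family; you obtain a \emph{cross-free} family, i.e.\ one in which any two members $U,W$ satisfy $U\subseteq W$, $W\subseteq U$, $U\cap W=\emptyset$, or $U\cup W=V$. The pairs you worry about (disjoint sets, or sets whose union is $V$) are legitimately present in the support and cannot be ``absorbed into the box duals'': the vectors $\chi_{\delta^-(U)}-\chi_{\delta^+(U)}$ for such $U$ are not in general spanned by unit vectors, so that absorption step does not go through. The correct resolution is that a cross-free family on $V$ still has a tree representation (add $V$ and $\emptyset$, then build the tree on the family and its complements), and the matrix with rows $\chi_{\delta^-(U)}-\chi_{\delta^+(U)}$ for $U$ ranging over a cross-free family is already a network matrix with respect to that tree. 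Hence total unimodularity, and with it the integer dual optimum, follows directly from cross-freeness; no separate bookkeeping for the ``non-laminar'' pairs is needed. Once you replace ``laminar'' by ``cross-free'' and invoke the tree representation of cross-free families, the argument is complete.
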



\section{An Approximate Packing of Dijoins}\label{sec:approx}
In this section, we prove our main Theorem~\ref{thm:pack_dijoins}.
Let $D=(V,A)$ be a digraph with minimum dicut size $\tau$.
Let  $k\geq 2$ be a rational.
Assume that the underlying undirected graph of $D$ admits a nowhere-zero circular $k$-flow. 
According to Lemma \ref{lem:key}, the existence of a nowhere-zero (circular) $k$-flow implies that there is a $k$-cut-balanced orientation $E^+$: for each cut, the number of arcs entering it differs from the number of arcs leaving it by a factor of at most $(k-1)$. This implies that the two subdigraphs of $D$ consisting of the arcs that are in the same orientation as $E^+$ and its complement both intersect every dicut in a large proportion of its size. This gives us a way to decompose the digraph into two subdigraphs each intersecting every dicut in a large number of arcs (For an example, see Figure \ref{fig:decompose}). Recall that a
$\tau$-dijoin is an arc subset that intersects each dicut at least $\tau$ times.

\begin{proposition}\label{prop:decompose_1/k_dijoin}
    For a digraph $D=(V,A)$ with minimum dicut size $\tau$, if the underlying undirected graph admits a nowhere-zero circular $k$-flow for some rational number $k\geq 2$, then $D$ contains two disjoint $\rounddown{\frac{\tau}{k}}$-dijoins.
\end{proposition}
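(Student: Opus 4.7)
The plan is to exploit Lemma~\ref{lem:key} together with the defining feature of a dicut, namely that one side is empty. First I would invoke Lemma~\ref{lem:key} on the underlying undirected graph $G$ of $D$ to obtain a $k$-cut-balanced orientation $E^+$, so that $\frac{1}{k}|\delta_G(U)|\leq |\delta_{E^+}^+(U)| \leq \frac{k-1}{k}|\delta_G(U)|$ for every nontrivial $U\subsetneqq V$. Note that this orientation is completely unrelated to the original orientation of $D$; it is produced purely from the existence of a nowhere-zero circular $k$-flow on $G$.

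Next, I would use $E^+$ to split the arc set $A$ of $D$ into two parts. Each arc $a\in A$ corresponds to an edge of $G$, and I set
\[
A_1 := \{\,a\in A : a \text{ is oriented in } D \text{ the same way as its underlying edge is in } E^+\,\}, \quad A_2 := A\setminus A_1.
\]
Clearly $A_1\cap A_2 = \emptyset$. The claim is that both $A_1$ and $A_2$ are $\lfloor\tau/k\rfloor$-dijoins.

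To verify this, fix any dicut $\delta_D^+(U)$ of $D$, which by definition satisfies $\delta_D^-(U)=\emptyset$. Because $D$ has no arc entering $U$, every edge of $G$ with exactly one endpoint in $U$ corresponds to an arc of $D$ leaving $U$; hence $|\delta_G(U)| = |\delta_D^+(U)| \geq \tau$. Under this identification of dicut arcs with edges of $\delta_G(U)$, the arcs of $A_1$ in the dicut are exactly those whose underlying edge belongs to $\delta_{E^+}^+(U)$, and the arcs of $A_2$ in the dicut are exactly those whose underlying edge belongs to $\delta_{E^+}^-(U)$. By the $k$-cut-balanced inequality applied to $E^+$ (and symmetrically to $E^-=(E^+)^{-1}$),
\[
|A_1\cap \delta_D^+(U)| = |\delta_{E^+}^+(U)| \geq \tfrac{1}{k}|\delta_G(U)| \geq \tfrac{\tau}{k}, \qquad |A_2\cap \delta_D^+(U)| = |\delta_{E^+}^-(U)| \geq \tfrac{\tau}{k}.
\]
Since both quantities are nonnegative integers, each is at least $\lceil \tau/k\rceil \geq \lfloor\tau/k\rfloor$, which is the desired $\lfloor\tau/k\rfloor$-dijoin condition.

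There is no real obstacle here: once one realizes that a dicut is a very restrictive object (all underlying cut edges point the same way in $D$), the $k$-cut-balanced splitting of $E^+$ transfers directly into a partition of every dicut into two pieces of size at least $\tau/k$. The only thing to be mindful of is the identification of $E^+$ (which lives on the undirected graph) with subsets of $A$, and the observation that the bound from Lemma~\ref{lem:key} applies uniformly to all cuts, not only those induced by dicuts, so it is certainly available on dicut-inducing sets $U$.
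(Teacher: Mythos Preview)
Your proof is correct and essentially identical to the paper's: both invoke Lemma~\ref{lem:key} to get a $k$-cut-balanced orientation $E^+$, set $J=A\cap E^+$ (your $A_1$), and use that on a dicut $\delta_D^+(U)$ one has $|\delta_G(U)|=|\delta_D^+(U)|$, so the cut-balance inequalities give $|J\cap\delta_D^+(U)|=|\delta_{E^+}^+(U)|\geq\tau/k$ and $|(A\setminus J)\cap\delta_D^+(U)|=|\delta_{E^+}^-(U)|\geq\tau/k$. The only cosmetic difference is that the paper computes the second bound as $|\delta_D^+(U)|-|\delta_{E^+}^+(U)|$ via the upper cut-balance inequality, while you apply the lower inequality to $E^-$ directly.
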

\begin{proof}
    Let $E^+$ and $f:E^+\rightarrow[1,k-1]$ be a nowhere-zero circular $k$-flow of the underlying undirected graph $G$ of $D$.
    By Lemma~\ref{lem:key}, $\frac{1}{k}|\delta_{G}(U)|\leq |\delta^+_{E^+}(U)|\leq \frac{k-1}{k}|\delta_{G}(U)|$ for every $U \subsetneqq V, U\neq\emptyset$.
    Take $J=A\cap E^+$ to be the arcs that have the same directions in $A$ and $E^+$. Then, for a dicut $\delta^+_D(U)$ such that $\delta_D^-(U)=\emptyset$, we have $|J\cap \delta_D^+(U)|=|\delta_{E^+}^+(U)|\geq \frac{1}{k}|\delta_{G}(U)|= \frac{1}{k}|\delta_{D}^+(U)|\geq \frac{\tau}{k}$ and $|(A\setminus J)\cap \delta_D^+(U)|=|\delta_D^+(U)|-|\delta_{E^+}^+(U)|\geq  |\delta_D^+(U)|-\frac{k-1}{k}|\delta_{G}(U)| = |\delta_D^+(U)|-\frac{k-1}{k}|\delta_{D}^+(U)|\geq \frac{\tau}{k}$. Thus, both $J$ and $A\setminus J$ are $\rounddown{\frac{\tau}{k}}$-dijoins.
\end{proof}

    \begin{figure}[htbp]
\centering
     \includegraphics[scale=0.13]{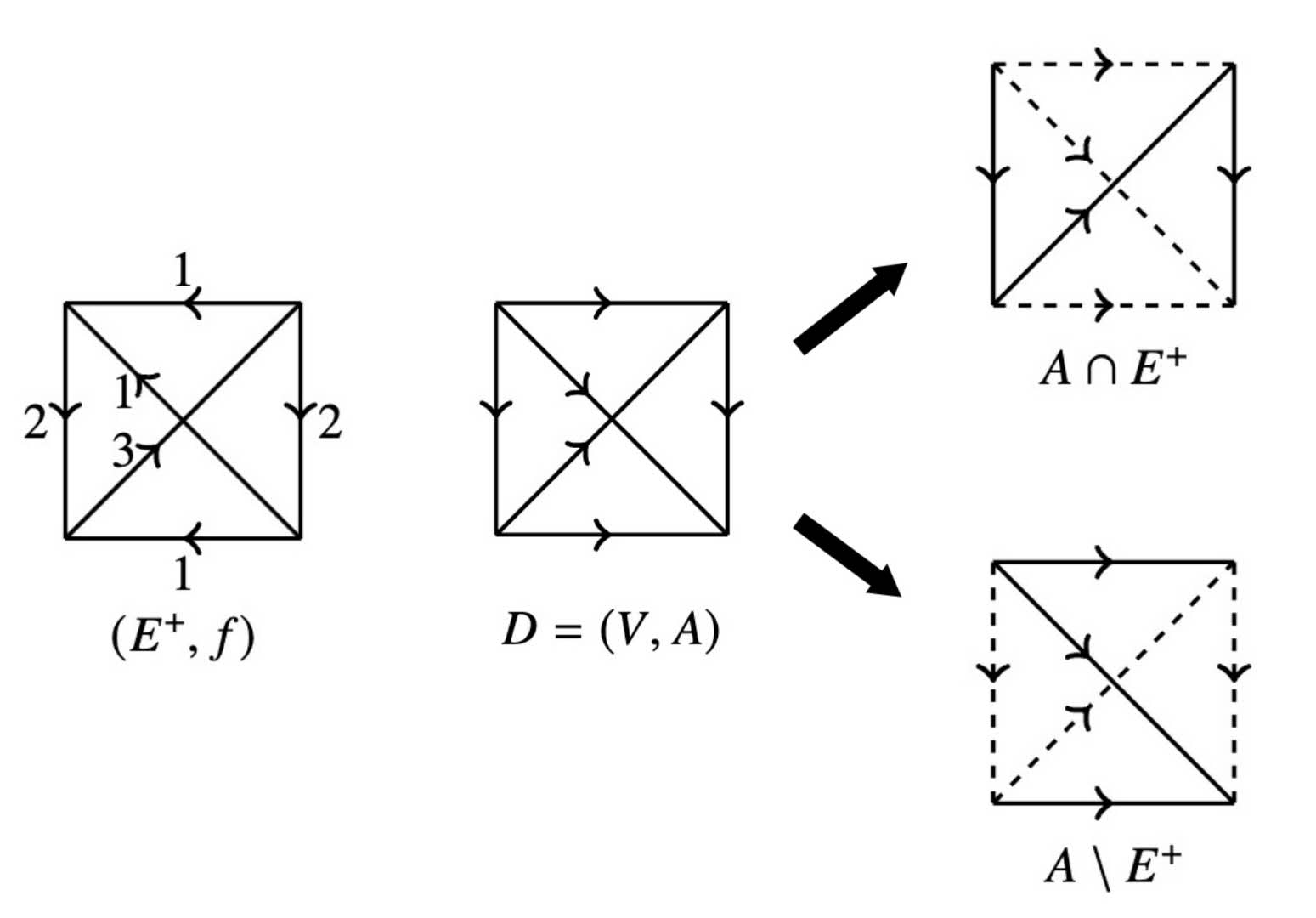}
 \vspace{-1em}
 \caption{$(E^+,f)$ is a nowhere-zero $4$-flow of $G=K_4$. $D=(V,A)$, whose underlying undirected graph is $G$, can be decomposed into a dijoin $A\cap E^+$ and a $2$-dijoin $A\setminus E^+$.}
 \label{fig:decompose}
    \end{figure}

\bigskip
In a digraph $D$ with minimum dicut size $\tau$, although Proposition \ref{prop:decompose_1/k_dijoin} suggests that $D$ can be decomposed into two digraphs, each being a $\rounddown{\frac{\tau}{k}}$-dijoin, there is no guarantee that the new digraphs have minimum dicut size at least $\rounddown{\frac{\tau}{k}}$. This is because a non-dicut in $D$ may become a dicut when we delete arcs, which can potentially have very small size. This is a general difficulty with inductive proofs for decomposing a digraph into dijoins. 

Our key observation here is that, by switching to the setting of strongly connected digraphs, we can bypass this issue.
Given a digraph $D=(V,A)$ with minimum dicut size $\tau$, let $G$ be the underlying undirected graph of $D$ and $\vec{G}=(V,E^+\cup E^-)$ be the digraph obtained by copying each edge of $G$ twice and directing them oppositely. For convenience, we let $E^+=A$ and $E^-=A^{-1}$. 
Define the \textit{weights associated with $D$} to be $w^D\in\Z^{E^+\cup E^-}$ such that $w^D_{e^+}=\tau,\forall e^+\in E^+$ and $w^D_{e^-}=1,\forall e^-\in E^-$. 
It is easy to see that $\vec{G}$ with weight $w^D$ is $\tau$-SCD. Indeed, for every $\emptyset \neq U\subsetneqq V$ such that $\delta_{D}^+(U)\neq\emptyset$, there exists some arc $e^+\in E^+$ such that $e^+\in \delta_{\vec{G}}^+(U)$, and thus $w^D(\delta_{\vec{G}}^+(U))\geq w^D_{e^+}= \tau$. Otherwise, $\delta_{D}^+(U)=\emptyset$ which means $\delta_{D}^-(U)$ is a dicut.
Therefore, $w^D(\delta_{\vec{G}}^+(U))=w^D(\delta_{E^-}^+(U))=|\delta_{D}^-(U)|\geq \tau$. This means that the augmented digraph $\vec{G}$ with weight $w^D$ is $\tau$-strongly-connected. We first reformulate the problem of packing dijoins in $D$ into a problem of packing strongly connected digraphs in $\vec{G}$ under weight $w^D$. We  then prove a decomposition result into two $\lfloor \tau/k \rfloor$-strongly-connected digraphs with the help of nowhere-zero circular $k$-flows. The following reformulation has essentially been stated and used in \cite{shepherd2005visualizing}. We include its proof here.

\begin{proposition}\label{prop:reformulation_SCD}
     For an integer $k\leq \tau$, the digraph $D$ contains $k$ disjoint dijoins if and only if $\vec{G}$ with weight $w^D$ can pack $k$ strongly connected digraphs.
\end{proposition}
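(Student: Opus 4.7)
The plan is to produce an explicit correspondence between a system of $k$ disjoint dijoins of $D$ and a packing of $k$ strongly connected digraphs in $\vec{G}$ under weight $w^D$. The guiding observation is the classical fact that a set $J \subseteq A$ is a dijoin of $D$ if and only if adding the reverse arcs $J^{-1}$ makes $D$ strongly connected. The weight $w^D$ is tailor-made for this correspondence: it allows every arc of $E^+ = A$ to sit in up to $\tau \geq k$ SCDs (so all $k$ SCDs can freely reuse the original arcs), while capping each reverse arc in $E^-$ at capacity $1$, which forces the reverse arcs to be distributed disjointly among the SCDs.

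For the forward direction, I would take $k$ disjoint dijoins $J_1, \ldots, J_k$ in $D$ and set $S_i := E^+ \cup J_i^{-1}$. Strong connectivity of each $S_i$ follows from the dijoin--strengthening equivalence: for any $\emptyset \neq U \subsetneqq V$, either some arc of $E^+$ leaves $U$ (so $\delta^+_{S_i}(U) \neq \emptyset$ directly), or $\delta^-_D(U) = \emptyset$, in which case $\delta^+_D(U)$ is a dicut that $J_i$ intersects, and the corresponding reverse arc in $J_i^{-1}$ leaves $U$ in $S_i$. The packing constraint is immediate: every $e^+ \in E^+$ is used in all $k \leq \tau = w^D_{e^+}$ sets, and every $e^- \in E^-$ lies in $S_i$ exactly when $e \in J_i$, so disjointness of the $J_i$'s gives that $e^-$ is used at most $1 = w^D_{e^-}$ times.

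For the reverse direction, given a packing $S_1, \ldots, S_k$ in $\vec{G}$ under $w^D$, I would read off the dijoins as $J_i := (S_i \cap E^-)^{-1} \subseteq A$. Pairwise disjointness of the $J_i$'s is immediate from the weight bound $w^D_{e^-} = 1$. For the dijoin property, I would fix any dicut $\delta^+_D(U)$ of $D$, so that $\delta^-_D(U) = \emptyset$, and invoke strong connectivity of $S_i$ to produce an arc of $S_i$ entering $U$. Such an arc cannot lie in $E^+ = A$, as that would place it in the empty set $\delta^-_D(U)$; hence it is some $e^-$ with $e \in J_i$, and then $e$ leaves $U$ in $D$, exhibiting an element of $J_i \cap \delta^+_D(U)$.

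Neither direction poses a real obstacle; the proof is essentially bookkeeping once one notices that the asymmetric weighting $w^D$ is engineered precisely to make the two sides match. The only point requiring mild care is to keep straight the distinction between arcs of $A = E^+$ and their reversed copies in $E^-$, and to split into cases according to which of the two an arc of $S_i$ belongs when verifying strong connectivity (forward) or the dijoin property (reverse).
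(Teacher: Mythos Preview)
Your approach matches the paper's exactly: both directions use the identical constructions $S_i = A \cup J_i^{-1}$ and $J_i = (S_i \cap E^-)^{-1}$, and the verifications proceed the same way. One notational slip to fix in your forward direction: when no arc of $E^+=A$ leaves $U$ you have $\delta^+_D(U)=\emptyset$ (not $\delta^-_D(U)=\emptyset$), so the dicut is $\delta^-_D(U)$, and it is the reverse of an arc in $J_i\cap\delta^-_D(U)$ that then leaves $U$ in $S_i$---with the signs as you wrote them the two cases are not exhaustive and the reverse arc would enter $U$ rather than leave it.
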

\begin{proof}
        Let $F_1,...,F_k$ be $k$ strongly connected digraphs of $G$ that is a packing of $\vec{G}$ under weight $w^D$. Define $J_i:=\{e^+\in E^+\mid \chi_{F_i}(e^-)=1, e^-\in E^-\}$ for every $i\in [k]$. We claim that $J_i$ is a dijoin of $D$ for every $i\in [k]$. Suppose not. Then there exists $i\in [k]$ and a dicut $\delta_D^-(U)$ such that $J_i\cap \delta_D^-(U)=\emptyset$. This implies $F_i\cap \delta_{\vec{G}}^+(U)=\emptyset$, contradicting the fact that $F_i$ is a strongly connected digraph of $\vec{G}$. Moreover, since $w^D_{e^-}=1$, at most one of $F_1,...,F_k$ uses $e^-$, $\forall e^-\in E^-$. Thus, at most one of $J_1,...,J_k$ uses $e^+, \forall e^+\in E^+=A$. Therefore, $J_1,...,J_k$ are disjoint dijoins of $D$. 
    
    Conversely, let $J_1,...,J_k$ be $k$ disjoint dijoins in $D$.
    Let $F_i:=A\cup J_i^{-1}$ for every $i\in [k]$. Observe that each $F_i$ is a strongly connected digraph. This is because every dicut of $A$ has nonempty intersection with the dijoin $J_i$ and thus is no longer a dicut after adding $J_i^{-1}$. It follows from $J_1,...,J_k$ being disjoint and the definition of $w^D$ that $F_1,...,F_k$ is a valid packing in $\vec{G}$ with weight $w^D$.
\end{proof}


\begin{theorem}\label{thm:decompose_SCD_weightD}
     Let $D=(V,A)$ be a digraph with minimum dicut size $\tau$. If the underlying undirected graph $G$ admits a nowhere-zero circular $k$-flow for some rational number $k\geq 2$, then the digraph $\vec{G}$ with weight $w^D$ can pack two $\rounddown{\frac{\tau}{k}}$-SCD's.
\end{theorem}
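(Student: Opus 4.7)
The plan is to construct explicit integral vectors $x, y \in \mathbb{Z}^{E^+\cup E^-}_+$ with $x + y = w^D$, both of which are $\lfloor\tau/k\rfloor$-SCD's. The naive split $x = y = w^D/2$ would yield two $\lfloor\tau/2\rfloor$-SCD's if it were integral, but $w^D_{e^-}=1$ is odd and, when $\tau$ is odd, so is $w^D_{e^+}=\tau$. To round consistently, I would invoke Lemma~\ref{lem:key} to convert the assumed nowhere-zero circular $k$-flow of $G$ into a $k$-cut-balanced orientation $F^+$, and set $x_a := \lceil w^D_a/2\rceil$ for $a\in F^+$ and $x_a := \lfloor w^D_a/2\rfloor$ for $a\notin F^+$, with $y := w^D - x$. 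By construction $(x,y)$ is an integral packing with $x+y=w^D$.

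To verify both vectors are $\lfloor\tau/k\rfloor$-SCD's, I would fix $\emptyset\neq U\subsetneq V$ and split into two cases. If $\delta^+_D(U)\neq\emptyset$, then $\delta^+_{\vec{G}}(U)$ contains at least one arc of $E^+$ of weight $\tau$, which contributes at least $\lfloor\tau/2\rfloor\geq\lfloor\tau/k\rfloor$ to each of $x$ and $y$ (since $k\geq 2$), regardless of which side of the rounding it falls on. If instead $\delta^+_D(U)=\emptyset$, then $\delta^-_D(U)$ is a dicut of $D$, so $|\delta_G(U)|=|\delta^-_D(U)|\geq\tau$ and $\delta^+_{\vec{G}}(U)$ consists only of weight-$1$ arcs from $E^-$. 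On such arcs $x_a=1$ exactly when $a\in F^+$ and $y_a=1$ exactly when $a\notin F^+$, so $x(\delta^+_{\vec{G}}(U))=|\delta^+_{\vec{G}}(U)\cap F^+|$ and $y(\delta^+_{\vec{G}}(U))=|\delta^+_{\vec{G}}(U)\setminus F^+|$; the $k$-cut-balance of $F^+$ lower-bounds each of these by $|\delta_G(U)|/k\geq\tau/k$, hence by $\lceil\tau/k\rceil\geq\lfloor\tau/k\rfloor$ by integrality.

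The main care point is the translation between the orientation $F^+$ on the undirected graph $G$ and the multiset $\delta^+_{\vec{G}}(U)$ in the doubled digraph, combined with the asymmetric weights $\tau$ and $1$ on $E^+$ and $E^-$. The heavy-arc case is essentially free from the minimum-dicut hypothesis, and the cut-balance is only needed on the cuts that face a dicut, where one cannot afford to waste any of the weight-$1$ arcs of $E^-$. It is there that the factor $k$ in $\lfloor\tau/k\rfloor$ appears, through the two-sided cut-balance inequality applied simultaneously to $F^+$ and $F^-=(F^+)^{-1}$.
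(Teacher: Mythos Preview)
Your proposal is correct and follows essentially the same approach as the paper: the construction $x_a=\lceil w^D_a/2\rceil$ for $a\in F^+$ and $x_a=\lfloor w^D_a/2\rfloor$ otherwise is exactly the paper's rounding rule. The only difference is cosmetic: the paper splits cases on whether $|\delta_G(U)|\geq\tau$ (using cut-balance in the large case and a heavy arc in the small case), whereas you split on whether $\delta^+_D(U)\neq\emptyset$ (using a heavy arc directly) or $\delta^+_D(U)=\emptyset$ (forcing a dicut, hence $|\delta_G(U)|\geq\tau$, then cut-balance); both partitions cover the same ground and use the same two mechanisms.
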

\begin{proof}
    Let $G=(V,E)$ be the underlying undirected graph of $D$. Let $E^+$ and $f:E^+\rightarrow [1,k-1]$ be a nowhere-zero circular $k$-flow of $G$. Let $E^-$ be obtained by reversing the arcs of $E^+$. Let $\vec{G}=(V,E^+\cup E^-)$. Then, both $E^+$ and $E^-$ are $k$-cut-balanced orientations of $G$ by Lemma \ref{lem:key}. 
    Construct $x\in \Z^{E^+\cup E^-}$ as follows.
        \begin{equation*}
            \begin{aligned}
                x_e=\begin{cases}
                    \roundup{\frac{\tau}{2}}, & e\in A\cap E^+\\
                    \rounddown{\frac{\tau}{2}}, & e\in A\cap E^-\\
                    1, & e\in A^{-1}\cap E^+\\
                    0, & e\in A^{-1}\cap E^-\\
                \end{cases},
            \end{aligned}
            \text{ and equivalently }
            \begin{aligned}
                (w^D-x)_e=\begin{cases}
                    \rounddown{\frac{\tau}{2}}, & e\in A\cap E^+\\
                    \roundup{\frac{\tau}{2}}, & e\in A\cap E^-\\
                    0, & e\in A^{-1}\cap E^+\\
                    1, & e\in A^{-1}\cap E^-\\
                \end{cases}.
            \end{aligned}
        \end{equation*}
    We prove that both $x$ and $(w^D-x)$ are $\rounddown{\frac{\tau}{k}}$-SCD's. Let $\emptyset \neq U \subsetneqq V$. We discuss two cases.
    
    Suppose $|\delta_G(U)|\geq \tau$. Then, $x(\delta_{\vec{G}}^+(U))\geq x(\delta_{E^+}^+(U)) \geq |\delta_{E^+}^+(U)|\geq \frac{1}{k}|\delta_{G}(U)|\geq \frac{\tau}{k}$, where the second inequality follows from the fact that $x_e\geq 1,~\forall e\in E^+$, and the third inequality follows from the fact that $E^+$ is $k$-cut-balanced.
    On the other hand, $(w^D-x)_e\geq 1,~\forall e\in\delta_{E^-}^+(U)$. Therefore, $(w^D-x)(\delta_{\vec{G}}^+(U))\geq (w^D-x)(\delta_{E^-}^+(U))\geq  |\delta_{E^-}^+(U)|\geq \frac{1}{k}|\delta_{G}(U)|\geq \frac{\tau}{k}$, where the third inequality follows from the fact that $E^-$ is $k$-cut-balanced. 
        
    Suppose $|\delta_G(U)|< \tau$. Then, $\delta^-_D(U)$ is not a dicut, i.e., $\delta_D^+(U)\neq \emptyset$, which means  $\delta_{\vec{G}}^+(U)\cap A\neq \emptyset$. Therefore, $x(\delta_{\vec{G}}^+(U))\geq x(\delta_{\vec{G}}^+(U)\cap A)\geq \rounddown{\frac{\tau}{2}}\geq \rounddown{\frac{\tau}{k}}$ since $k\geq 2$. Also, $(w^D-x)(\delta_{\vec{G}}^+(U))\geq (w^D-x)(\delta_{\vec{G}}^+(U)\cap A)\geq \rounddown{\frac{\tau}{2}}\geq \rounddown{\frac{\tau}{k}}$ since $k\geq 2$. Therefore, both $x$ and $(w^D-x)$ are $\rounddown{\frac{\tau}{k}}$-SCD's.
\end{proof}

\subsection*{Proof of Theorem~\ref{thm:pack_dijoins}}
From Proposition~\ref{prop:reformulation_SCD}, given a digraph $D$, we can reduce packing dijoins of $D$ into packing strongly connected digraphs of the augmented digraph $\vec{G}$ with weight $w^D$ which is $\tau$-strongly-connected.
Note that using Corollary~\ref{cor:edmonds}, this digraph can pack $\tau$ in $r$-arborescences, or $\tau$ out $r$-arborescences. If we pair each in $r$-arborescence with an out $r$-arborescence, we will obtain $\tau$ strongly connected digraphs. However, each arc can be used in both in and out $r$-arborescences. Shepherd and Vetta \cite{shepherd2005visualizing} use this idea to obtain a half integral packing of dijoins of value $\frac{\tau}{2}$. Yet, finding disjoint in and out arborescences together is quite challenging. As we noted earlier, it is open whether there exists a $\tau$ such that every $\tau$-strongly-connected digraph can pack one in-arborescence and one out-arborescence \cite{bang2009disjoint}.  

Theorem \ref{thm:decompose_SCD_weightD} paves a way to approximately packing disjoint in and out arborescences. Fixing a root $r$, if we are able to decompose the graph into two $\tau'$-strongly-connected graphs and thereby find $\tau'$ disjoint in $r$-arborescences in the first graph and $\tau'$ disjoint out $r$-arborescences in the second graph, then we can combine them to get $\tau'$ strongly connected digraphs. 

\begin{proof}[Proof of Theorem \ref{thm:pack_dijoins}]
    By Proposition \ref{prop:reformulation_SCD}, it suffices to prove that $\vec{G}$ with weight $w^D$ can pack $\rounddown{\frac{\tau}{k}}$ strongly connected digraphs. By Theorem \ref{thm:decompose_SCD_weightD}, $\vec{G}$ with weight $w^D$ can pack two weighted digraphs, $J_1$ and $J_2$, such that each of them is $\rounddown{\frac{\tau}{k}}$-strongly-connected. Fixing an arbitrary root $r$, since a $\rounddown{\frac{\tau}{k}}$-strongly-connected digraph is in particular rooted-$\rounddown{\frac{\tau}{k}}$-connected, by Theorem~\ref{thm:edmonds}, $J_1$ can pack $\rounddown{\frac{\tau}{k}}$ out $r$-arborescences $S_1,...,S_{\rounddown{\frac{\tau}{k}}}$. 
    Similarly, $J_2$ can pack $\rounddown{\frac{\tau}{k}}$ in $r$-arborescences $T_1,...,T_{\rounddown{\frac{\tau}{k}}}$. Let $F_i:=S_i\cup T_i$, for $i=1,...,\rounddown{\frac{\tau}{k}}$. Each $F_i$ is a strongly connected digraph. This is because every out $r$-cut $\delta_{\vec{G}}^+(U), r\in U$ is covered by $S_i$ and every in $r$-cut $\delta_{\vec{G}}^+(U), r\notin U$ is covered by $T_i$ and thus every cut $\delta_{\vec{G}}^+(U)$ is covered by $F_i$. Therefore, $F_1,...,F_{\rounddown{\frac{\tau}{k}}}$ forms a packing of strongly connected digraphs under weight $w^D$.
\end{proof}

Theorem~\ref{thm:approx1} now follows by combining Theorem~\ref{thm:pack_dijoins} and Theorem \ref{thm:6-flow} and noting that the underlying undirected graph of a digraph with minimum dicut size $\tau\geq 2$ is $2$-edge-connected. By Theorem \ref{thm:6-flow}, the nowhere-zero $6$-flow can be found in polynomial time, and thus the decomposition described in Theorem \ref{thm:decompose_SCD_weightD} can be done in polynomial time. Moreover, further decomposing $J_1$ and $J_2$ into in and out $r$-arborescences can also be done in polynomial time due to Theorem \ref{thm:edmonds}. Thus, we can find $\rounddown{\frac{\tau}{6}}$ disjoint dijoins in polynomial time. 
Theorem~\ref{thm:approx2} now follows by combining Theorem~\ref{thm:pack_dijoins} and Theorem \ref{thm:6k-connected:circular flow}. However, as far as we know there is no constructive version of Theorem \ref{thm:6k-connected:circular flow}, which means Theorem \ref{thm:approx2} cannot be made algorithmic directly.



\section{A Reformulation of Woodall's Conjecture in terms of Strongly Connected Orientations}\label{sec:SCO}
In this section, we discuss the relation between packing dijoins, strongly connected orientations and strongly connected digraphs. We also discuss another reformulation of Woodall's Conjecture in terms of strongly connected orientations.

    Given an undirected graph $G=(V,E)$, let $\vec{G}=(V,E^+\cup E^-)$ be the digraph obtained by copying each edge of $G$ twice and orienting them oppositely. 
    An orientation of $G$ is a subset of $E^+\cup E^-$ consisting of exactly one of $e^+$ and $e^-$ for each $e\in E$. Note that $E^+$ itself is an orientation, so is $E^-$. An orientation $O$ is a strongly connected orientation (SCO) if $|\delta_O^+(U)|\geq 1$ for every $\emptyset \neq U\subsetneqq V$. Note that each strongly connected orientation is a strongly connected digraph.
Given a directed graph $D=(V,A)$, recall that a strengthening is a subset $J\subseteq A$ such that $(V,(A\setminus J)\cup J^{-1})$ is strongly connected. Note that each strengthening is a dijoin. 
Schrijver observed the following reformulation of Woodall's conjecture in terms of strengthenings in his unpublished note (\cite{schrijverobervation} Section 2).
\begin{theorem}[\cite{schrijverobervation}]\label{thm:eq_strenghthening}
    Woodall's conjecture is true if and only if in every digraph with minimum dicut size $\tau$, the arcs can be partitioned into $\tau$ strengthenings.
\end{theorem}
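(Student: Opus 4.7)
The plan has two directions. The forward direction ($\Leftarrow$) is immediate from the remark already made in the text that every strengthening is a dijoin: any partition of $A$ into $\tau$ strengthenings yields $\tau$ pairwise disjoint dijoins, which confirms Woodall's conjecture.

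For the reverse direction ($\Rightarrow$), my plan is to encode the desired partition as a decomposition of a carefully chosen nowhere-zero $\tau$-SCO and then invoke Theorem~\ref{thm:eq_Woodall}. Given $D=(V,A)$ with minimum dicut size $\tau\geq 2$ and underlying undirected graph $G$, I would form $\vec{G}=(V,E^+\cup E^-)$ with $E^+=A$, $E^-=A^{-1}$, and define $x\in\mathbb{Z}^{E^+\cup E^-}$ by $x_{e^+}=\tau-1$ and $x_{e^-}=1$. The first step is to verify that $x$ is a nowhere-zero $\tau$-SCO of $G$: the nowhere-zero property is immediate since both entries are at least $1$, and the inequality $x(\delta^+_{\vec G}(U))=(\tau-1)|\delta^+_A(U)|+|\delta^-_A(U)|\geq \tau$ for every $\emptyset\neq U\subsetneqq V$ follows from a brief case split on whether $\delta^+_A(U)$ and $\delta^-_A(U)$ are empty: if both are nonempty the total is at least $(\tau-1)+1=\tau$, whereas if one is empty the other is a dicut of $D$ of size at least $\tau$, whose contribution alone dominates $\tau$.

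Next I would apply the ``Woodall implies SCO-decomposition'' direction of Theorem~\ref{thm:eq_Woodall} to decompose $x$ into $\tau$ strongly connected orientations $F_1,\ldots,F_\tau$ of $G$ with $\sum_{i=1}^{\tau}\chi_{F_i}=x$. Because $x_{e^-}=1$ for every $e\in A$, each $e$ has a unique index $i(e)\in\{1,\ldots,\tau\}$ with $e^-\in F_{i(e)}$ and $e^+\in F_j$ for all $j\neq i(e)$. Setting $S_i:=\{e\in A : i(e)=i\}$ yields a partition $A=S_1\cup\cdots\cup S_\tau$ into disjoint pieces, and the identity $F_i=(A\setminus S_i)\cup S_i^{-1}$, combined with $F_i$ being a strongly connected orientation, immediately shows that each $S_i$ is a strengthening of $D$, as required.

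The main obstacle will be the reliance on the nontrivial Theorem~\ref{thm:eq_Woodall}: converting an SCO decomposition of $x$ into the partition is essentially bookkeeping, but obtaining the decomposition itself from Woodall's conjecture is the substantive step. A proof that bypasses Theorem~\ref{thm:eq_Woodall} would have to transform $\tau$ disjoint dijoins supplied by Woodall directly into $\tau$ strengthenings, which is delicate because strengthenings satisfy strictly stronger constraints than dijoins — at a dicut $U$ both $J$ and $A\setminus J$ must meet $\delta^+_A(U)$, and at non-dicut cuts $J$ must interact correctly with $\delta^-_A(U)$ — conditions that an arbitrary disjoint dijoin packing produced by Woodall need not satisfy.
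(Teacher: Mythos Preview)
Your proposal is correct and matches exactly what the paper does: the paper states Theorem~\ref{thm:eq_strenghthening} without proof (citing Schrijver's unpublished notes) and then, after proving Theorem~\ref{thm:eq_Woodall}, remarks explicitly that Schrijver's reformulation is the special case $\bar w^D_{e^+}=\tau-1$, $\bar w^D_{e^-}=1$ of that theorem, via the one-to-one correspondence between strengthenings of $D$ and SCO's of $G$. Your verification that $x\in P_1^\tau(G)$ and your bookkeeping translation between the SCO decomposition and the strengthening partition are precisely this special case spelled out, so there is nothing to add.
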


Fix an orientation $E^+$ and let $G^+=(V,E^+)$. There is a one-to-one mapping between SCO's of $G$ and strengthenings of $G^+$. Given a strengthening $J\subseteq E^+$, $(E^+\setminus J)\cup J^{-1}$ is a strongly connected orientation of $G$. Conversely, given a strongly connected orientation $O\subseteq E^+\cup E^-$, $E^+\setminus O$ is a strengthening of $G^+$.
Let $\chi_O\in \{0,1\}^{E^+\cup E^-}$ be the characteristic vector of an orientation $O$ of $G$ and let
     \[
        SCO(G):=\big\{x\in\{0,1\}^{E^+\cup E^-}\big|x=\chi_O \text{ for some strongly connected orientation $O$ of $G$}\big\}.
    \]
    Let $\chi_J\in \{0,1\}^{E^+}$ be the characteristic vector of a strengthening $J$ of $G^+$ and let
    \[
        STR(G^+):=\big\{x\in\{0,1\}^{E^+}\big|x=\chi_J \text{ for some strengthening $J$ of $G^+$}\big\}.
\]
Let $x\in\{0,1\}^{E^+}$. Observe that $x$ is the characteristic vector of a strengthening if and only if
\[\sum_{e\in \delta_{E^+}^-(U)} (1-x_e)+ \sum_{e\in \delta_{E^+}^+(U)} x_e \geq 1,\ \forall \emptyset \neq U\subsetneqq V,
\]
i.e.,
\[
x(\delta_{E^+}^-(U))-x(\delta_{E^+}^+(U))\leq |\delta_{E^+}^-(U)|-1, \forall \emptyset \neq U\subsetneqq V.
\]
One can easily verify that $|\delta_{E^+}^-(U)|-1$ is a crossing submodular function over the crossing family $2^V\setminus \{\emptyset, V\}$. Therefore, it follows from \Cref{thm:submodular-flow} that the convex hull of the strengthenings equals the following box-constrained submodular flow polytope (see also \cite{abdi2024arc}).
\begin{equation*}
    \begin{aligned}
        \conv(STR(G^+))=\big\{x\in\mathbb{R}^{E^+}\big|~&0\leq x_{e^+}\leq 1,~\forall e\in E,\\
        &x(\delta_{E^+}^-(U))-x(\delta_{E^+}^+(U))\leq |\delta_{E^+}^-(U)|-1, \forall \emptyset \neq U\subsetneqq V\big\}.
    \end{aligned}
\end{equation*}
As we hinted earlier, there is a linear transformation $L:STR(G^+)\rightarrow SCO(G), x\mapsto (x,\mathbf{1}-x)$. Therefore, $\conv(SCO(G))=\conv(L\cdot STR(G^+))=L\cdot\conv(STR(G^+))$, which equals the following.
    \begin{equation*}
    \begin{aligned}
        \conv(SCO(G))=\big\{x\in\mathbb{R}^{E^+\cup E^-}\big|~&x_{e^+}\geq 0,~ x_{e^-}\geq 0, ~\forall e\in E,\\
        &x_{e^+}+x_{e^-}=1,~\forall e\in E,\\
        &x(\delta^+(U))\geq 1, \forall \emptyset \neq U\subsetneqq V\big\}.
    \end{aligned}
    \end{equation*}

    As a generalization, a $\tau$-strongly-connected orientation ($\tau$-SCO) of $G$, where $\tau$ is a positive integer, is an integral vector in the following polytope
    \begin{equation}\label{eq:def_P0^tau}
    \begin{aligned}
        P_0^\tau(G):=\big\{x\in\mathbb{R}^{E^+\cup E^-}\big|~&x_{e^+}\geq 0,~ x_{e^-}\geq 0, ~\forall e\in E,\\
        &x_{e^+}+x_{e^-}=\tau,~\forall e\in E,\\
        &x(\delta^+(U))\geq \tau,~ \forall \emptyset \neq U\subsetneqq V\big\}.
    \end{aligned}
    \end{equation}
    Note that $P_0^1(G)=\conv(SCO(G))$, and $P_0^\tau(G)=\tau\conv(SCO(G))$ is just a scaling of $\conv(SCO(G))$, which is integral if $\tau$ is integral. Thus, $P_0^\tau(G)$ describes the convex hull of $\tau$-SCO's. Taking the union of $P_0^\tau(G)$ for all nonnegative $\tau$'s, we obtain
    \begin{equation}
    \begin{aligned}
        P_0(G):=\big\{x\in\mathbb{R}^{E^+\cup E^-}\big|~\exists \tau, ~s.t. ~&x_{e^+}\geq 0,~ x_{e^-}\geq 0, ~\forall e\in E,\\
        &x_{e^+}+x_{e^-}=\tau,~\forall e\in E,\\
        &x(\delta^+(U))\geq \tau,~ \forall \emptyset \neq U\subsetneqq V\big\}.
    \end{aligned}
    \end{equation}
Note that $P_0(G)=\cone(SCO(G))$ is the cone generated by all the strongly connected orientations of $G$. It is reasonable to ask if $P_0^1(G)$ has the integer decomposition property~\cite{baum1978integer}, i.e. every integral vector in $P_0^\tau(G)=\tau P_0^1(G)$ is a sum of $\tau$ integral vectors in $P_0^1(G)$, in other words, if $SCO(G)$ forms a Hilbert basis. A finite set of integral vectors $x_1,...,x_n$ forms a \textit{Hilbert basis} if every integral vector in $\cone(\{x_1,...,x_n\})$ is a nonnegative integral combination of $x_1,...,x_n$ (see e.g.~\cite{schrijver1998theory}). This is not the case due to the following equivalence.

\begin{theorem}\label{thm:eq_E-G}
    The Edmonds-Giles Conjecture \ref{conj:E-G} is true if and only if for every undirected graph $G$ and an integer $\tau>0$, every $\tau$-SCO can be decomposed into $\tau$ SCO's.
\end{theorem}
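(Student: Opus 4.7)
The plan is to prove both directions through the bijection between SCOs of $G$ (for a fixed orientation $E^+$) and strengthenings of the digraph $D_0 := (V, E^+)$: each SCO $z$ of $G$ corresponds to the strengthening $T := \{e \in E^+ : z_{e^-}=1\} \subseteq E^+$, whose flip makes $D_0$ strongly connected. Under this bijection, decomposing a $\tau$-SCO $x$ of $G$ into $\tau$ SCOs is exactly the problem of finding $\tau$ strengthenings $T_1, \ldots, T_\tau$ of $D_0$ with $\sum_i \chi_{T_i} = x|_{E^-}$.

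For the forward direction (E--G $\Rightarrow$ decomposition), set $y := x|_{E^-}$. The $\tau$-SCO inequalities $x(\delta^+_{\vec G}(U)) \geq \tau$ and $x(\delta^-_{\vec G}(U)) \geq \tau$, rewritten via $x_{e^+}+x_{e^-}=\tau$, become for every dicut $U$ of $D_0$ the two-sided bound
\[
\tau \;\leq\; y(\delta^+_{D_0}(U)) \;\leq\; \tau\bigl(|\delta^+_{D_0}(U)|-1\bigr).
\]
Hence both $(D_0, y)$ and $(D_0, \tau\mathbf{1}-y)$ have minimum weighted dicut at least $\tau$. Applying E--G to both weighted digraphs yields, on each side, $\tau$ dijoins summing coordinatewise to $\leq y$ and $\leq \tau\mathbf{1}-y$ respectively. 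Using the submodular-flow description of $\conv(STR(D_0))$ recalled before the theorem and the box-integrality in Theorem~\ref{thm:submodular-flow}, I would combine these two packings into a single $\tau$-tuple $T_1,\ldots,T_\tau$ in which each $T_i$ and each $E^+\setminus T_i$ are simultaneously dijoins of $D_0$, i.e.\ each $T_i$ is a strengthening, with $\sum_i \chi_{T_i} = y$ exactly. The passage from E--G's standard packing form ($\leq$) to this partition form ($=$) parallels Schrijver's argument behind Theorem~\ref{thm:eq_strenghthening}.

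For the backward direction (decomposition $\Rightarrow$ E--G), take $D=(V,A)$ with $w\in\{0,1\}^A$ and minimum $w$-dicut $\tau$; the cases $\tau\leq 1$ are trivial, so assume $\tau\geq 2$ (which forces the underlying undirected graph $G$ to be $2$-edge-connected). Set $E^+:=A$ and define $x$ by $x_{e^+}=\tau-w_e$ and $x_{e^-}=w_e$. A case analysis over proper $U\subsetneqq V$ verifies that $x$ satisfies $x(\delta^+_{\vec G}(U))\geq \tau$ using the dicut lower bound and 2-edge-connectivity; for the remaining borderline cuts (where $|\delta^+_D(U)|=1$ carries a weight-$1$ arc with no weight-$1$ incoming arc) a mild augmentation of $D$ by auxiliary $0$-weight arcs restores the $\tau$-SCO property without altering the set of dicuts of $D$ or their weights. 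The decomposition hypothesis then yields $\tau$ SCOs of $G$, which translate via the bijection into $\tau$ strengthenings $T_1,\ldots,T_\tau\subseteq A$ of $D$ with $\sum_i \chi_{T_i}=w$; since every strengthening is a dijoin, these $T_i$ form the desired packing of $\tau$ dijoins in $D$ under weight $w$.

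The hard part is the partition-form strengthening in the forward direction: extracting $\tau$ strengthenings (equivalently, $\tau$ dijoins whose complements in $E^+$ are also dijoins) that sum \emph{exactly} to $y$, starting only from E--G's ``$\leq$'' packings on $(D_0,y)$ and $(D_0,\tau\mathbf{1}-y)$. This is precisely where the two-sided dicut bound and the box-integral submodular-flow description of $\conv(STR(D_0))$ must be used in earnest, analogously to Schrijver's reformulation of Woodall's conjecture in Theorem~\ref{thm:eq_strenghthening}.
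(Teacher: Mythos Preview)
Your backward direction (decomposition $\Rightarrow$ E--G) is essentially the paper's approach: you set $x_{e^+}=\tau-w_e$, $x_{e^-}=w_e$, which matches the paper's choice. The problematic case is indeed the cut with a single weight-$1$ outgoing arc and only weight-$0$ incoming arcs. Your fix (``a mild augmentation by auxiliary $0$-weight arcs'') is too vague to count as a proof; the paper handles this instead by proving a structural lemma (Lemma~\ref{lemma:near_dicut}) showing such an arc lies in no minimum dicut, so its weight can be set to $0$ without lowering $\tau$. This is a cleaner and fully specified reduction.

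Your forward direction (E--G $\Rightarrow$ decomposition) has a genuine gap. First, your characterization ``each $T_i$ and each $E^+\setminus T_i$ are simultaneously dijoins of $D_0$, i.e.\ each $T_i$ is a strengthening'' is incorrect: being a strengthening requires the flipped digraph to be strongly connected, hence imposes a condition on \emph{every} cut of $D_0$, not only on dicuts. (Take $D_0$ a directed triangle: it has no dicuts, so every $T$ and its complement are dijoins vacuously, yet flipping a single arc does not yield a strongly connected digraph.) Second, and more seriously, your plan is to apply E--G separately to $(D_0,y)$ and $(D_0,\tau\mathbf 1-y)$ to get two dijoin packings and then ``combine'' them via box-integrality of $\conv(STR(D_0))$. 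Box-integrality gives you integrality of the polytope, not the integer decomposition property; it does not tell you how to merge two packings of dijoins into $\tau$ strengthenings that sum \emph{exactly} to $y$. You yourself flag this as ``the hard part'' but never carry it out, and there is no evident mechanism for doing so with the tools you list.

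The paper avoids this difficulty entirely by taking a different route: given an integral $x\in P_0^\tau(G)$, it builds an auxiliary weighted digraph $D$ by subdividing each edge $e=\{u,v\}$ with a new sink node $w_e$, adding $x_{e^+}$ weight-$1$ arcs $u\to w_e$ and $x_{e^-}$ weight-$1$ arcs $v\to w_e$ (with a weight-$0$ arc on the side where $x$ is $0$). It then checks directly that the minimum weighted dicut of $D$ equals $\tau$, applies E--G once to $D$ to obtain $\tau$ disjoint dijoins, and reads off the SCO $O_i$ of $G$ from which side of each $w_e$ the dijoin $J_i$ hits. The equality $|\delta_D^-(\{w_e\})|=\tau$ forces each $J_i$ to hit exactly one side, so $\sum_i\chi_{O_i}=x$ automatically. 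This construction sidesteps strengthenings and any need to ``combine'' packings.
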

The counterexample (in Figure~\ref{fig:strongly_connected_orientation}) to the Edmonds-Giles Conjecture \ref{conj:E-G} discovered by Schrijver  \cite{schrijver1980counterexample} can be translated to disprove the statement that every $\tau$-SCO can be decomposed into $\tau$ SCO's. Let $x\in \Z^{E^+\cup E^-}$ be defined by $x_{e}=1$ if $e$ is solid, $x_e=2$ if $e$ is dashed, and $x_e=0$ for the reverse of the dashed arcs (which we do not draw here). The vector $x$ is a $2$-SCO but it cannot be decomposed into $2$ strongly connected orientations.
\begin{figure}[htbp]
	\centering
	\includegraphics[scale=0.15]{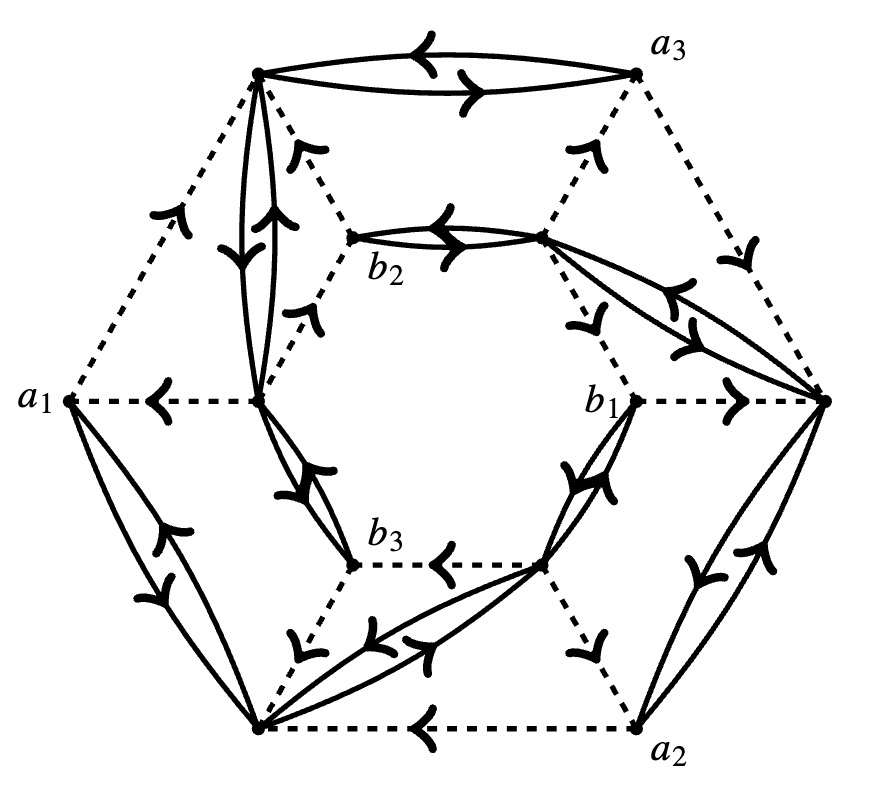}
 \caption{The solid arcs with weight $1$ and dashed arcs with weight $2$ cannot be decomposed into $2$ SCO's $O_1,O_2$. Assume for a contradiction that $O_1,O_2$ exist. The dashed arcs have their orientation fixed in both $O_i$. Three paths consisting of solid arcs between $a_i,b_i$ have to be directed paths in both $O_i$, otherwise there is a trivial dicut along the paths in some $O_i$. Both $O_i$ need to enter the inner hexagon from the outer hexagon, which means each $O_i$ should have at least one directed path oriented as $a_i\rightarrow b_i$. Thus, one $O_i$ has exactly one directed path oriented as $b_i\rightarrow a_i$ and two oriented as $a_i\rightarrow b_i$. Assume $O_1$ has orientation $b_1\rightarrow a_1, a_2\rightarrow b_2$ and $a_3\rightarrow b_3$. This leaves no arc to go from the left half to the right half of the graph, a contradiction to $O_1$ being an SCO.}
 \label{fig:strongly_connected_orientation}
\end{figure}

However, slightly revising the statement, we obtain an equivalent form of Woodall's conjecture \ref{conj:Woodall}, which is still of interest. A nowhere-zero $\tau$-strongly connected orientation (nowhere-zero $\tau$-SCO) of $G$ is an integral vector in the following polytope:
\begin{equation}
    \begin{aligned}
        P_1^\tau(G):=\big\{x\in\mathbb{R}^{E^+\cup E^-}\big|~&x_{e^+}\geq 1,~ x_{e^-}\geq 1, ~\forall e\in E,\\
        &x_{e^+}+x_{e^-}=\tau,~\forall e\in E,\\
        &x(\delta^+(U))\geq \tau,~ \forall \emptyset \neq U\subsetneqq V\big\}.
    \end{aligned}
\end{equation}
The only difference between $P_0^\tau(G)$ and $P_1^\tau(G)$ is that each entry of the integral vectors in $P_1^\tau(G)$ is nonzero. Note that $P_1^\tau(G)$ can be obtained from the following box-constrained submodular flow via a linear transformation $x\mapsto (x,\tau\cdot\mathbf{1}-x)$.
\begin{equation*}
    \begin{aligned}
        \big\{x\in\mathbb{R}^{E^+}\big|~&1\leq x_{e^+}\leq \tau-1,~\forall e\in E,\\
        &x(\delta_{E^+}^-(U))-x(\delta_{E^+}^+(U))\leq \tau(|\delta_{E^+}^-(U)|-1), \forall \emptyset \neq U\subsetneqq V\big\}.
    \end{aligned}
\end{equation*}
It follows from \Cref{thm:submodular-flow} that $P_1^\tau(G)$ is integral.
Similarly, define
    \begin{equation}\label{eq:def_P1}
    \begin{aligned}
        P_1(G):=\big\{x\in\mathbb{R}^{E^+\cup E^-}\big|~\exists \tau, ~s.t. ~&x_{e^+}\geq 1,~ x_{e^-}\geq 1, ~\forall e\in E,\\
        &x_{e^+}+x_{e^-}=\tau,~\forall e\in E,\\
        &x(\delta^+(U))\geq \tau,~ \forall \emptyset \neq U\subsetneqq V\big\}.
    \end{aligned}
    \end{equation}

We have the following reformulation of Woodall's conjecture.

\begin{theorem}\label{thm:eq_Woodall}
    Woodall's Conjecture \ref{conj:Woodall} is true if and only if for every undirected graph $G$ and an integer $\tau>0$, every nowhere-zero $\tau$-SCO can be decomposed into $\tau$ SCO's.
\end{theorem}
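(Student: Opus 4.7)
Both directions go through the one-to-one correspondence (stated just before the theorem) between SCOs of $G$ and strengthenings of $G^+=(V,E^+)$, together with Schrijver's strengthening reformulation of Woodall (Theorem~\ref{thm:eq_strenghthening}).

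\textbf{Reverse direction.} Given $D=(V,A)$ with minimum dicut size $\tau\geq 2$, identify $E^+:=A$ and define $x\in\Z^{E^+\cup E^-}$ by $x_{e^+}:=\tau-1$ and $x_{e^-}:=1$. The nowhere-zero and sum-to-$\tau$ conditions are immediate, and the cut inequality $x(\delta^+_{\vec{G}}(U))=(\tau-1)|\delta^+_A(U)|+|\delta^-_A(U)|\geq\tau$ follows from a short case analysis on $|\delta^+_A(U)|$: when $\delta^+_A(U)=\emptyset$, $\delta^-_A(U)$ is a dicut of $D$ and so $|\delta^-_A(U)|\geq\tau$; the case $(|\delta^+_A(U)|,|\delta^-_A(U)|)=(1,0)$ is ruled out because $\tau\geq 2$; the remaining cases are direct. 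Applying the hypothesis, write $x=y_1+\cdots+y_\tau$ with each $y_i$ an SCO and set $J_i:=\{e^+\in A:y_{i,e^-}=1\}$. The identity $\sum_i y_{i,e^-}=x_{e^-}=1$ shows the $J_i$ partition $A$; each is a strengthening of $D$ because flipping $J_i$ yields the orientation encoded by $y_i$, which is strongly connected. Theorem~\ref{thm:eq_strenghthening} then delivers Woodall.

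\textbf{Forward direction.} Given a nowhere-zero $\tau$-SCO $x$ of $G$, fix an orientation $E^+$ and set $z\in\Z^{E^+}$ by $z_e:=x_{e^-}\in[1,\tau-1]$. Via the bijection, decomposing $x$ into $\tau$ SCOs is equivalent to finding strengthenings $J_1,\ldots,J_\tau$ of $G^+$ with $\sum_i\chi_{J_i}=z$. Applying the SCO inequality to $V\setminus U$ gives $z/\tau\in\conv(STR(G^+))$ (using the strengthening polytope description derived earlier in the section). I would construct the multi-digraph $D_z$ on $V$ containing $z_e$ parallel copies of $e^+$ for each $e\in E$. The polytope inequality forces every dicut of $D_z$ to have size at least $\tau$, and by choosing $E^+$ so that a cut attaining $x(\delta^+_{\vec{G}}(U))=\tau$ has all its edges oriented outward, one arranges for the minimum dicut of $D_z$ to equal $\tau$. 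Applying Woodall through Theorem~\ref{thm:eq_strenghthening} to $D_z$ partitions its arcs into $\tau$ strengthenings $\tilde J_1,\ldots,\tilde J_\tau$ of $D_z$.

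\textbf{Main obstacle.} The crux is to upgrade this into a partition respecting the multi-arc structure: each $\tilde J_i$ could contain several copies of the same $e^+$ (or none), whereas for an SCO decomposition we need exactly $0$ or $1$ copy per $i$ and per $e$, so that the projection $J_i:=\{e\in E^+:\tilde J_i\text{ contains a copy of }e^+\}$ satisfies $\sum_i\chi_{J_i}=z$. I would aim to produce such a \emph{balanced} partition either by local exchange moves (transferring a surplus copy from some $\tilde J_i$ to a deficient $\tilde J_j$ and verifying that both remain strengthenings of $D_z$ by using the non-flipped copies to preserve strong connectivity after the swap), or by encoding balanced partitions as integer points of a suitable box-constrained submodular-flow polyhedron and invoking its integrality (Theorem~\ref{thm:submodular-flow}), with the uniform fractional solution $(z/\tau,\ldots,z/\tau)$ certifying nonemptiness. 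This balanced-partition step is where the full strength of Woodall's hypothesis is genuinely consumed, and making it rigorous is the main technical effort.
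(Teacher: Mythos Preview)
Your reverse direction is correct and matches the paper's argument essentially verbatim.

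The forward direction has a genuine gap beyond the balanced-partition obstacle you flag. Even if you obtain a partition of $D_z$ into $\tau$ strengthenings $\tilde J_1,\ldots,\tilde J_\tau$ of $D_z$, each using at most one copy of every arc, the projection $J_i$ need not be a strengthening of $G^+$. If $e^+\in\tilde J_i$ and $z_e\geq 2$, then flipping $\tilde J_i$ inside $D_z$ still leaves $z_e-1\geq 1$ unflipped copies of $e^+$ alongside one copy of $e^-$, so the flipped $D_z$ carries \emph{both} orientations of $e$; flipping $J_i$ inside $G^+$ leaves only $e^-$. Strong connectivity of the former says nothing about the latter, so the passage through $D_z$ discards exactly the information you need. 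Your submodular-flow fallback does not rescue this: the set of tuples $(J_1,\ldots,J_\tau)$ of $G^+$-strengthenings summing to $z$ is an intersection of $\tau$ box-constrained submodular-flow polytopes with an affine coupling constraint, and integrality of that intersection is precisely the integer-decomposition statement you are trying to establish, so the argument becomes circular. (There is also a secondary issue: an integral point of $P_1^\tau(G)$ need not attain $x(\delta_{\vec{G}}^+(U))=\tau$ for any $U$---take $G=K_4$, $\tau=2$, $x\equiv 1$---so $D_z$ may have no dicut of size $\tau$, and no choice of $E^+$ repairs this.)

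The paper handles this direction with a different construction. Given integral $x\in P_1^\tau(G)$, it builds a digraph $D$ by replacing each edge $e=\{u,v\}$ with a new sink vertex $w_e$, adding $x_{e^+}\geq 1$ parallel arcs $u\to w_e$ and $x_{e^-}\geq 1$ parallel arcs $v\to w_e$. The crucial feature is that $\delta_D^-(\{w_e\})$ is a dicut of size exactly $x_{e^+}+x_{e^-}=\tau$, so each of the $\tau$ disjoint dijoins supplied by Woodall meets it in exactly one arc; this forces every dijoin to make a genuine binary orientation choice at every edge, and one then checks directly that the resulting orientations $O_i$ are strongly connected and sum to $x$. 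This tight-dicut gadget at each edge is the idea your plan is missing.
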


We will first prove Theorem \ref{thm:eq_Woodall} and modify the proof to prove Theorem \ref{thm:eq_E-G}. Our proof of Theorem \ref{thm:eq_Woodall} is inspired by Schrijver's Theorem \ref{thm:eq_strenghthening}. Schrijver's reformulation essentially covers the special case when $\bar{w}^D\in\Z^{E^+\cup E^-}$ with $\bar{w}^D_{e^+}=\tau-1,\forall e^+\in E^+$ and $\bar{w}^D_{e^-}=1,\forall e^-\in E^-$ in Theorem \ref{thm:eq_Woodall} (one can easily verify that $\bar{w}^D$ is a nowhere-zero $\tau$-SCO). Indeed, in  digraph $D=(V,A)$, let $E^+=A$, $E^-=A^{-1}$. It follows from the one-to-one mapping between strengthenings and SCO's that $A$ can be partitioned into $\tau$ strengthenings if and only if $\bar{w}$ can be decomposed into $\tau$ SCO's.  We generalize the weights to be any nowhere-zero $\tau$-SCO of $D$, and thus give a stronger consequence of Woodall's conjecture. By allowing the entries of a $\tau$-SCO to take $0$ values, we give an equivalent statement of the Edmonds-Giles conjecture in Theorem~\ref{thm:eq_E-G}, showing a contrast between the two conjectures.

\begin{proof}[Proof of Theorem \ref{thm:eq_Woodall}]
    We first prove the ``if'' direction. Let $D=(V,A)$ be a digraph (e.g. Figure~\ref{fig:sco_equivalence}-(1)) whose underlying undirected graph is $G=(V,E)$. Let $\tau$ be the size of a minimum dicut of $D$. We assume $\tau\geq 2$ w.l.o.g. and this implies that the size of minimum cut of $D$ is also greater than or equal to $2$. By making two copies of each edge of $G$ and orienting them oppositely, we obtain $\vec{G}=(V,E^+\cup E^-)$. For convenience we will assume that $E^+=A$ and $E^-=A^{-1}$. Take $x\in\Z^{E^+\cup E^-}$ such that $x_{e^+}=\tau-1$, $x_{e^-}=1$ for every $e\in E$ (as shown in Figure~\ref{fig:sco_equivalence}-(2)). We claim that $x\in P_1^\tau(G)$. The only nontrivial constraint to prove is $x(\delta_{\vec{G}}^+(U))\geq \tau$ for every $\emptyset \neq U\subsetneqq V$. If $\delta_D^-(U)$ is a dicut such that $\delta_D^+(U)=\emptyset$, then $x(\delta_{\vec{G}}^+(U))=x(\delta_{E^-}^+(U))=|\delta_D^-(U)|\geq \tau$. Otherwise, $\delta_D^+(U)\neq\emptyset$ and thus $\delta_{\vec{G}}^+(U)$ contains at least one arc in $E^+$. Moreover, since $|\delta_D(U)|\geq 2$, one has $x(\delta_{\vec{G}}^+(U))=x(\delta_{E^+}^+(U))+x(\delta_{E^-}^+(U))\geq (\tau-1)+1=\tau$. Thus, $x\in P_1^\tau(G)$. By the assumption, $x=\sum_{i=1}^{\tau}\chi_{O_i}$ where each $O_i$ is a strongly connected orientation.
    Take $J_i=\{e^+\in E^+ \mid \chi_{O_i}(e^-)=1, e^-\in E^-\}$. Note that $(A\setminus J_i)\cup (J_i^{-1})=O_i$. Therefore, $(A\setminus J_i)\cup (J_i^{-1})$ is strongly connected, which means $J_i$ is a strengthening of $D$, and thus a dijoin of $D$. Since $x_{e^-}=1$ for each $e\in E$, $J_i$'s are disjoint. Thus we get $\tau$ disjoint dijoins of $D$.
\begin{figure}[htbp]
	\centering
	\includegraphics[scale=0.23]{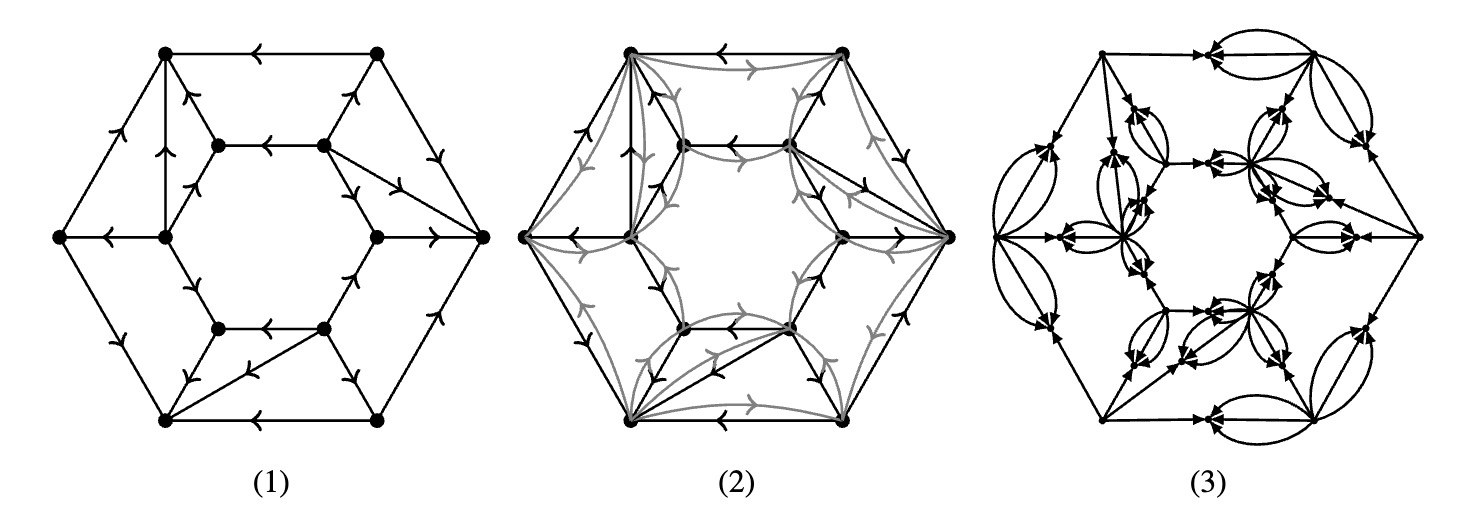}
 \caption{(1) is a digraph with minimum dicut size $4$. In (2) the weights of black arcs are $3$ and the weights of gray arcs are $1$. This figure illustrates how to convert from a digraph $D$ (1) to a weighted digraph $\vec{G}$ (2) in the first part of the proof of Theorem \ref{thm:eq_Woodall} and how to convert from a weighted digraph $\vec{G}$ (2) to a digraph $D$ (3) in the second part of the proof of Theorem \ref{thm:eq_Woodall}.}
 \label{fig:sco_equivalence}
\end{figure}

We now prove the ``only if'' direction. Given an undirected graph $G=(V,E)$, consider the corresponding directed graph $\vec{G}=(V,E^+\cup E^-)$ with each edge of $E$ copied and oppositely oriented. For an integral $x\in P_1^\tau(G)$, (e.g. Figure~\ref{fig:sco_equivalence}-(2)) construct a new digraph $D$ from $G$ in the following way. For each edge $e=\{u,v\}\in E$ where $e^+=(u,v)$ and $e^-=(v,u)$, add a node $w_e$, add $x_{e^+}\geq 1$ arcs from $u$ to $w_{e}$ and $x_{e^-}\geq 1$ arcs from $v$ to $w_{e}$, and delete $e$ (as shown in Figure~\ref{fig:sco_equivalence}-(3)).
    We claim that the size of a minimum dicut of $D$ is $\tau$. Every vertex $w_e$ induces a dicut $\delta^-_D(\{w_e\})$ of size $x_{e+}+x_{e^-}=\tau$. Thus, we only need to show that the size of every dicut of $D$ is at least $\tau$. Given $U$ such that $\delta_D^-(U)=\emptyset$, if there exists $e=\{u,v\}\in E$ such that $u,v\in U$ but $w_e\notin U$, then $|\delta_D^+(U)|\geq |\delta_D^-(\{w_e\})|\geq \tau$. Thus, we may assume w.l.o.g. that for every $e=\{u,v\}\in E$ such that $u,v\in U$, we also have $w_e\in U$. Since $\delta^-_D(U)=\emptyset$, for every $e=\{u,v\}\in E$ such that $u,v\notin U$, we also have $w_e\notin U$. Moreover, for every $e=\{u,v\}\in E$ such that $u\in U$, $v\notin U$, since there is at least an arc from $v$ to $w_e$ but $\delta_D^-(U)=\emptyset$, we infer that $w_e\notin U$. Thus, $\delta_D^+(U)=\{uw_e\mid e=\{u,v\}\in E, u\in U, v\notin U\}$. Thus, by the way we construct $D$, $|\delta_D^+(U)|\geq x(\delta_{\vec{G}}^+(U))\geq \tau$. Therefore, $D$ has minimum dicut size $\tau$. By Woodall's conjecture, there exists $\tau$ disjoint dijoins $J_1,...,J_\tau$ in $D$. In particular, each dijoin intersects dicut $\delta^-_D(\{w_e\})$ exactly once   
 since $|\delta^-_D(\{w_e\})|=\tau$. Let $O_i$ be an orientation defined by $O_i:=\{e^+\mid uw_e\in J_i\}\cup \{e^-\mid vw_e\in J_i\}$. Note that $O_i$ is indeed an orientation since exactly one of $uw_e$ and $vw_e$ is in $J_i$, for every $e\in E$. We claim that each $O_i$ is a strongly connected orientation of $G$. Assume not. Then there exists $U\subseteq V$, such that $\delta_{\vec{G}}^+(U)\cap O_i=\emptyset$. Let $U':=U\cup \{w_e\mid e=\{u,v\}\in E, u,v\in U\}$. It is easy to see that $U'$ is a dicut of $D$ such that $\delta_D^-(U')=\emptyset$. It follows from $\delta_{\vec{G}}^+(U)\cap O_i=\emptyset$ that $\delta_D^+(U')\cap J_i=\emptyset$, a contradiction to $J_i$ being a dijoin of $D$. Moreover, by the way we construct $D$, for each $e^+=(u,v)$, $\sum_{i=1}^{\tau}\chi_{O_i}(e^+)=|\{J_i\mid uw_e\in J_i\}|=x_{e^+}$. For each $e^-=(v,u)$, $\sum_{i=1}^{\tau}\chi_{O_i}(e^-)=|\{J_i\mid vw_e\in J_i\}|=x_{e^-}$. Therefore, $\sum_{i=1}^{\tau}\chi_{O_i}=x$. This ends the proof of this direction.
\end{proof}

It is implicit in the proof of Theorem \ref{thm:eq_Woodall} that the equivalence also holds true when restricting to a specific $\tau$:  Woodall's conjecture is true for every digraph with minimum size of a dicut $\tau$ if and only if for every undirected graph $G$, every vector $x\in P_1^\tau(G)$ can be decomposed into $\tau$ strongly connected orientations. A direct consequence of Theorem \ref{thm:eq_Woodall} is the following, which has been noted by A. Frank (see e.g. \cite{schrijver2003combinatorial} Theorem 56.3).
\begin{corollary}\label{cor:tau=2}
    Woodall's conjecture is true for $\tau=2$.
\end{corollary}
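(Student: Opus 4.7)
The plan is to invoke the $\tau$-specific version of Theorem~\ref{thm:eq_Woodall} noted in the paragraph immediately following its proof: Woodall's conjecture holds for digraphs of minimum dicut size $\tau$ if and only if, for every undirected graph $G$, every integral vector in $P_1^\tau(G)$ decomposes into $\tau$ strongly connected orientations. I specialize this to $\tau=2$ and then invoke Robbins's theorem.

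First I would observe that $P_1^2(G)\cap\mathbb{Z}^{E^+\cup E^-}$ is either empty or contains exactly one vector, namely the all-ones vector. Indeed, the constraints $x_{e^+}\geq 1$, $x_{e^-}\geq 1$ and $x_{e^+}+x_{e^-}=2$ force $x_{e^+}=x_{e^-}=1$ for each $e\in E$. Moreover, the cut constraint $x(\delta^+(U))\geq 2$ then simplifies to $|\delta_G(U)|\geq 2$ for every nonempty proper $U\subsetneqq V$, i.e.\ $G$ is $2$-edge-connected. So the task reduces to the following claim: for every $2$-edge-connected graph $G$, the all-ones vector on $E^+\cup E^-$ can be written as $\chi_{O_1}+\chi_{O_2}$ where $O_1,O_2$ are SCO's.

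Next I would note the structural consequence of such a decomposition: since $\chi_{O_1}(e^+)+\chi_{O_2}(e^+)=1$ and $\chi_{O_1}(e^-)+\chi_{O_2}(e^-)=1$ for each edge $e$, and $O_1,O_2$ are orientations (i.e.\ each selects exactly one of $e^+,e^-$), we must have $O_2=O_1^{-1}$. Thus what is required is a strongly connected orientation $O_1$ of $G$ whose reverse $O_1^{-1}$ is also strongly connected. The reverse orientation is automatically strongly connected whenever $O_1$ is, since reversing every arc on a directed $u$-$v$ walk yields a directed $v$-$u$ walk in $O_1^{-1}$. So it suffices to produce a single strongly connected orientation of $G$.

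Finally I would apply Robbins's theorem (1939), which states exactly that an undirected graph admits a strongly connected orientation if and only if it is $2$-edge-connected; this supplies the desired $O_1$. Combining these steps: given any digraph $D$ with minimum dicut size $\tau=2$, its underlying undirected graph $G$ is $2$-edge-connected (as noted in the introduction), so Robbins's theorem provides $O_1$, and then $O_1,O_1^{-1}$ is a decomposition of the unique nowhere-zero $2$-SCO into two SCO's, which by the proof of Theorem~\ref{thm:eq_Woodall} translates into two disjoint dijoins in $D$. There is no serious obstacle here—the one point to verify carefully is that the ``if'' direction of Theorem~\ref{thm:eq_Woodall} is valid when restricted to a single value of $\tau$, which is exactly the refinement the paper records just after that theorem's proof.
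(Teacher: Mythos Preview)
Your proof is correct and follows essentially the same route as the paper: invoke the $\tau$-specific equivalence from Theorem~\ref{thm:eq_Woodall}, observe that the only candidate vector in $P_1^2(G)$ is the all-ones vector (forcing $G$ to be $2$-edge-connected), and then apply Robbins's theorem to write $\mathbf{1}=\chi_O+\chi_{O^{-1}}$. You include a bit more detail (why $O_2=O_1^{-1}$ and why the reverse of an SCO is an SCO), but the argument is identical.
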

\begin{proof}
    As we remarked earlier about Theorem \ref{thm:eq_Woodall} for a specific $\tau$, Woodall's conjecture is true for every digraph with minimum dicut size $\tau=2$ if and only if for every undirected graph $G$, every vector $x\in P_1^2$ can be decomposed into $2$ strongly connected orientations. However, the all one vector $\mathbf{1}$ is the only possible vector in $P_1^2$. Clearly, $\mathbf{1}\in P_1^2$ if and only if $G$ is $2$-edge-connected. By the classical result of Robbins~\cite{robbins1939theorem}, every $2$-edge-connected graph has a strongly connected orientation $O$. Therefore, $\mathbf{1}=\chi_O+\chi_{O^{-1}}$ is a valid decomposition.  
\end{proof}

 To prove Theorem \ref{thm:eq_E-G}, we need a structural lemma.    
    \begin{lemma}\label{lemma:near_dicut}
        Let $D=(V,A)$ be a digraph with weight $w\in\{0,1\}^A$ and assume that the minimum weight of a dicut is $\tau\geq 2$. Let $e\in A$ be an arc such that $w_e=1$. If there exists a cut $\delta_D(U)$ such that $\delta_D^+(U)=\{e\}$ and $w(\delta_D^-(U))=0$, then $e$ is not contained in any minimum dicut of $D$.
    \end{lemma}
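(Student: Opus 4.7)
The plan is to argue by contradiction: suppose $e$ lies in some minimum dicut $\delta_D^+(W)$, so $\delta_D^-(W)=\emptyset$ and $w(\delta_D^+(W))=\tau$. Writing $e=(x,y)$, the conditions $e\in\delta^+(U)\cap\delta^+(W)$ force $x\in U\cap W$ and $y\notin U\cup W$, partitioning $V$ into the four regions $U\cap W$, $U\setminus W$, $W\setminus U$, and $V\setminus(U\cup W)$. The goal will be to exhibit a dicut of weight strictly less than $\tau$, contradicting minimality.

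The key structural claim, using both $\delta^+(U)=\{e\}$ and $\delta^-(W)=\emptyset$, is that no arc leaves $U\setminus W$: such an arc would either enter $W$ (forbidden by $\delta^-(W)=\emptyset$) or leave $U$ and thus equal $e$, but $e$ starts in $U\cap W$. Symmetrically, no arc enters $W\setminus U$. Taking complements, whenever $U\setminus W\neq\emptyset$ the set $\delta^-(U\setminus W)$ is itself a dicut, and whenever $W\setminus U\neq\emptyset$ the set $\delta^+(W\setminus U)$ is a dicut. I would then split into cases. If $U\setminus W\neq\emptyset$, the arcs of $\delta^-(U\setminus W)$ decompose into arcs from $U\cap W$ (contained in $\delta^+(W)\setminus\{e\}$, since $e$ ends outside $U$) and arcs from $W\setminus U$ or from $V\setminus(U\cup W)$, which all lie in $\delta^-(U)$ and hence carry weight $0$; this bounds the dicut weight by $w(\delta^+(W))-w_e=\tau-1$. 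If instead $U\subseteq W$ but $W\setminus U\neq\emptyset$, a symmetric bookkeeping for $\delta^+(W\setminus U)$ (arcs into $U\cap W$ enter $U$ and hence have weight $0$, while arcs into $V\setminus(U\cup W)$ lie in $\delta^+(W)\setminus\{e\}$) again yields weight at most $\tau-1$. The degenerate case $U=W$ would make $\{e\}$ itself a dicut of weight $1<\tau$, again a contradiction.

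The main subtlety, and what rules out the most direct approach, is that $w(\delta^-(U))=0$ does not mean $\delta^-(U)$ is empty, so the usual submodular substitution with $U\cap W$ and $U\cup W$ fails: weight-$0$ arcs in $\delta^-(U)$ may prevent $\delta^-(U\cap W)$ or $\delta^-(U\cup W)$ from being empty, so these two sets need not induce honest dicuts at all. Working with $U\setminus W$ and $W\setminus U$ instead resolves this, because the combined hypotheses $\delta^+(U)=\{e\}$ and $\delta^-(W)=\emptyset$ eliminate the relevant arcs as sets rather than merely in weight, making the resulting cuts genuine dicuts to which the bound $\tau$ can be applied.
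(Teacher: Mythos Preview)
Your proof is correct and, despite appearances, coincides with the paper's argument. The paper writes the minimum dicut as $\delta_D^-(W)$ with $\delta_D^+(W)=\emptyset$, i.e.\ its $W$ is the complement of yours; under that translation your sets $U\setminus W$ and $W\setminus U$ become the paper's $U\cap W$ and $V\setminus(U\cup W)$, so the two candidate dicuts you produce are exactly the paper's $\delta^-(U\cap W)$ and $\delta^-(U\cup W)$. The paper reaches them by first deleting $e$ (so that $\delta^+(U)$ becomes empty as a set, not merely in weight), invoking the standard fact that intersections and unions of dicut-sides are again dicut-sides, noting that $e$ lies in neither resulting cut, and then using the submodular identity to bound the \emph{sum} of their weights by $\tau-1$. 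You instead verify directly that no arc leaves $U\setminus W$ and no arc enters $W\setminus U$, and bound each weight separately by $\tau-1$. Your final remark that the $U\cap W$, $U\cup W$ substitution ``fails'' is thus only an artifact of which side of the dicut you call $W$: with the paper's convention the standard submodular route works, and the deletion of $e$ is precisely the device that turns the weight-zero hypothesis on $\delta^-(U)$ into a set-level emptiness of $\delta^+(U)$.
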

    \begin{proof}
        Suppose not. Then there exists a dicut $\delta_D^-(W)$ such that $w(\delta_D^-(W))=\tau$ and $e\in \delta_D^-(W)$. Let $D'$ be obtained from $D$ by deleting $e$. Then $\delta_{D'}^-(U)$ becomes a dicut of $D'$. Therefore, $\delta_{D'}^-(U\cap W)$ and $\delta_{D'}^-(U\cup W)$ are both dicuts of $D'$. However, since $e$ leaves $U$ and enters $W$, $e$ goes from $U\setminus W$ to $W\setminus U$. Thus, $e\notin \delta_D(U\cap W)$ and $e\notin \delta_D(U\cup W)$. Therefore, both $\delta_{D}^-(U\cap W)$ and $\delta_{D}^-(U\cup W)$ are dicuts of $D$. Moreover, $w(\delta_{D}^-(U\cap W))+w(\delta_{D}^-(U\cup W))=w(\delta_{D}^-(U))+w(\delta_{D}^-(W))-1=\tau-1$.
        It follows that $w(\delta_{D}^-(U\cap W))\leq \tau-1$ and $w(\delta_{D}^-(U\cup W))\leq \tau-1$. Notice that either $U\cap W\neq \emptyset$ or $U\cup W\neq V$: otherwise, $e$ is a bridge of $D$, contradicting $\tau\geq 2$. Therefore, either $\delta_{D}^-(U\cap W)$ or $\delta_{D}^-(U\cup W)$ violates the assumption that the size of a minimum dicut is $\tau$, a contradiction.
    \end{proof}

\begin{proof}[Proof of Theorem \ref{thm:eq_E-G}]
    We modify the proof of Theorem \ref{thm:eq_Woodall} to prove Theorem \ref{thm:eq_E-G}. We first prove the ``if'' direction.
    Let $D=(V,A)$ be a digraph with weight $w\in\{0,1\}^A$ and minimum dicut $\tau\geq 2$. We can assume there is no arc $e\in A$ with weight $1$ such that there exists a cut $\delta_D(U)$ such that $\delta_D^+(U)=\{e\}$ and $w(\delta_D^-(U))=0$. For otherwise, by Lemma \ref{lemma:near_dicut}, $e$ is not contained in any minimum dicut, which means we can set the weight of $e$ to be $0$ without decreasing the size of a minimum dicut. Every packing of $\tau$ dijoins in the new graph will be a valid packing of $\tau$ dijoins in $D$ with weight $w$. 
    
    Let $G$ be the underlying undirected graph of $D$ and $\vec{G}=(V,E^+\cup E^-)$ be defined as before such that $E^+=A$ and $E^-=A^{-1}$. Define $x\in\Z^{E^+\cup E^-}$ as follows. For weight $1$ arcs $e^+\in A$, we define $x_{e^+}=\tau-1$ and $x_{e^-}=1$ as before. For the weight $0$ arcs $e^+\in A$, we define $x_{e^+}=\tau$ and $x_{e^-}=0$. To argue that $x(\delta^+_{\vec{G}}(U))\geq \tau$ for every $\emptyset \neq U\subsetneqq V$, if $\delta_D^-(U)$ is a dicut we follow the same approach as in the proof of Theorem \ref{thm:eq_Woodall}. Therefore, without loss of generality, we assume there exists at least one arc $e^+\in E^+$ in $\delta_{D}^+(U)$. If there exists such an arc with $w(e^+)=0$, then $x(\delta_{\vec{G}}^+(U))\geq x_{e^+}\geq \tau$. Otherwise, all the arcs in $\delta_D^+(U)$ have weight $1$. If there exist at least $2$ arcs of weight $1$ in $\delta_{D}(U)$, we follow the same argument as in the earlier proof. The only case left is when $\delta_{D}^+(U)$ is a single arc of weight $1$ and all the arc in $\delta_D^-(U)$ has weight $0$, which has been excluded in the beginning. Therefore, we have proved $x(\delta^+_{\vec{G}}(U))\geq \tau$ for every $\emptyset \neq U\subsetneqq V$, which implies $x\in P_0^\tau(G)$. By the assumption, $x=\sum_{i=1}^{\tau}\chi_{O_i}$ where each $O_i$ is a strongly connected orientation. We define the dijoins in the same way as the other proof. Note that the dijoins are disjoint and never use weight $0$ arcs. Therefore, we find $\tau$ dijoins that form a valid packing of graph $D$ with weight $w$.

    Next, we prove the ``only if'' direction. Given an undirected graph $G=(V,E)$, the corresponding $\vec{G}=(V,E^+\cup E^-)$, and an integral $x\in P_0^\tau(G)$, we construct weighted digraph $D$ as follows. For an edge $e=\{u,v\}$ such that $x_{e^+}, x_{e^-}\geq 1$, we construct node $w_e$ and arcs $uw_e$, $vw_e$ in the same way as in the proof of Theorem \ref{thm:eq_Woodall}. For an arc $e^+=(u,v)$ such that $x_{e^+}=\tau$, $x_{e^-}=0$, we add node $w_e$, add $\tau$ arcs of weight $1$ from $u$ to $w_e$ and add a weight $0$ arc from $v$ to $w_e$. Similarly, for $e^+=(u,v)$ with $x_{e^+}=0$, $x_{e^-}=\tau$, we add node $w_e$, add a weight $0$ arc from $u$ to $w_e$ and $\tau$ arcs of weight $1$ from $v$ to $w_e$. The same argument applies to see that the minimum size of a dicut in $D$ is $\tau$. By the Edmonds-Giles conjecture, we can find $\tau$ disjoint dijoins in the weighted digraph $D$. As before, we can find $\tau$ strongly connected orientations accordingly that sum up to $x$.
\end{proof}


    


\section{Strongly Connected Digraphs and Decomposition}\label{sec:Decompose_SCD}
In this section, we will revisit strongly connected digraphs and study their polytopes. We will see the difference between the polytopes of strongly connected orientations and strongly connected digraphs. We also give a decomposition result of weighted strongly connected digraphs whose weights are ``nowhere-zero". 
\subsection{Strongly Connected Digraph Polytopes}
Given an undirected graph $G=(V,E)$ and $\vec{G}=(V,E^+\cup E^-)$ obtained by copying and oppositely orienting each edge of $G$, let $SCD(\vec{G})$ be the polytope of strongly connected (sub)digraphs of $G$. Let $\chi_F$ be the characteristic vector of a strongly connected subgraph $F$ of $\vec{G}$.
     \begin{equation}
    \begin{aligned}
        SCD(\vec{G}):=\big\{x\in\{0,1\}^{E^+\cup E^-}\big|&x=\chi_F \text{ for some }  \text{strongly connected subgraph $F$ of $G$}\big\}.
    \end{aligned}
    \end{equation}
It coincides with the $0,1$ vectors in the following polyhedron:
\[
\begin{aligned}
    \big\{x\in\mathbb{R}^{E^+\cup E^-}\big|~&x_{e^+}\geq 0,~ x_{e^-}\geq 0, ~\forall e\in E\\
        &x(\delta^+(U))\geq 1, \forall \emptyset \neq U\subsetneqq V\big\}.
\end{aligned}
\]

More generally, for a positive integer $\tau$, a $\tau$-strongly-connected (sub)digraph ($\tau$-SCD) of $G$ is an integral vector in
    \begin{equation}
    \begin{aligned}
        Q_0^\tau(G):=\big\{x\in\mathbb{R}^{E^+\cup E^-}\big|~&x_{e^+}\geq 0,~ x_{e^-}\geq 0, ~\forall e\in E\\
        &x(\delta^+(U))\geq \tau,~ \forall \emptyset \neq U\subsetneqq V\big\}
    \end{aligned}
    \end{equation}
In contrast to $P_0^1(G)$ (defined in \eqref{eq:def_P0^tau}) the natural LP relaxation for $SCO(G)$ which is integral, $Q_0^1(G)$ is not integral (e.g. $Q_0^1(G)$ for the complete graph $K_4$ has non-integral extreme points such as the one in Figure \ref{fig:scd_half_integral}.).
\begin{figure}[htbp]
	\centering
	\includegraphics[scale=0.4]{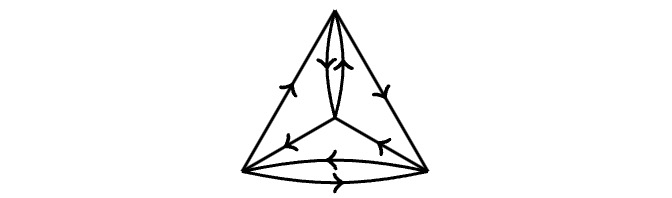}
 \caption{The value $\frac{1}{2}$ on every arc drawn and $0$ everywhere else is a half-integral extreme point of $Q_0^1(K_4)$}.
 \label{fig:scd_half_integral}
\end{figure}

    A nowhere-zero $\tau$-strongly-connected (sub)digraph (nowhere-zero $\tau$-SCD) of $G$ is an integral vector in
    \begin{equation}\label{eq:def_Q_1^k}
    \begin{aligned}
        Q_1^\tau(G):=\big\{x\in\mathbb{R}^{E^+\cup E^-}\big|~&x_{e^+}\geq 1,~ x_{e^-}\geq 1, ~\forall e\in E\\
        &x(\delta^+(U))\geq \tau,~ \forall \emptyset \neq U\subsetneqq V\big\}.
    \end{aligned}
    \end{equation}
Note that $Q_0^\tau(G)$ (resp. $Q_1^\tau(G)$) can be obtained from $P_0^\tau(G)$ (resp. $P_1^\tau(G)$) by relaxing the constraints $x_{e^+}+x_{e^-}=\tau, \forall e\in E$. In particular, in a strongly connected digraph, for each edge $e\in E$, we can use both $e^+$ and $e^-$ or neither of them, while in a strongly connected orientation we need to use exactly one of them. 


\begin{figure}[htbp]
	\centering
	\includegraphics[scale=0.2]{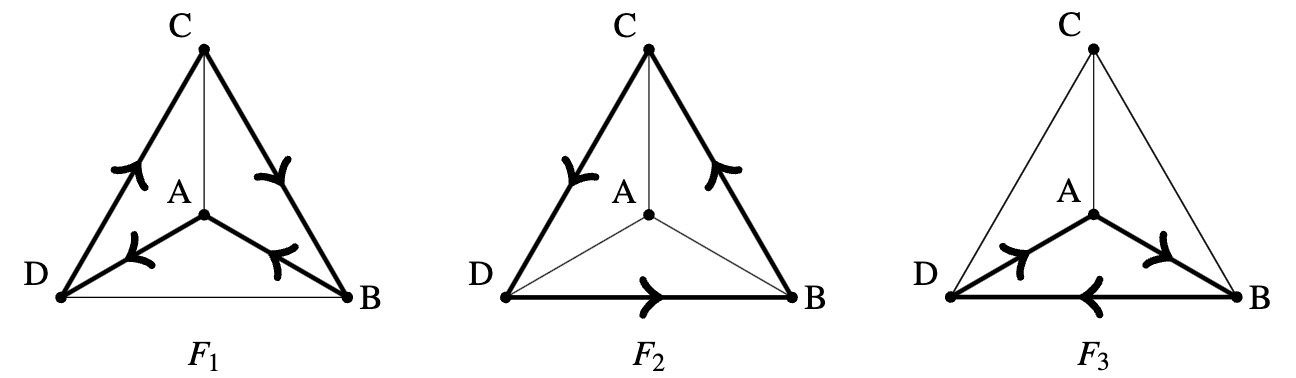}
 \caption{$\mathbf{1}\in Q_1^3(K_4)$ cannot be decomposed into $3$ strongly connected digraphs.}
 \label{fig:scd}
\end{figure}

\paragraph{Remark.} In general, it is not true that an arbitrary integral vector in $Q_1^\tau(G)$ can be decomposed into $\tau$ disjoint strongly connected digraphs. A counterexample is given in Figure \ref{fig:scd}. The underlying undirected graph $G$ is the complete graph $K_4$. Let the weight $w$ be the all one vector $\mathbf{1}$. It satisfies $w(\delta_{\vec{G}}(U))\geq 3, \forall \emptyset \neq U\subsetneqq V$ and thus $w\in Q_1^3(K_4)$. Assume that $w$ can be decomposed into $3$ strongly connected digraphs $F_1,F_2,F_3$. There are $3$ arc-disjoint directed paths from $D$ to $B$: $D\rightarrow B$, $D\rightarrow A\rightarrow B$ and $D\rightarrow C\rightarrow B$, so each strongly connected digraph should use exactly one of them. So should the $3$ arc-disjoint directed paths from $B$ to $D$. If $D\rightarrow B$ and $B\rightarrow D$ are used in the same strongly connected digraph $F_1$, then $F_2$ and $F_3$ will use all the arcs connecting $\{A,C\}$ to $\{B,D\}$ which leaves $\{A,C\}$ disconnected to $\{B,D\}$ in $F_1$, a contradiction. Thus, exactly one of them, say $F_1$, uses length $2$ paths in both directions, and the other two, $F_2,F_3$, use length $1$ path in one direction and length $2$ path in the other (see, e.g. Figure \ref{fig:scd}). Then there will be a directed cycle of length $3$ in each of $F_2$ and $F_3$. The only arcs left are $A\rightarrow C$ and $C\rightarrow A$. Both $F_2$ and $F_3$ need both arcs to strongly connect the remaining isolated node to the directed cycle. This is impossible.

\subsection{Decomposing Nowhere-Zero $\tau$-SCD's}
The proof of Theorem \ref{thm:decompose_SCD_weightD} relies heavily on the special structure of the weight $w^D$ associated with $D$. It does not generalize to decomposing an arbitrary $\tau$-SCD into two $\rounddown{\frac{\tau}{k}}$-SCD's. In fact, it is even open whether there exists a sufficiently large integer $\tau$ such that every $\tau$-SCD can be decomposed into two disjoint SCD's \cite{bang2004decomposing}.   However, we provide a counterpart of Theorem~\ref{thm:decompose_SCD_weightD} about decomposing an arbitrary nowhere-zero $\tau$-SCD's.

\begin{theorem}\label{thm:decompose_SCD}
    For an undirected graph $G=(V,E)$, if it admits a nowhere-zero circular $k$-flow for some rational number $k\geq 2$, then every nowhere-zero $\tau$-SCD of $\vec{G}$ can pack two $\rounddown{\frac{\tau}{k+1}}$-SCD's.
\end{theorem}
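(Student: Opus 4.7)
The plan is to adapt the rounding construction used in the proof of Theorem~\ref{thm:decompose_SCD_weightD}, but now applied to an arbitrary nowhere-zero $\tau$-SCD instead of the specific weight $w^D$. By Lemma~\ref{lem:key}, the hypothesized nowhere-zero circular $k$-flow yields a $k$-cut-balanced orientation $E^+$ of $G$; its reverse $E^- = (E^+)^{-1}$ is then also $k$-cut-balanced. Given a nowhere-zero $\tau$-SCD $w \in \Z^{E^+ \cup E^-}$ (so $w(a) \geq 1$ on every arc and $w(\delta^+_{\vec{G}}(U)) \geq \tau$ for every $\emptyset \neq U \subsetneq V$), define
\[
x_1(a) = \begin{cases}\lceil w(a)/2 \rceil, & a \in E^+,\\ \lfloor w(a)/2 \rfloor, & a \in E^-,\end{cases} \qquad x_2 = w - x_1.
\]
Both $x_1, x_2 \in \Z_+^{E^+ \cup E^-}$ and satisfy $x_1 + x_2 = w$; the task is to verify each is a $\rounddown{\tau/(k+1)}$-SCD.

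Fix $\emptyset \neq U \subsetneq V$ and write $d := |\delta_G(U)|$ and $W := w(\delta^+_{\vec{G}}(U)) \geq \tau$. I will establish two complementary lower bounds on $x_1(\delta^+_{\vec{G}}(U))$. First, the nowhere-zero hypothesis gives $\lceil w(a)/2 \rceil \geq 1$ on every arc, so
\[
x_1(\delta^+_{\vec{G}}(U)) \;\geq\; |\delta^+_{E^+}(U)| \;\geq\; d/k,
\]
using $k$-cut-balance of $E^+$. Second, from $\lceil t/2\rceil \geq t/2$ and $\lfloor t/2\rfloor \geq (t-1)/2$ for integers $t$, combined with $k$-cut-balance of $E^-$,
\[
x_1(\delta^+_{\vec{G}}(U)) \;\geq\; \frac{W}{2} - \frac{|\delta^+_{E^-}(U)|}{2} \;\geq\; \frac{W}{2} - \frac{(k-1)d}{2k}.
\]
The two right-hand sides coincide at $d = kW/(k+1)$, where they both equal $W/(k+1)$. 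A case split on whether $d \geq kW/(k+1)$ (the counting bound then dominates) or $d < kW/(k+1)$ (the rounding bound then dominates) yields $x_1(\delta^+_{\vec{G}}(U)) \geq W/(k+1) \geq \tau/(k+1)$ in either case. Since $x_2$ arises from the same construction with the roles of $E^+$ and $E^-$ exchanged, the identical analysis gives $x_2(\delta^+_{\vec{G}}(U)) \geq \tau/(k+1)$. Integrality then upgrades both bounds to $\rounddown{\tau/(k+1)}$.

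The main technical point is arranging for the two lower bounds to balance exactly at $W/(k+1)$, which is what produces the denominator $k+1$ in place of the $k$ obtained in Theorem~\ref{thm:decompose_SCD_weightD}. This balance hinges on the nowhere-zero hypothesis $w(a) \geq 1$ feeding into the counting bound; without it, the counting bound collapses on cuts whose $E^+$-arcs carry weight $0$, and the special structure of the weight $w^D$ that saved the argument in Theorem~\ref{thm:decompose_SCD_weightD} has no analogue in the general $\tau$-SCD setting.
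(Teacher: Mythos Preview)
Your proof is correct and matches the paper's argument essentially line for line: the same rounding $x_e=\lceil w_e/2\rceil$ on $E^+$ and $\lfloor w_e/2\rfloor$ on $E^-$, the same two lower bounds (a counting bound via $k$-cut-balance of $E^+$ and a rounding bound via $k$-cut-balance of $E^-$), and the same case split. The only cosmetic difference is that you split on $d$ against $kW/(k+1)$ while the paper splits on $d$ against $k\tau/(k+1)$; your version yields the marginally sharper intermediate inequality $x_1(\delta^+_{\vec G}(U))\geq W/(k+1)$ before invoking $W\geq\tau$, but the argument and the final bound are identical.
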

    \begin{proof}
        Let $E^+$ and $f:E^+\rightarrow [1,k-1]$ be a nowhere-zero circular $k$-flow of $G$. Let $E^-$ be obtained by reversing the arcs of $E^+$. Let $\vec{G}=(V,E^+\cup E^-)$. By Lemma \ref{lem:key}, both $E^+$ and $E^-$ are $k$-cut-balanced orientations. Let $w\in Q_1^\tau(G)$ be an integral nowhere-zero $\tau$-SCD. We construct $x\in \Z^{E^+\cup E^-}$ in the following way:
        \begin{equation*}
            \begin{aligned}
                x_e=\begin{cases}
                    \roundup{\frac{w_e}{2}}, & e\in E^+\\
                    \rounddown{\frac{w_e}{2}}, & e\in E^-
                \end{cases}
            \end{aligned},
            \quad\text{and equivalently}\quad 
            \begin{aligned}
                (w-x)_e=\begin{cases}
                    \rounddown{\frac{w_e}{2}}, & e\in E^+\\
                    \roundup{\frac{w_e}{2}}, & e\in E^-
                \end{cases}.
            \end{aligned}
        \end{equation*}
    We claim that $x$ and $(w-x)$ are both $\rounddown{\frac{\tau}{k+1}}$-SCD's. Let $\emptyset \neq U \subsetneqq V$. We discuss two cases. 
        
        Suppose $|\delta_G(U)|\geq \frac{k}{k+1}\tau$. Then, the weights $x_e$ of all arcs $e\in\delta_{E^+}^+(U)$ will be rounded up to $\roundup{\frac{w_e}{2}}\geq \roundup{\frac{1}{2}}=1$. Therefore, $x(\delta_{\vec{G}}^+(U))\geq x(\delta_{E^+}^+(U)) \geq |\delta_{E^+}^+(U)|\geq \frac{1}{k}|\delta_{G}(U)|\geq \frac{\tau}{k+1}$.
        Symmetrically, the weights $(w-x)_e$ of all arcs $e\in\delta_{E^-}^+(U)$ will be rounded up to $\roundup{\frac{w_e}{2}}\geq \roundup{\frac{1}{2}}=1$. Therefore, $(w-x)(\delta_{\vec{G}}^+(U))\geq (w-x)(\delta_{E^-}^+(U))\geq  |\delta_{E^-}^+(U)|\geq \frac{1}{k}|\delta_{G}(U)|\geq \frac{\tau}{k+1}$. 
        
        Suppose $|\delta_G(U)|<\frac{k}{k+1}\tau$. Then, \[
        \begin{aligned}
            x(\delta_{\vec{G}}^+(U))=&\roundup{\frac{w}{2}}(\delta_{E^+}^+(U))+\rounddown{\frac{w}{2}}(\delta_{E^-}^+(U))\\
            \geq& \frac{w}{2}(\delta_{E^+}^+(U))+\big(\frac{w-1}{2}\big)(\delta_{E^-}^+(U))\\
            =&\frac{1}{2}w(\delta_{\vec{G}}^+(U))-\frac{1}{2}|\delta_{E^-}^+(U)|\\
        \geq& \frac{1}{2}w(\delta_{\vec{G}}^+(U))-\frac{1}{2}\cdot\frac{k-1}{k}|\delta_G(U)|\\
        \geq& \frac{1}{2}\tau-\frac{1}{2}\cdot\frac{k-1}{k}\cdot\frac{k}{k+1}\tau\\
        =&\frac{\tau}{k+1},
        \end{aligned}
        \]
        where the second inequality follows from Lemma \ref{lem:key} and the third inequality uses the fact that $w\in Q_1^\tau(G)$. This shows that $x$ is a $\rounddown{\frac{\tau}{k+1}}$-SCD.
        By interchanging $E^+$ and $E^-$, a symmetrical argument applies to prove that $(w-x)$ is also a $\rounddown{\frac{\tau}{k+1}}$-SCD.
\end{proof}


The above theorem along with the technique of pairing in and out $r$-arborescences gives the following result.

\begin{theorem}\label{thm:pack_SCD1}
    For an undirected graph $G=(V,E)$, if it admits a nowhere-zero circular $k$-flow for some rational number $k\geq 2$, then every nowhere-zero $\tau$-SCD of $\vec{G}$ can pack $\rounddown{\frac{\tau}{k+1}}$ SCD's.
\end{theorem}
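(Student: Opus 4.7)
The plan is to combine Theorem \ref{thm:decompose_SCD} with the in/out $r$-arborescence pairing technique used in the proof of Theorem \ref{thm:pack_dijoins}. Concretely, given a nowhere-zero $\tau$-SCD weighting $w$ of $\vec{G}$, I would first invoke Theorem \ref{thm:decompose_SCD} to split $w$ into two integral weights $x$ and $w-x$, each of which is a $\rounddown{\frac{\tau}{k+1}}$-SCD of $\vec{G}$.

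Next, I would fix an arbitrary root $r \in V$. Since any $\rounddown{\frac{\tau}{k+1}}$-SCD is in particular rooted-$\rounddown{\frac{\tau}{k+1}}$-connected, Corollary \ref{cor:edmonds} applied to the weighted digraph $x$ yields $\rounddown{\frac{\tau}{k+1}}$ arc-disjoint out $r$-arborescences $S_1, \ldots, S_{\rounddown{\tau/(k+1)}}$ that collectively respect the weights $x$. Applying the corollary to $w-x$ (by reversing arcs and using the in-arborescence version), I extract $\rounddown{\frac{\tau}{k+1}}$ arc-disjoint in $r$-arborescences $T_1, \ldots, T_{\rounddown{\tau/(k+1)}}$ respecting the weights $w-x$.

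I would then form $F_i := S_i \cup T_i$ for each $i$. Each $F_i$ is strongly connected: for any $\emptyset \neq U \subsetneqq V$, if $r \in U$ then $\delta^+_{\vec{G}}(U)$ is an $r$-out-cut that must be crossed by the out-arborescence $S_i$, and if $r \notin U$ then $\delta^+_{\vec{G}}(U)$ is an $r$-in-cut that must be crossed by the in-arborescence $T_i$. Since the $S_i$'s are pairwise disjoint with combined load at most $x$ and the $T_i$'s are pairwise disjoint with combined load at most $w - x$, the total load of $F_1, \ldots, F_{\rounddown{\tau/(k+1)}}$ on each arc is at most $w$, giving the desired packing of SCD's.

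There is no real obstacle here beyond correctly invoking the two previously-established results: the decomposition provided by Theorem \ref{thm:decompose_SCD} does all the heavy lifting (that is where the nowhere-zero $k$-flow hypothesis is used, via the cut-balanced orientation), and Edmonds' arborescence theorem handles the extraction of disjoint arborescences from a weighted digraph satisfying the appropriate rooted connectivity bound. The only minor care needed is to verify that Edmonds' theorem still applies when the digraph is given by nonnegative integer arc weights rather than a simple multiset, which is standard (replace each arc of weight $w_e$ by $w_e$ parallel arcs).
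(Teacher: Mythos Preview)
Your proposal is correct and follows exactly the approach the paper intends: the paper itself does not spell out a proof but simply states that Theorem~\ref{thm:pack_SCD1} follows from Theorem~\ref{thm:decompose_SCD} together with the in/out $r$-arborescence pairing technique from the proof of Theorem~\ref{thm:pack_dijoins}, which is precisely what you do.
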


\section{Conclusions and Discussions}
We showed that every digraph with minimum dicut size $\tau$ can pack $\rounddown{\frac{\tau}{6}}$ dijoins, or $\rounddown{\frac{\tau p}{2p+1}}$ dijoins when the digraph is $6p$-edge-connected. 
The existence of nowhere-zero circular $k$-flow for a smaller $k$ ($<6$) when special structures are imposed on the underlying undirected graphs would lead to a better ratio, i.e., $\rounddown{\frac{\tau}{k}}$, approximate packing of dijoins for those digraphs. 
The limitation of this approach is that we cannot hope that nowhere-zero $2$-flows always exist because this is equivalent to the graph being Eulerian. Thus, bringing the number up to $\rounddown{\frac{\tau}{2}}$ disjoint dijoins would be challenging using this approach. However, it is necessary for Woodall's conjecture to be true that every digraph with minimum dicut size $\tau$ contains two disjoint $\rounddown{\frac{\tau}{2}}$-dijoins. Therefore, new ideas are needed to prove or disprove whether such a decomposition exists.

The careful reader may have noticed that the approach only works for the unweighted case. However, by a slight modification of the argument, it extends to the weighted case when the underlying undirected graph of the weight-$1$ arcs is bridgeless. In this case, we can find a nowhere-zero $k$-flow on the weight-$1$ arcs and construct the decomposition of weight-$1$ arcs the same way as in Theorem \ref{thm:decompose_SCD_weightD}. However, unfortunately, in general, the underlying graph of weight-$1$ arcs may have bridges, in which case the above argument does not work. 
A proper analogue of nowhere-zero flows in mixed graphs should be developed, which motivates us to raise the following open problems.
    
Given a mixed graph $M=(V,E\cup A)$, where $A$ is directed and $E$ is undirected, a \textit{pseudo dicut} is an edge subset of the form $\delta_E(U)$ such that $\delta^-_A(U)=\emptyset$. Let $\tau$ be the minimum size of a pseudo dicut.
\begin{conjecture}\label{con:weighted_strong}
There is a constant $k>2$ such that for every mixed graph $M=(V,E\cup A)$, there always exists an orientation $E^+$ of the edges $E$ such that 
$|\delta_{E^+}^+(U)|,|\delta_{E^+}^-(U)|\geq \rounddown{\frac{1}{k}|\delta_E(U)|}$ for every pseudo dicut $\delta_E(U)$.
\end{conjecture}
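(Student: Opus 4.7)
The conjecture is a mixed-graph analogue of the existence of $k$-cut-balanced orientations (Lemma~\ref{lem:key}), where the balance requirement is imposed only on pseudo dicuts. The plan is to establish the conjecture via the submodular flow framework of Theorem~\ref{thm:submodular-flow}, exploiting the lattice structure of pseudo-dicut sets.

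The first step is to observe that the family $\mathcal{F} := \{\emptyset \neq U \subsetneq V : \delta_A^-(U) = \emptyset\}$ of pseudo-dicut sets is closed under intersection and union. If $U_1, U_2 \in \mathcal{F}$ with $U_1 \cap U_2 \neq \emptyset$, an arc $(x,y) \in A$ entering $U_1 \cap U_2$ would satisfy $y \in U_i$ but $x \notin U_i$ for some $i$, contradicting $\delta_A^-(U_i) = \emptyset$; the union case is symmetric. Consequently, since $|\delta_E(\cdot)|$ is fully submodular on $2^V$, it is crossing submodular on $\mathcal{F}$.

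Next, I would reformulate the conjecture as an integer feasibility problem. Let $\vec{E}$ denote the bidirected multigraph on $E$ (each edge $e$ giving two arcs $e^+, e^-$) and consider
\begin{equation*}
Q := \left\{x \in [0,1]^{\vec{E}} : x_{e^+} + x_{e^-} = 1\ \forall e,\ \tfrac{|\delta_E(U)|}{k} \leq x(\delta_{\vec{E}}^+(U)) \leq \tfrac{(k-1)|\delta_E(U)|}{k}\ \forall U \in \mathcal{F} \right\}.
\end{equation*}
An integer $x \in Q$ encodes an orientation $E^+$ for which $|\delta_{E^+}^+(U)|, |\delta_{E^+}^-(U)| \geq \lfloor |\delta_E(U)|/k \rfloor$ on every pseudo dicut, since $|\delta_{E^+}^+(U)| = x(\delta_{\vec{E}}^+(U))$ and $|\delta_{E^+}^-(U)| = |\delta_E(U)| - x(\delta_{\vec{E}}^+(U))$. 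The fractional point $x \equiv 1/2$ lies in $Q$ whenever $k \geq 2$, so $Q \neq \emptyset$. Eliminating the pairing equalities via $y_e := x_{e^+}$, the cut constraints become $\alpha_U \leq y(A_U) - y(B_U) \leq \beta_U$ for a partition $A_U \sqcup B_U = \delta_E(U)$ (induced by a fixed reference orientation) with thresholds affine in $|\delta_E(U)|$, which one would then embed into a submodular flow polyhedron as in Theorem~\ref{thm:submodular-flow} to conclude box-integrality of $Q$.

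The main obstacle I anticipate is completing this embedding: while $\mathcal{F}$ and $|\delta_E(\cdot)|$ have the right lattice and submodularity properties, the pairing constraints $x_{e^+} + x_{e^-} = 1$ are not standard flow-conservation equalities, and the signed expression $y(A_U) - y(B_U)$ depends on the reference orientation, which complicates extracting the crossing submodular RHS required by Edmonds--Giles. Should this direct route stall, a fallback is to work on the augmented undirected graph $(V, E \cup \tilde{A})$, where $\tilde{A}$ is $A$ viewed as undirected, apply Theorem~\ref{thm:6-flow} (or Theorem~\ref{thm:6k-connected:circular flow} under higher connectivity) to obtain a $k$-cut-balanced orientation of $E \cup \tilde{A}$, and restrict it to $E$. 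Since on any pseudo dicut $\delta_E(U)$ all arcs of $\tilde{A}$ already cross in one direction in $A$, the imbalance they introduce is one-sided and controlled by $|\delta_A^+(U)|$, so the restriction to $E$ inherits an appropriately scaled cut-balance property, possibly at the cost of enlarging the constant $k$ by a bounded factor.
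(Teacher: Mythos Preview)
This statement is presented in the paper as an \emph{open conjecture}, not a theorem; the paper offers no proof and explicitly lists it as a problem motivated by the limitation of the nowhere-zero-flow approach in the weighted setting. So there is nothing to compare your proposal against---what matters is whether your proposal would actually settle the conjecture. It would not, and both routes you sketch have concrete obstructions.

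For the submodular-flow route, the two-sided balance condition on a pseudo dicut $\delta_E(U)$ unrolls, after the substitution $y_e=x_{e^+}$, into a pair of inequalities indexed by $U\in\mathcal{F}=\{U:\delta_A^-(U)=\emptyset\}$ \emph{and} by $V\setminus U$, i.e.\ by $U\in\mathcal{F}'=\{U:\delta_A^+(U)=\emptyset\}$. To fit Theorem~\ref{thm:submodular-flow} you need $\mathcal{F}\cup\mathcal{F}'$ to be a crossing family, and it is not: take $V=\{1,2,3,4\}$, $A=\{(1,2),(2,3)\}$, $U=\{1,2\}\in\mathcal{F}$, $W=\{2,3\}\in\mathcal{F}'$; then $U\cap W=\{2\}$ has both an entering and a leaving arc, so it lies in neither $\mathcal{F}$ nor $\mathcal{F}'$. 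This is precisely the failure mode you flag as ``the main obstacle'', and it is fatal rather than technical: there is no evident crossing family on which both directions of the constraint live simultaneously with a crossing-submodular right-hand side. (Note also that if this route worked it would work for every integer $k\geq 3$, hence for $k=3$, a statement the authors would surely not have left open.)

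For the fallback, the restriction argument does not go through. A $k$-cut-balanced orientation of $E\cup\tilde A$ balances the \emph{total} cut $|\delta_E(U)|+|\delta_A(U)|$, and you have no control over how the $\tilde A$-edges are oriented. On a pseudo dicut with $|\delta_A^+(U)|$ much larger than $|\delta_E(U)|$, the $\tilde A$-edges alone can supply the required $\tfrac{1}{k}$-fraction in both directions, leaving $|\delta_{E^+}^+(U)|$ free to be zero. The imbalance is indeed one-sided in $A$, but it is \emph{unbounded} relative to $|\delta_E(U)|$, so no enlargement of $k$ rescues the bound $|\delta_{E^+}^+(U)|\geq\lfloor|\delta_E(U)|/k\rfloor$.
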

Note that the counterexample (in Figure~\ref{fig:strongly_connected_orientation}) to Edmonds-Giles conjecture implies $k> 2$. A weaker question is the following.
\begin{conjecture}\label{con:weighted_weak}
    There is a constant $k>2$ such that for every mixed graph $M=(V,E\cup A)$ with minimum pseudo dicut size at least $k$, there always exists an orientation $E^+$ such that $|\delta_{E^+}^+(U)|,|\delta_{E^+}^-(U)|\geq 1$ for every pseudo dicut $\delta_E(U)$.
\end{conjecture}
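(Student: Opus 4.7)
The plan is to reformulate the existence of the desired orientation as the feasibility of a two-sided, box-constrained submodular-flow polytope, then invoke integrality to obtain an integer solution. Fix an arbitrary reference orientation $E^+$ of the undirected edge set $E$ (orient each $e\in E$ in one of the two possible ways). Introduce a variable $y_e\in\{0,1\}$ for each $e\in E$, where $y_e=1$ keeps the reference direction of $e$ and $y_e=0$ reverses it. For any $U\subseteq V$, the number of $E$-edges oriented out of $U$ equals $|\delta_{E^+}^-(U)|+y(\delta_{E^+}^+(U))-y(\delta_{E^+}^-(U))$. Hence the requirement that both directions be present in every pseudo dicut becomes the two-sided inequality
\[
1-|\delta_{E^+}^+(U)|\;\leq\;y(\delta_{E^+}^-(U))-y(\delta_{E^+}^+(U))\;\leq\;|\delta_{E^+}^-(U)|-1
\]
for every $U$ in the family $\mathcal{C}:=\{\emptyset\neq U\subsetneqq V:\delta_A^-(U)=\emptyset\}$ of pseudo dicuts.

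Next, observe that $\mathcal{C}$ is closed under union and intersection --- a pseudo dicut is a union of strongly connected components of $(V,A)$ that is down-closed in the resulting SCC-DAG --- so $\mathcal{C}$ is a lattice family. The right-hand side $f(U):=|\delta_{E^+}^-(U)|-1$ is submodular on $2^V$, the left-hand side $g(U):=1-|\delta_{E^+}^+(U)|$ is supermodular, and $g\leq f$ whenever $|\delta_E(U)|\geq 2$. By the two-sided extension of the Edmonds--Giles submodular-flow theorem (Theorem~\ref{thm:submodular-flow}; see also \cite{schrijver2003combinatorial} Chapter~60 on generalized polymatroids), the polytope
\[
P:=\{y\in[0,1]^E:\,g(U)\leq y(\delta_{E^+}^-(U))-y(\delta_{E^+}^+(U))\leq f(U),\ \forall U\in\mathcal{C}\}
\]
is box-integral. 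The uniform point $y=\tfrac{1}{2}\mathbf{1}$ lies in $P$ as soon as $|\delta_E(U)|\geq 2$ for every pseudo dicut (both inequalities reduce to exactly this condition), so $P$ is nonempty and contains an integer point that encodes the required orientation of $E$. Remarkably, this would establish Conjecture~\ref{con:weighted_weak} even with $k=2$.

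The main obstacle I anticipate is the two-sided integrality step: Theorem~\ref{thm:submodular-flow} as cited only treats one-sided submodular-flow polyhedra. While the two-sided version does hold for lattice families via Frank's generalized-polymatroid theory, one must carefully check that our specific pair $(f,g)$ fits the required framework (e.g.\ that $(f,g)$ defines a g-polymatroid, and that the crossing-submodular / crossing-supermodular assumptions compose with the box constraints $y\in[0,1]^E$). If this route should stall, a constructive fallback would be to start from an arbitrary orientation and iteratively reverse ``augmenting paths'' through deficient pseudo dicuts, in the spirit of Nash-Williams' orientation theorem and Frank's extensions to mixed graphs; this would trade the sharp constant $k=2$ for a larger but still bounded value of $k$.
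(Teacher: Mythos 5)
This statement is one of the open problems the paper poses in its conclusion; the paper gives no proof of it, and in fact remarks immediately after stating it that Schrijver's counterexample (Figure~\ref{fig:strongly_connected_orientation}) forces $k>2$. Your argument, if correct, would prove the statement with $k=2$, so it must contain an error, and the error is exactly where you anticipated it: the integrality step. Theorem~\ref{thm:submodular-flow} applies to one-sided systems $y(\delta_{E^+}^-(U))-y(\delta_{E^+}^+(U))\leq f(U)$ with $f$ crossing submodular over a crossing family. Your two-sided system over $\mathcal{C}=\{U:\delta_A^-(U)=\emptyset\}$ is equivalent to a one-sided system over the enlarged family $\mathcal{C}\cup\{V\setminus U: U\in\mathcal{C}\}$, i.e., sets with no entering $A$-arc together with sets with no leaving $A$-arc. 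That union is not a crossing family (the intersection or union of a ``source'' set and a ``sink'' set is in general neither), the combined bounding function is not crossing submodular on it, and the pair $(g,f)$ is not a compliant/paramodular pair in Frank's sense. So no box-integrality theorem applies, and in fact the polytope $P$ you define is genuinely non-integral.

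A concrete refutation: take Schrijver's weighted digraph \cite{schrijver1980counterexample}, let $A$ consist of all its arcs and let $E$ be undirected copies of the weight-$1$ (solid) arcs. The pseudo dicuts are then exactly the dicuts of the digraph restricted to the solid arcs, and the minimum pseudo dicut size is $2$. Your point $y=\tfrac12\mathbf{1}$ lies in $P$ (this just reproduces the half-integral packing of value $\tau/2$ in the spirit of \cite{shepherd2005visualizing}), but an integral point of $P$ would orient the solid edges so that every dicut contains solid arcs of both orientations, which is precisely a packing of two disjoint dijoins of weight $1$; Schrijver showed none exists. Hence $P$ is nonempty yet contains no integral point, so the claimed box-integrality is false, and the gap between the fractional point $\tfrac12\mathbf{1}$ and an integral orientation is exactly the Edmonds--Giles difficulty, not a technicality. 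Your fallback via augmenting-path reorientations in the style of Nash-Williams/Frank is only a sketch; those orientation theorems handle (local) edge-connectivity demands, not the ``both directions in every pseudo dicut'' condition over the non-crossing family above, and no bounded $k$ is extracted from them. Conjecture~\ref{con:weighted_weak} therefore remains open, and any correct approach must use the hypothesis $k>2$ in an essential way.
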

Again, the counterexample to Edmonds-Giles conjecture (in Figure~\ref{fig:strongly_connected_orientation}) implies $k>2$. An affirmative answer to Conjecture \ref{con:weighted_strong} would immediately lead to a packing of $\rounddown{\frac{\tau}{k}}$ dijoins in the weighted case. An affirmative answer to Conjecture \ref{con:weighted_weak} would immediately lead to a packing of $2$ dijoins in the weighted case when $\tau\geq k$.

\section*{Acknowledgements}
The first author is supported by the US Office of Naval Research under award number N00014-22-1-2528, the second author is supported by a Balas Ph.D. fellowship from the Tepper School, Carnegie Mellon University, and the third author is supported by the US Office of Naval Research under award number N00014-21-1-2243 and the Air Force Office of Scientific Research under award number FA9550-23-1-0031. We would like to thank Ahmad Abdi, Krist\'of B\'erczi, Karthik Chandrasekaran, Bruce Shepherd, Olha Silina and Michael Zlatin for discussions of various aspects of Woodall's conjecture and their useful feedback on this paper. We also thank two anonymous reviewers whose feedback helped improve the presentation of this paper.

\newpage
\bibliographystyle{plain} 
\bibliography{reference}

\begin{thebibliography}{10}

\bibitem{abdi2024arc}
Ahmad Abdi, G{\'e}rard Cornu{\'e}jols, and Giacomo Zambelli.
\newblock Arc connectivity and submodular flows in digraphs.
\newblock {\em Combinatorica}, pages 1--22, 2024.

\bibitem{abdi2023packing}
Ahmad Abdi, G{\'e}rard Cornu{\'e}jols, and Michael Zlatin.
\newblock On packing dijoins in digraphs and weighted digraphs.
\newblock {\em SIAM Journal on Discrete Mathematics}, 37(4):2417--2461, 2023.

\bibitem{bang2009disjoint}
J{\o}rgen Bang-Jensen and Matthias Kriesell.
\newblock Disjoint sub (di) graphs in digraphs.
\newblock {\em Electronic Notes in Discrete Mathematics}, 34:179--183, 2009.

\bibitem{bang2004decomposing}
J{\o}rgen Bang-Jensen and Anders Yeo.
\newblock Decomposing k-arc-strong tournaments into strong spanning
  subdigraphs.
\newblock {\em Combinatorica}, 24:331--349, 2004.

\bibitem{baum1978integer}
Stephen Baum and Leslie~E Trotter.
\newblock Integer rounding and polyhedral decomposition for totally unimodular
  systems.
\newblock In {\em Optimization and Operations Research: Proceedings of a
  Workshop Held at the University of Bonn, October 2--8, 1977}, pages 15--23.
  Springer, 1978.

\bibitem{chudnovsky2016disjoint}
Maria Chudnovsky, Katherine Edwards, Ringi Kim, Alex Scott, and Paul Seymour.
\newblock Disjoint dijoins.
\newblock {\em Journal of Combinatorial Theory, Series B}, 120:18--35, 2016.

\bibitem{CLR}
G\'erard Cornu\'ejols, Siyue Liu, and R.~Ravi.
\newblock Approximately packing dijoins via nowhere-zero flows.
\newblock In {\em IPCO 2024, LNCS 14679}, pages 71--84, 2024.

\bibitem{openproblemtau3}
Matthew Devos.
\newblock Woodall’s conjecture.
\newblock \url{http://www.openproblemgarden.org/op/woodalls_conjecture}.

\bibitem{edmonds1973edge}
Jack Edmonds.
\newblock Edge-disjoint branchings.
\newblock {\em Combinatorial Algorithms}, pages 91--96, 1973.

\bibitem{edmonds1977min}
Jack Edmonds and Rick Giles.
\newblock A min-max relation for submodular functions on graphs.
\newblock In {\em Annals of Discrete Mathematics}, volume~1, pages 185--204.
  Elsevier, 1977.

\bibitem{eswaran1976augmentation}
Kapali~P Eswaran and R~Endre Tarjan.
\newblock Augmentation problems.
\newblock {\em SIAM Journal on Computing}, 5(4):653--665, 1976.

\bibitem{goddyn2001open}
Luis~A Goddyn.
\newblock Some open problems {I} like.
\newblock \url{https://www.sfu.ca/~goddyn/Problems/problems.html}.

\bibitem{goddyn1998k}
Luis~A Goddyn, Michael Tarsi, and Cun-Quan Zhang.
\newblock On (k, d)-colorings and fractional nowhere-zero flows.
\newblock {\em Journal of Graph Theory}, 28(3):155--161, 1998.

\bibitem{hoffman2003some}
Alan~J Hoffman.
\newblock Some recent applications of the theory of linear inequalities to
  extremal combinatorial analysis.
\newblock In {\em Selected Papers Of Alan J Hoffman: With Commentary}, pages
  244--258. World Scientific, 2003.

\bibitem{jaeger1976nowhere}
Fran{\c{c}}ois Jaeger.
\newblock On nowhere-zero flows in multigraphs.
\newblock {\em Proceedings, Fifth British Combinatorial Conference, Aberdeen},
  pages 373--378, 1975.

\bibitem{jaeger1976balanced}
Fran{\c{c}}ois Jaeger.
\newblock Balanced valuations and flows in multigraphs.
\newblock {\em Proceedings of the American Mathematical Society},
  55(1):237--242, 1976.

\bibitem{jaeger1979flows}
Fran{\c{c}}ois Jaeger.
\newblock Flows and generalized coloring theorems in graphs.
\newblock {\em Journal of Combinatorial Theory, series B}, 26(2):205--216,
  1979.

\bibitem{jaeger1984circular}
Fran{\c{c}}ois Jaeger.
\newblock On circular flows in graphs.
\newblock In {\em Finite and Infinite Sets}, pages 391--402. Elsevier, 1984.

\bibitem{jaeger1988nowhere}
Fran{\c{c}}ois Jaeger.
\newblock Nowhere-zero flow problems.
\newblock {\em Selected Topics in Graph Theory}, 3:71--95, 1988.

\bibitem{kiraly2007result}
Tam{\'a}s Kir{\'a}ly.
\newblock A result on crossing families of odd sets.
\newblock 2007.
\newblock \url{https://egres.elte.hu/tr/egres-07-10.pdf}.

\bibitem{lovasz2013nowhere}
L{\'a}szl{\'o}~Mikl{\'o}s Lov{\'a}sz, Carsten Thomassen, Yezhou Wu, and
  Cun-Quan Zhang.
\newblock Nowhere-zero 3-flows and modulo k-orientations.
\newblock {\em Journal of Combinatorial Theory, Series B}, 103(5):587--598,
  2013.

\bibitem{meszaros2015note}
Andr{\'a}s M{\'e}sz{\'a}ros.
\newblock A note on disjoint dijoins.
\newblock {\em Combinatorica}, 38(6):1485--1488, 2018.

\bibitem{nash1964decomposition}
C~St~JA Nash-Williams.
\newblock Decomposition of finite graphs into forests.
\newblock {\em Journal of the London Mathematical Society}, 1(1):12--12, 1964.

\bibitem{robbins1939theorem}
Herbert~Ellis Robbins.
\newblock A theorem on graphs, with an application to a problem of traffic
  control.
\newblock {\em The American Mathematical Monthly}, 46(5):281--283, 1939.

\bibitem{schrijverobervation}
Alexander Schrijver.
\newblock Observations on {Woodall’s} conjecture.
\newblock \url{https://homepages.cwi.nl/~lex/files/woodall.pdf}.

\bibitem{schrijver1980counterexample}
Alexander Schrijver.
\newblock A counterexample to a conjecture of {Edmonds and Giles}.
\newblock {\em Discrete Mathematics}, 32:213--214, 1980.

\bibitem{schrijver1998theory}
Alexander Schrijver.
\newblock {\em Theory of linear and integer programming}.
\newblock John Wiley \& Sons, 1998.

\bibitem{schrijver2003combinatorial}
Alexander Schrijver et~al.
\newblock {\em Combinatorial optimization: polyhedra and efficiency},
  volume~24.
\newblock Springer, 2003.

\bibitem{seymour1981nowhere}
Paul~D Seymour.
\newblock Nowhere-zero 6-flows.
\newblock {\em Journal of Combinatorial Theory, series B}, 30(2):130--135,
  1981.

\bibitem{shepherd2005visualizing}
Bruce Shepherd and Adrian Vetta.
\newblock Visualizing, finding and packing dijoins.
\newblock {\em Graph Theory and Combinatorial Optimization}, pages 219--254,
  2005.

\bibitem{thomassen2012weak}
Carsten Thomassen.
\newblock The weak 3-flow conjecture and the weak circular flow conjecture.
\newblock {\em Journal of Combinatorial Theory, Series B}, 102(2):521--529,
  2012.

\bibitem{tutte1954contribution}
William~Thomas Tutte.
\newblock A contribution to the theory of chromatic polynomials.
\newblock {\em Canadian Journal of Mathematics}, 6:80--91, 1954.

\bibitem{woodall1978menger}
Douglas~R Woodall.
\newblock Menger and {K{\"o}nig} systems.
\newblock {\em Theory and Applications of Graphs: Proceedings, Michigan May
  11--15, 1976}, pages 620--635, 1978.

\bibitem{younger1983integer}
D.H. Younger.
\newblock Integer flows.
\newblock {\em Journal of Graph Theory}, 7(3):349--357, 1983.

\end{thebibliography}





\end{document}